 \newcommand{\xv}{{\bf x}}
\newcommand{\kv}{{\bf k}}
\newcommand{\jv}{{\bf j}}
\newcommand{\phiv}{\boldsymbol{\phi}}
\newcommand{\psiv}{\boldsymbol{\psi}}
\newcommand{\Ej}{\mathbf{E}_{\jv}}
\newcommand{\psij}{\mathbf{\psiv}_{\jv}}
\newcommand{\Psij}{\mathbf{\Psi}_{\jv}}
\newcommand{\phij}{\mathbf{\phiv}_{\jv}}
\newcommand{\Phij}{\mathbf{\Phi}_{\jv}}
\newcommand{\dt}{\Delta t}
\newcommand{\Ejn}{\Ej^n}
\newcommand{\psijn}{\psij^n}
\newcommand{\phijn}{\phij^n}
\newcommand{\coeff}[1]{C_{#1}}
\newcommand{\mystrutS}{\rule[-.15in]{0in}{.375in}}
\newcommand{\mystrutD}{\rule[-.275in]{0in}{.6125in}}
\newcommand{\LapFour}{L_{4h}}
\newcommand{\LapTwo}{L_{2h}}
\newtheorem{myproposition}{Proposition}
  \numberwithin{myproposition}{section}
\title{High-Order Accurate FDTD Schemes for Dispersive Maxwell's Equations in Second-Order Form Using Recursive Convolutions%
\thanks{Preprint.
\funding{This research was supported by a U.S. Presidential Early Career Award for Scientists and Engineers. This work was also supported, in part, through the NSF Research Training Groups program (DMS-1344962) and was partially performed under DOE contracts from the ASCR Applied Math Program. This work was partially funded by the DARPA Defense Sciences Office.}}}
\author{Michael J. Jenkinson% 
			\thanks{Department of Mathematical Sciences, Rensselaer Polytechnic Institute, Troy, NY USA (\email{jenkim2@rpi.edu}, \email{banksj3@rpi.edu})}
			\and
 		  Jeffrey W. Banks\footnotemark[2] %
			}
\begin{document}

\maketitle

\begin{abstract}
We propose a novel finite-difference time-domain (FDTD) scheme for the solution of the Maxwell's equations in which linear dispersive effects are present. The method uses high-order accurate approximations in space and
time for the dispersive Maxwell's equations written as a second-order vector wave equation with a time-history convolution term. The modified equation approach is combined with the recursive convolution (RC) method to develop high-order approximations accurate to any desired order in space and time.
High-order-accurate centered approximations of the physical Maxwell interface conditions are derived for the dispersive setting in order to fully restore accuracy at discontinuous material interfaces. Second- and fourth-order accurate versions of the scheme are presented and implemented in two spatial dimensions for the case of the Drude linear dispersion model. The stability of these schemes is analyzed. Finally, our approach is also amenable to curvilinear numerical grids if used with appropriate generalized Laplace operator.
\end{abstract}

\begin{keywords}
Dispersive Maxwell, FDTD, Recursive Convolution, Wave Equations
\end{keywords}

\begin{AMS}
 65M06, 78M20, 78A40, 35L05, 35Q60, 35Q61, 65Z05, 65M12
\end{AMS}

\section{Introduction}

In many materials, such as metal or biological tissue, lossy and dispersive effects resulting from the electronic response of the material have a considerable effect on electromagnetic wave propagation and must be included in the model. These materials are referred to as ``dispersive" media. Simple models of dispersive optical media often use a classical linear Newtonian description to describe the response of charge carriers to an external electric field. Three common examples are the Debye, Lorentz, and Drude models, all of which can be generalized using the Pad\'e approximant method \cite{prokopeva_2011,prokopeva_2011_2}.

Accurate time-domain simulation of dispersive optical effects is important for various applications in optical computing, imaging, and sensing, where transient electromagnetic waves play an important role in the system of interest. Furthermore, these applications often involve multiple materials that meet along a common interface, which results in a situation with discontinuous material parameters and the possibility of important interface phenomenon. For example, the propagating surface waves (``surface plasmon polaritons") at interfaces between dielectrics and metals, made possible by the dispersive nature of the metal, are of considerable interest in the field of plasmonics, and dispersion plays an important role in their excitation and attenuation \cite{maier_2007,pitarke_2007}. It is therefore desirable to derive high-order accurate time-domain solvers to simulate dispersive electromagnetic wave propagation and have the capability to treat material interfaces.

Perhaps the most common time-domain scheme is the ubiquitous second-order accurate Yee scheme \cite{yee_1966,taflove_2005}. This finite-difference time-domain (FDTD) solver for computational electromagnetics  solves the first-order form of Maxwell's equations on a grid which is staggered in time and space. The scheme is attractive for many reasons including its efficiency and conservation of a discrete energy. In addition, linear-dispersive-material physics can be included into a traditional Yee-type schemes in at least two ways; the auxiliary differential equation (ADE) method, and recursive convolution (RC). The ADE approach couples Maxwell's equations to the ordinary differential oscillator equation derived from the classical description discussed above~\cite{kashiwa_1990,kashiwa_1990_2,kashiwa_1990_3,joseph_1991,gandhi_1993,chun_2013}. This equation is then approximated using similar discretization rules as used to treat the governing PDEs. 
On the other hand, the RC approach directly incorporates the solution to the ordinary differential oscillator equation into Maxwell's equations in the form of a time-delay convolution of the electric field, which is subsequently approximated  using a discrete sum through numerical quadrature~\cite{luebbers_1990,luebbers_1991,luebbers_1992,kelley_1996,chun_2013}.
Rather than storing the full time-history of the electric field, this term may be updated recursively at each time step due to the exponential kernel of the convolution. Higher-order accurate methods using the ADE approach have also been developed \cite{young_1995,young_1996,young_1997,prokopidis_2004,prokopidis_2006}, while implementations of the RC approach are limited to second-order accuracy in time due to the choice of convolution discretization. Various other approaches exist for the computational simulation of dispersive optical media in the time domain. An FDTD approach for both dispersive and nonlinear electromagnetic effects was developed using the Z-transform \cite{sullivan_1992}, the implementation of which has many similarities with the RC and ADE methods. 

%In addition there is a wide range of finite element and finite volume methods for both non-dispersive \cite{nedelec_1980,jin_1993,rodrigue_2001} and dispersive optical media \cite{stoykov_2003,edelvik_2003,banks_2009}. 

%The transition-line-matrix method \cite{christopoulos_1995} was also adapted to dispersive media \cite{barba_1996,barba_2001}.

The presence of material interfaces complicates numerical simulation since, in general, solutions are not smooth where physical parameters are discontinuous. Unless this irregularity is properly treated in the numerical method, difference approximations can suffer from errors associated with inaccuracies in the interface equations, which usually limits the overall accuracy to first-order at best. This is a well-known problem in computational wave equation literature \cite{ditkowski_2001,kreiss_2006}, and is considerably more difficult to avoid when the interface does not align with the numerical grid, as is often the case with traditional staggered cartesian grids. Successful attempts to reduce this error have been made in both the non-dispersive \cite{mohammadi_2005} and dispersive \cite{popovic_2003,deinega_2007,liu_2012} setting by smoothing the material parameters in the numerical cells adjacent to curved interfaces. In the event that high-order accuracy near material interfaces is required, or smoothing of the material parameters cannot be tolerated for other reasons, physical jump conditions can be incorporated directly into the discretization scheme as numerical compatibility conditions. This is often referred to as the derivative matching or matched interface and boundary (MIB) method \cite{zhao_2004,henshaw_2006,zhang_2016}. Note that this general approach may also be used at (curved) exterior boundaries, e.g. perfect conductors, to retain full accuracy. For example in \cite{henshaw_2006}, Henshaw presented a fourth-order accurate scheme which solved Maxwell's equations as a second-order vector wave equation on curvilinear and overlapping grids. Most recently, the MIB approach has been adapted for several linear optical dispersion models at curved interfaces using a hybrid formulation of dispersive Maxwell's equations \cite{nguyen_2014,nguyen_2014_2,nguyen_2015,nguyen_2016}.

%Several schemes have been proposed for the solution of non-dispersive Maxwell's equations on non-cartesian or unstructured grids with potentially curved boundaries \cite{lee_1992,dey_1997,madsen_1995,xie_2002,xie_2002_2}. To obtain full accuracy at curved interfaces, physical jump conditions have been used to derive numerical compatibility conditions. This is known as the derivative matching or matched interface and boundary (MIB) method \cite{zhao_2004,henshaw_2006,zhang_2016}. The same approach may also be used at physical boundaries. In \cite{henshaw_2006}, Henshaw presented a high order parallelizable scheme which solved Maxwell's equations as a second-order wave equation on curvilinear and overlapping grids. Most recently, the MIB approach has been adapted for several linear optical dispersion models at curved interfaces using a hybrid formulation of dispersive Maxwell's equations \cite{nguyen_2014,nguyen_2014_2,nguyen_2015,nguyen_2016}.
 
In addition to FDTD schemes, there exists a wide range of finite element methods for the electromagnetic wave problems \cite{nedelec_1980,jin_1993,rodrigue_2001}. Various finite-element time-domain (FETD) and discontinuous Galerkin (DGTD) schemes also exist for the solution of the dispersive Maxwell's equations in both first- \cite{lu_2004,gedney_2012} and second-order form \cite{jiao_2001,li_2006,banks_2009}. These typically implement the dispersive material effects using either the RC or ADE approach, where again ADE method is used to achieve high-order accuracy than the second-order accurate RC discretization \cite{gedney_2012}. By choosing the appropriate numerical flux between elements, these methods may be designed to avoid the aforementioned loss of accuracy at material discontinuities \cite{lu_2004,li_2008,zhang_2016}.

 In this paper, we propose a numerical scheme for the solution of the time-dependent Maxwell's equations with linear dispersion which can achieve arbitrary desired order of accuracy in time and space. Following the approach in \cite{henshaw_2006} for the non-dispersive case, the dispersive Maxwell's equations are expressed as a vector wave equation with a time-history convolution term. A modified-equation time stepping approach, is used to obtain a high-order accurate approximation of the equation through a temporal Taylor series. 
 %The differential equation is then used to define spatial derivatives and temporal convolutions for the time derivatives. 
 Consecutive terms in the expansion require additional time-history convolution terms, which are derived from the original temporal convolution representing the formal solution to the ordinary differential oscillator equation. The integral terms are approximated using quadrature rules at the appropriate order of accuracy in time, and subsequently transformed into a recursive convolution formulation. In order to preserve accuracy of the scheme at the interface, the governing PDEs are used to derive high order centered approximations of the physical Maxwell interface conditions. Note that the proposed scheme is amenable to discretization on the curvilinear overlapping grids and high order accurate symmetric discretizations of the generalized Laplace operator, as in for example~\cite{henshaw_2006}.
 
In section \ref{section:governing_equations}, we review the most common linear dispersion models and the way in which they couple with the macroscopic form of Maxwell's equations. A vector wave equation formulation of the dispersive Maxwell's equations is then derived. Section \ref{section:scheme} describes our proposed numerical scheme for the general case, and explicitly presents the second- and fourth-order accurate versions for the Drude model. An analysis of the numerical stability of these two schemes for the Cauchy problem is also presented. In particular, we discuss a subtle aspect of the stability for the second-order code which admits weak exponential growth of the numerical solution. On the other hand, the fourth-order accurate version admits no such spurious growth, and so it is superior from both and accuracy and stability perspective. In Section~\ref{sec:interface}, the discrete treatment of interface conditions is discussed including explicit formulations for the afore mentioned Drude model and the second- and fourth-order accurate schemes. Finally in Section~\ref{section:six}, numerical results are presented which confirm the theoretical predictions as well as illustrate the spurious growth in the second-order accurate scheme and lack thereof in the fourth-order accurate scheme.
%the second- and fourth- order Drude schemes are implemented in one and two spatial dimensions at a discontinuous material interface between two Drude media. The aforementioned exponential growth is also demonstrated.

%\subsection{Notation and Definitions}
%{ \color{red} Fourier transform is used extensively, need definition?}

\section{Governing Equations}  \label{section:governing_equations}
%{\bf Do we need to define $\xv$, $t$, $\omega$, etc}

This work considers time-dependent electromagnetic wave propagation without external forcing as governed by Maxwell's equations\begin{subequations}
\begin{align}
  \nabla \times \mathbf{E} &  = - \partial_t \mathbf{B}, \label{eqn:macro1}\\
  \nabla \cdot \mathbf{D} & = 0,  \label{eqn:macro2} \\
  \nabla \times \mathbf{H} & =  \partial_t \mathbf{D},\label{eqn:macro3} \\
  \nabla \cdot \mathbf{B} & = 0. \label{eqn:macro4}
\end{align}
\end{subequations}
Here, $t$ is time, $\xv$ is the independent (vector) spatial coordinate, $\mathbf{E} = \mathbf{E}(\xv,t)$ is the electric field, $\mathbf{D} = \mathbf{D}(\xv,t) $ is the displacement field, $\mathbf{B} = \mathbf{B}(\xv,t)$ is the magnetic field, and $\mathbf{H} = \mathbf{H} (\xv,t)$ is the magnetizing field. In the scope of this article, we consider materials which are linear and non-dispersive with respect to magnetization\footnote{Note that this assumption is not critical to the basic approach and schemes could be extended to magnetically dispersive materials as well.} and so
\begin{align}
  \mathbf{B} = \mu \mathbf{H},  \label{eqn:HtoB}
\end{align}
where $\mu  = \mu(\xv) > 0$ is the permeability. Here we focus on the situation of electromagnetic wave propagation in a domain consisting of distinct materials which are separated by material interfaces, and as such, the permeability is assumed to be piecewise constant.

Consider linear, isotropic, and (temporally) dispersive media in which the electric permittivity depends continuously on the temporal frequency, $\omega$, of the electromagnetic fields. Such materials may be modeled in the frequency (Fourier) domain via the linear constitutive relation 
\begin{align}
  \widehat{\mathbf{D}}(\xv,\omega) = \widehat{\epsilon}( \omega) \widehat{\mathbf{E}}(\xv,\omega) =  \left( \epsilon_0 \epsilon_r + \widehat{\chi}(\omega)\right)  \widehat{\mathbf{E}} (\xv, \omega). \label{eqn:frequencydomain}
  %= \epsilon_0 \epsilon_r \widehat{E}(\xv, \omega) + \widehat{P}(\xv, \omega) . 
\end{align}
Here, $\widehat{\epsilon}(\omega)$ is the total frequency-dependent permittivity, $\epsilon_0$ is the permittivity of free space, $\epsilon_r$ is the high-frequency relative permittivity constant of the medium, and $ \widehat{\chi}(\omega) \equiv \widehat{\epsilon}(\omega)  - \epsilon_0 \epsilon_r $ is the electric susceptibility. The latter is defined out of notational convenience for the following discussion. Both $\widehat{\epsilon}(\omega)$ and $\widehat{\chi}(\omega)$ are typically obtained from a classical linear description of electron motion in the material. Note also that it is common to find a definition of the polarizability $\widehat{\mathbf{P}} \equiv \widehat{\chi}(\omega)  \widehat{\mathbf{E}} (\omega) $, but we do not make use of this quantity and make no further mention of it here. In the time-domain, equation \eqref{eqn:frequencydomain} can be written
\begin{align}
  \mathbf{D} (\xv,t) & = \int_{0}^{\infty} \epsilon (  \tau) \mathbf{E} (\xv,t - \tau) d \tau  \label{eqn:epsilon}  \\
  & =  \epsilon_0 \epsilon_r \mathbf{E} (\xv,t) + \int_{0}^{\infty} \chi (  \tau) \mathbf{E} (\xv,t - \tau) d \tau.  \nonumber  
   %= \epsilon_0 \epsilon_r E(\xv,t) + P(\xv,t)
\end{align}
Here, $\epsilon ( \tau)$ and $\chi ( \tau )$ are the permittivity and the susceptibility kernels respectively, and are obtained via the inverse Fourier transform from $\widehat{\epsilon}(\omega)$ and $\widehat{\chi}(\omega)$. Equation \eqref{eqn:epsilon} may be interpreted as a description of the non-instantaneous delayed response of the material to the applied electric field. A summary of some common linear dispersion models and their associated susceptibilities is found in table \ref{table:dispersionmodels}. Note that while the convolution bounds for the traditional Fourier transform are $\tau \in \mathbb{R}$, the bounds of the convolution in \eqref{eqn:epsilon} are necessarily restricted to past values of the electric field, ruling out any non-causal effects. This is reflected in the Fourier transform by the appearance of the Heaviside function $\Theta(\tau)$ in the susceptibility kernel $\chi(\tau)$.

\begin{table}[h]\begin{center}\begin{tabular}{|c||c|c|}
\hline
Dispersion Model & Susceptibility $\widehat{\chi}(\omega)$ & Susceptibility Kernel  $\chi(\tau)$    \\
\hline\hline
 \mystrutS Debye &   $\frac{ \gamma \epsilon_0 ( \epsilon_s -  \epsilon_r ) }{\gamma + i \omega } $	& $ \epsilon_0 ( \epsilon_s - \epsilon_r ) \gamma e^{- \gamma \tau}	\Theta( \tau )  $  \\
 \hline
 \mystrutS Lorentz & $ \frac{ \epsilon_0 \omega_p^2 }{ ( \omega_0^2 - \omega^2) + i \gamma \omega} $	& $ \epsilon_0 \omega_p^2 \frac{ \sin \left( \sqrt{ \omega_p^2 - \gamma^2 / 4} \tau \right) }{ \sqrt{ \omega_p^2 - 4 \gamma^2 / 4} }  e^{- \gamma \tau / 2} \Theta(\tau) $	  \\
\hline
 \mystrutS Drude & $ - \frac{ \epsilon_0 \omega_p^2}{ \omega^2 - i \gamma \omega} $ & 	 $\frac{ \epsilon_0 \omega_p^2}{\gamma} \left( 1 - e^{ - \gamma \tau} \right) \Theta(\tau)	$   \\
 \hline
 \mystrutS Generalized & 
   \begin{tabular}{c}
     \footnotesize $ \begin{aligned} & \epsilon_0 c_1 \bigg( \frac{ e^{i c_2} }{ c_3 - ( \omega + i c_4 )} \\ & \qquad  \quad +   \frac{ e^{- i c_2} }{  c_3 + ( \omega + i  c_4 )} \bigg) \end{aligned} $
   \end{tabular} & 
   $ \epsilon_0 c_1 e^{- c_4 \tau} \sin \left( c_3 \tau - c_2 \right) \Theta(\tau) $  \\
 \hline\end{tabular}
\caption{Classical linear dispersion models \cite{prokopeva_2011,prokopeva_2011_2}. Here, $\delta(\tau)$ is the Dirac delta function, $\Theta(\tau)$ is the Heaviside function, $\epsilon_0$ is the permittivity of free space, $\epsilon_s$ is the static permittivity, $\epsilon_r$ is the high frequency permittivity, $\omega_p$ is the electron plasma frequency, and $\gamma$ is the damping coefficient. Note that in order to derive the Lorentz susceptibility kernel, one requires $\gamma < \omega_p / 2$, which is a physically realistic assumption for common Lorentz media. In the generalized model, Pad\'{e} Approximants are used to fit the constant parameters $c_1, c_2, c_3$ and $c_4$ to experimental data. Typically, several of these terms are added together to capture each of the optical resonances of the medium.}
\label{table:dispersionmodels}\end{center}\end{table}

Equations \eqref{eqn:macro1} through \eqref{eqn:macro4} along with \eqref{eqn:epsilon}, fully describe the propagation of electromagnetic waves in linear, isotropic, and (temporally) dispersive media. This formulation can be simplified to a description of the electric field $\mathbf{E} (\xv,t)$ in isolation as follows. First, working in the frequency domain, \eqref{eqn:macro2} can be combined with \eqref{eqn:frequencydomain} to yield
\begin{align}
  \nabla \cdot \widehat{\mathbf{D}}(\xv, \omega)&  = 0, \nonumber \\
  \nabla \cdot \widehat{\epsilon}(\omega) \widehat{\mathbf{E}}(\xv, \omega) &  = 0, \nonumber \\
  \widehat{\epsilon}(\omega) \nabla \cdot \widehat{\mathbf{E}}(\xv, \omega) &  = 0, \nonumber
\end{align}
from whence we obtain the constraint
\begin{align}
  \nabla \cdot \mathbf{E} (\xv, t) = 0. \label{eqn:gauss}
\end{align}
Now taking the time derivative of \eqref{eqn:macro3}, and making use of relations \eqref{eqn:HtoB} and \eqref{eqn:gauss} gives
\begin{align}
\begin{array}{ll} 
  \mu \partial_t^2 \mathbf{D} & = \nabla \times \partial_t \mathbf{B}, \\
  \mu \partial_t^2 \mathbf{D} & = -\nabla \times \nabla \times \mathbf{E}, \\
  \mu \partial_t^2 \mathbf{D} & = \Delta \mathbf{E} - \nabla ( \nabla \cdot \mathbf{E} ),  \\
  \mu \partial_t^2 \mathbf{D} & = \Delta \mathbf{E} . 
  \end{array}
  \label{eqn:vectorid}
\end{align}
The quantity $\partial_t^2 \mathbf{D}$ can be expressed in terms of the electric field $\mathbf{E}$ by using \eqref{eqn:epsilon}. For convenience, we work in the frequency domain to obtain
\begin{align}
  - \omega^2 \widehat{\mathbf{D}}(\xv, \omega) = 
  %- \omega^2 \ \epsilon_0 \epsilon_r \widehat{E}(x, \omega) - \omega^2 \widehat{P}(x,t) = 
  - \omega^2 \ \epsilon_0 \epsilon_r \widehat{\mathbf{E}}(\xv, \omega) - \omega^2 \widehat{\chi}(\omega) \widehat{\mathbf{E}}(\xv,t). \label{eqn:Dttfourier}
\end{align}
Now by defining
\begin{align}
  \widehat{\eta}(\omega) \equiv  \omega^2 \widehat{\chi}(\omega) , \label{eqn:etadef}
\end{align}
Equation \eqref{eqn:Dttfourier} can be expressed in the time domain as 
\begin{align}
  \frac{ \partial^2 \mathbf{D}}{ \partial t^2}  = \epsilon_0 \epsilon_r \partial_t^2  \mathbf{E} - \int_0^{\infty} \eta(\tau) \mathbf{E}(t - \tau) d \tau, \label{eqn:Dtt}
\end{align}
via the inverse Fourier transform. The respective functions $\widehat{\eta}(\omega)$ and $\eta(\tau)$ for each of the dispersion models shown in Table~\ref{table:dispersionmodels} are given in Table \ref{table:etas}. Finally, equations \eqref{eqn:vectorid} and \eqref{eqn:Dtt} give the dispersive and dissipative wave equation
\begin{align}
 \frac{ \partial^2 \mathbf{E}}{ \partial t^2}  = \frac{1}{\mu \epsilon_0 \epsilon_r} \Delta \mathbf{E} + \frac{1}{ \epsilon_0 \epsilon_r} \int_0^{\infty} \eta(\tau) \mathbf{E}(t - \tau) d \tau , \label{eqn:waveE1}
\end{align}
or alternatively, 
\begin{align}
 \frac{ \partial^2 \mathbf{E} }{ \partial t^2}  = \frac{1}{\mu \epsilon_0 \epsilon_r} \Delta \mathbf{E} + \frac{1}{ \epsilon_0 \epsilon_r} \eta *  \mathbf{E} =  \frac{1}{\mu \epsilon_0 \epsilon_r}  \left( \Delta + \mu  \eta \  * \right) \mathbf{E}. \label{eqn:waveE2}
\end{align}
Finally, a complete initial boundary value problem can be formulated for the domain $\xv \in \Omega$ with boundary $\xv \in \partial \Omega$ as
\begin{subequations}
\begin{align}
& \partial_t^2 \mathbf{E}(\xv,t)  =  \frac{1}{\mu \epsilon_0 \epsilon_r}  \left( \Delta + \mu  \eta \  * \right) \mathbf{E}(\xv,t) , \label{eqn:governingE} \\
 & \mathbf{E}(\xv,0) = \mathbf{f}_1(\xv), \qquad \partial_t \mathbf{E}(\xv,0) = \mathbf{f}_2(\xv), \qquad \eta *  \mathbf{E}(\xv,0) = \mathbf{f}_3(\xv),  \\
 & \mathcal{B}\left[\mathbf{E}(\xv,t)\right] = \mathbf{g}(\xv,t), \qquad \xv \in \delta \Omega  \label{eqn:governingE3} 
\end{align}
\label{eqns:governing}
\end{subequations}where $\mathcal{B}[\cdot]$ is a generic boundary operator that can be used to specify perfect conductance, far-field conditions, or given fields for example.

\begin{table}[h]
\begin{center}
\begin{tabular}{|c||c|c|}
\hline
\begin{tabular}{c} Dispersion \\ Model \end{tabular} &   $\widehat{\eta}(\omega)$ &  $\eta(\tau)$  \\
\hline\hline
 \mystrutS Debye  	&    $ -  \frac{ \gamma \epsilon_0 ( \epsilon_s -  \epsilon_r ) \omega^2 }{\gamma + i \omega } $  & \footnotesize$ \begin{aligned} & \epsilon_0 \gamma ( \epsilon_s -  \epsilon_r ) \big[  \delta'(\tau) + \gamma \delta(\tau) \\
&  \qquad \qquad \qquad - \gamma^2 e^{ - \gamma \tau} \Theta(\tau)  \big] \end{aligned} $ \\
 \hline
 \mystrutS Lorentz 	&  $ -   \frac{ \epsilon_0 \omega_p^2  \omega^2}{ ( \omega_0^2 - \omega^2) + i \gamma \omega} $	 &  \footnotesize     \begin{tabular}{c} $
  \epsilon_0 \omega_p^2 \big[ \delta(\tau) + \frac{ e^{- \gamma \tau / 2}}{ 4 a}
  \big(  - 4 \gamma a \cos( a \tau) $ \smallskip \\
  	$ + ( \gamma^2 - 4 a^2 ) \sin( a \tau) \big)  \Theta(\tau) \big]  $  
	\end{tabular} \\
\hline
 \mystrutS Drude & $  - \frac{ \epsilon_0 \omega_p^2 \omega }{ \omega - i \gamma } $ &  \footnotesize $ \epsilon_0 \omega_p^2 \left( - \delta(\tau)  + \gamma e^{ - \gamma \tau}  \Theta(\tau)  \right) $ \\
 \hline
 \mystrutD Generalized &   \footnotesize $ \begin{aligned} &  - \epsilon_0    c_1 \bigg( \frac{ \omega^2 e^{i c_2} }{ c_3 - ( \omega + i c_4 )} \\ &  \hspace{1.5cm} +   \frac{ \omega^2 e^{- i c_2} }{  c_3 + ( \omega + i  c_4 )} \bigg) \end{aligned} $ &  \footnotesize
   \begin{tabular}{c}
      $ \epsilon_0 c_1 \big[ \left( c_3 \cos ( c_2) + c_4 \sin( c_2) \right) \delta(\tau)$ \smallskip \\ 
      $ + e^{- c_4 \tau} \big( - c_3 c_4 \cos( c_3 \tau - c_2 )  $ \smallskip \\  
      $ + ( c_4^2 - c_3^2) \sin( c_3 \tau - c_2 ) \big) \Theta(\tau) \big]  $
   \end{tabular} 
 \\ \hline
\end{tabular}
\caption{ Auxiliary functions $\widehat{\eta}(\omega)$ and $\eta(\tau)$, defined in \eqref{eqn:etadef}, for the classical linear dispersion models from table \ref{table:dispersionmodels} where all of the relevant constant parameters are defined. In the Lorentz model, $a \equiv  \sqrt{ \omega_p^2 -  \gamma^2 / 4}$. }
\label{table:etas}
\end{center}
\end{table}

As discussed earlier, our primary interest is in situations with piecewise-constant materials. In this situation, the interface between two materials plays a key role, and its treatment is one focus of the current manuscript. Maxwell's equations \eqref{eqn:macro1} through \eqref{eqn:macro4} may be used to derive jump conditions at interfaces where material parameters are discontinuous. Consider an interface $\mathcal{I}$ between two material domains. Integrating each of \eqref{eqn:macro1} through \eqref{eqn:macro4} over an appropriate control volume spanning the interface gives the following physical interface jump conditions:
\begin{subequations}
\begin{align}
\left[ \mathbf{n} \times \mathbf{E} \right]_{\mathcal{I}} & = 0,  \label{eqn:jumps1} \\
\left[ \mathbf{n} \cdot \mathbf{D} \right]_{\mathcal{I}} & = 0,   \label{eqn:jumps2} \\
\left[ \mathbf{n} \times \mathbf{H} \right]_{\mathcal{I}} & = 0, \label{eqn:jumps3} \\
\left[ \mathbf{n} \cdot \mathbf{B} \right]_{\mathcal{I}} = \left[ \mathbf{n} \cdot \mu \mathbf{H} \right]_{\mathcal{I}}  & = 0. \label{eqn:jumps4}
\end{align}
\label{eqns:int}\end{subequations}
Here, $[ f ]_{\mathcal{I}}$ denotes the jump in the function $f$ across the interface, and $n$ is the normal unit vector to the interface.

\section{Summary of Numerical Scheme}  
\label{section:scheme}
Consider now the interior discretization of the governing dispersive wave equation \eqref{eqn:governingE}, that is to say, independent of physical boundaries or interfaces. The basic approach will follow the description in~\cite{henshaw_2006}, with appropriate modifications as required to treat the dissipative and dispersive time convolutions. In~\cite{henshaw_2006}, Henshaw considered the second-order (in derivative) formulation of the non-dispersive Maxwell's equations for dielectrics ({\it i.e.} \eqref{eqn:macro1} through \eqref{eqn:macro4}, with \eqref{eqn:HtoB} and $D  = \epsilon_0 \epsilon_r E$). In the case of piecewise constant material parameters $\epsilon_r$ and $\mu$ this reduces to the wave equation with speed $c = 1/\sqrt{ \epsilon_0 \epsilon_r \mu}$:
\begin{align}
& \partial_t^2 u(\xv,t)  = c^2 \Delta u(\xv,t),
\end{align} 
where $u$ is used to indicate a generic component of the solution. The time-stepping approach in~\cite{henshaw_2006} used a so-called modified-equation, or Taylor, time stepper based on the temporal Taylor expansion
\begin{align}
 \label{eqn:expansion1}  u(\xv,t + \Delta t) - 2 u(\xv,t) & + u(\xv,t - \Delta t)  \\
 \nonumber & = 2 \left( \frac{ \Delta t^2 }{2!}  \partial_t^2 u +  \frac{ \Delta t^4}{4!} \partial_t^4 u  +   \frac{ \Delta t^6}{6!} \partial_t^6 u + \dots \right).
%& = 2 \left( \frac{ \Delta t^2 }{2!} c^2 \Delta u +  \frac{ \Delta t^4}{4!} c^4 \Delta^2  u +   \frac{ \Delta t^6}{6!} c^6  \Delta^3 u + \dots \right) ,  \label{eqn:expansion1}
\end{align}
Subsequently, the governing PDE is used to replace temporal with spatial derivatives and arrive at a discrete approximation of arbitrary spatial and temporal order-of-accuracy using only three time levels. 

Motivated by this approach, we apply the modified-equation time-stepping approach to the governing equation \eqref{eqn:governingE}. Since we are considering piecewise constant media, the Fourier transform and the spatial independence of the kernel $ \eta $ imply that the operators $\partial_t^2$, $\Delta$ and $\eta \ * $ all commute. Therefore, \eqref{eqn:governingE} implies that for $m \in \mathbb{N}$,
\begin{align}
\partial_t^{2m} \mathbf{E} = c^{2m} \left( \Delta + \mu \eta \ * \right)^m \mathbf{E}. \label{eqn:higherderivs}
\end{align}
Thus, the modified-equation time stepper for the electric field can be written as
\begin{align}
\label{eqn:expansion2} \mathbf{E}(\xv,t + \Delta t) & - 2 \mathbf{E}(\xv,t)  + \mathbf{E}(\xv,t - \Delta t)   \\ %& = 2 \left( \frac{ \Delta t^2 }{2!}  \partial_t^2 E +  \frac{ \Delta t^4}{4!} \partial_t^4 E  +   \frac{ \Delta t^6}{6!} \partial_t^6 E + \dots \right) \nonumber \\
& = 2 \bigg( \frac{ \Delta t^2 }{2!} c^2 ( \Delta +   \mu \eta \ *  ) \mathbf{E}  +  \frac{ \Delta t^4}{4!} c^4 ( \Delta +  \mu \eta \ *  )^2 \mathbf{E} \nonumber \\
&  \hspace{4cm} +   \frac{ \Delta t^6}{6!} c^6  ( \Delta + \mu \eta \ * ) ^3 \mathbf{E} + \dots \bigg) .   \nonumber 
\end{align}
The operator binomial $( \Delta +  \mu \eta \ * )$ can be directly expanded to obtain its higher powers:
\begin{align}
 &  ( \Delta +  \mu \eta \ * )^2 \mathbf{E} = \Delta^2 \mathbf{E} + 2 \mu  \Delta   \eta * \mathbf{E}  +   \mu^2 \eta * \eta * \mathbf{E}, \label{eqn:operatorpowers0} \\
 &  ( \Delta +  \mu \eta \ * )^3 \mathbf{E} = \Delta^3 \mathbf{E} + 3 \mu  \Delta^2   \eta * \mathbf{E} +  \mu^2 \Delta \eta * \eta * \mathbf{E} +  \mu^3 \eta * \eta * \eta * \mathbf{E}  . \label{eqn:operatorpowers}
\end{align}
The convolution operators in \eqref{eqn:operatorpowers0} and \eqref{eqn:operatorpowers} can be obtained by computing powers of $\widehat{\eta}(\omega)$ in the frequency domain and then 
ing the Fourier transform. According to the expressions for $\eta$ in Table~\ref{table:etas}, each of these operators may be rewritten as the sum of the electric field $\mathbf{E}$, up to a constant, and a convolution term, which we define as an auxiliary integral expression. For example, for the Drude model,
\begin{align}
\label{eqn:order1drude} \eta * \mathbf{E}(\xv, t)  & = \epsilon_0 \omega_p^2 \int_0^{\infty} \left(  -  \delta(\tau) + \gamma e^{- \gamma \tau} \right) \mathbf{E}(\xv, t - \tau) d \tau \\
\nonumber & = - \epsilon_0 \omega_p^2 \mathbf{E}(\xv,t) + \epsilon_0 \omega_p^2 \gamma \int_0^{\infty}  e^{- \gamma \tau}  \mathbf{E}(\xv, t - \tau) d \tau . 
\end{align}

Fully discrete schemes can now be derived using appropriately accurate quadrature rules, which can be implemented in code using recursive convolution, along with centered difference discretizations of the powers of the Laplacian, as required to obtain the desired order of accuracy in space. The necessary orders of accuracy for the convolution and spatial discretizations follow the prescription given in~\cite{henshaw_2006}, and relates to the power of $\dt$ multiplying the term in \eqref{eqn:expansion2} as well as overall accuracy desire. In particular, the leading terms with $\dt^2$ require full accuracy, the next terms with $\dt^4$ can be two orders less accurate, and so on. Examples of second- and fourth-order accurate spatial and temporal discretizations for the Drude model are given below to illustrate the mechanics of the process. In each case, the stability of the fully discrete scheme applied to the Cauchy problem is analyzed. The second-order accurate scheme is found to be stable for finite-time integration given a time-step restriction, although it does admit bounded exponential growth. On the other hand, the fourth-order accurate scheme admits no exponential growth provided a time-step restriction is satisfied, and is therefore stable for both finite time, and infinite time integration.

\subsection{Second-Order Accurate Scheme for the Drude Model} 
\label{section:secondorderdrude}

Consider now a second-order accurate discretization of the governing equation \eqref{eqn:governingE} with $\widehat{\eta}(\omega)$ and $\eta(\tau)$ given by the Drude model in Table \ref{table:etas}. For convenience, the auxiliary quantity $\psiv$ is introduced as
\begin{align}
  \psiv(\xv,t) =  \int_0^{\infty}  e^{- \gamma \tau}  \mathbf{E}(\xv, t - \tau) d \tau .  \label{eqn:psidef}
\end{align}
Now let $\Ejn  \approx \mathbf{E}( \xv_{\jv}, n \Delta t)$ and $\psijn  \approx \psiv( \xv_{\jv}, n \Delta t)$ denote approximations to $\mathbf{E}$ and $\psiv$, where $\jv \in \mathbb{Z}^{\mathcal{D}}$ is a multi-index so that $\xv_{\jv}$ indicates a grid point in multiple space dimensions. In the usual way, a second-order accurate discretization can be derived using a second-order accurate truncation of  the expansion in \eqref{eqn:expansion2} along with the standard second-order accurate difference approximation of the Laplacian (denoted $\Delta_{2h}$) to  give
\begin{align}
  \mathbf{E}_{\jv}^{n+1} - 2 \mathbf{E}_{\jv}^n + \mathbf{E}_{\jv}^{n-1}  =   \Delta t^2 \left(  c^2 \Delta_{2h}\Ejn -  \frac{ \omega_p^2 }{\epsilon_r} \Ejn +   \frac{ \omega_p^2 \gamma }{\epsilon_r} \psijn \right), \label{eqn:E2nd}
\end{align}
where $c^2 = 1/(\epsilon_0 \epsilon_r \mu)$. For completeness, the discrete Laplacian is given by
\[
  \Delta_{2h}=\sum_{d=1}^{\mathcal{D}}D_{+,x_d}D_{-,x_d},
\]
where $\mathcal{D}$ is the number of spatial dimensions, and $D_{\pm,x_d}$ are the usual forward and backward divided difference operators in the $x_d$ direction given by
\[
  D_{+,x_d}u_{\jv}=\frac{u_{\jv+e_d}-u_{\jv}}{h_d} \qquad  \text{and} \qquad D_{-,x_d}u_{\jv}=\frac{u_{\jv}-u_{\jv-e_d}}{h_d}. 
\]
Here, $e_d$ is the vector of all zeros except a 1 in the $d^{\hbox{th}}$ entry, and $h_d$ is the grid spacing in the $x_d$ direction. For future reference, the undivided difference operators $\delta_{\pm,x_d}$ are defined similarly, as for example
\[
  \delta_{+,x_d}u_{\jv}={u_{\jv+e_d}-u_{\jv}}.
\]

Equation \eqref{eqn:E2nd} defines a temporal update for the electric field $\mathbf{E}$, and a corresponding update equation for the auxiliary quantity $\psiv$ must also be developed. In order to do so, the convolution term defining $\psij^{n+1}$ is evaluated to second-order accuracy, here using the composite trapezoidal quadrature as
\begin{align}
  \psij^{n+1} & = \frac{\Delta t}{2} \sum_{m = 0}^{\infty} \left(  \Ej^{n +1 - m} e^{- \gamma m \Delta t}+ \Ej^{n - m} e^{- \gamma (m + 1) \Delta t} \right).\label{eqn:ETrap} 
%  & = \frac{\Delta t}{2} \Ej^{n+1} + \Delta t \sum_{m = 0}^{\infty} e^{- \gamma ( m + 1 ) \Delta t} \Ej^{n - m }. \label{eqn:ETrap}
\end{align}
As written, \eqref{eqn:ETrap} would require an infinite time history for the field $\mathbf{E}$. However, due to the form of the kernel, Equation \eqref{eqn:ETrap} can be reorganized as follows
\begin{align}
  \psij^{n + 1} & =  \frac{\Delta t}{2} \Ej^{n+1} + \Delta t \sum_{m = 0}^{\infty} e^{- \gamma (m  + 1 ) \Delta t} \Ej^{n - m }  \\
    & =  \frac{\Delta t}{2} \Ej^{n + 1} + \Delta t  e^{ - \gamma \Delta t}  \Ej^n +  \Delta t \sum_{j = 1}^{\infty} e^{- \gamma ( m + 1 ) \Delta t} \Ej^{n - m }  \nonumber \\
    & =  \frac{\Delta t}{2} \Ej^{n + 1} + \Delta t e^{ - \gamma \Delta t} \Ej^n  +  \Delta t e^{-\gamma\Delta t} \sum_{m = 0}^{\infty} e^{- \gamma ( m + 1 ) \Delta t} \Ej^{n - m - 1}  \nonumber \\
    & = \frac{\Delta t}{2} \Ej^{n + 1}  + \frac{\Delta t}{2} e^{ - \gamma \Delta t}  \Ej^n  + e^{- \gamma \Delta t} \psij^n.  \nonumber
    %\label{eqn:recursionpsi}
\end{align}
This formulation is often referred to as recursive convolution, and gives a time update of $\psiv$ based on data from just one prior time level. The entire second-order accurate solution update is then given as
\begin{subequations}
\begin{align}
  \mathbf{E}_{\jv}^{n+1}  & =   2 \mathbf{E}_{\jv}^n - \mathbf{E}_{\jv}^{n-1} \Delta t^2 \left(  c^2 \Delta_{2h}\Ejn -  \frac{ \omega_p^2 }{\epsilon_r} \Ejn +   \frac{ \omega_p^2 \gamma }{\epsilon_r} \psijn \right), \label{eqn:secondorderexpansion}\\
  \psij^{n + 1} & = \frac{\Delta t}{2} \Ej^{n + 1}  + \frac{\Delta t}{2} e^{ - \gamma \Delta t}  \Ej^n  + e^{- \gamma \Delta t} \psij^n.\label{eqn:recursionpsi} 
\end{align}
\label{eqns:RC2}
\end{subequations}

\subsection{Stability of the Second-Order Accurate Scheme for the Drude Media} \label{section:2ndorderstab}
One important property of a numerical scheme is that of stability for the Cauchy problem, i.e. the governing PDE on an infinite domain. The key idea of numerical stability is that the discretization not admit solutions with unbounded exponential growth, and therefore that the norm of the solution can be bounded by the initial (and boundary) conditions. However, an important subtlety in the present context is that numerical stability does not imply a lack of exponential growth. In fact as we shall see, the second-order accurate recursive convolution algorithm of \eqref{eqns:RC2} admits solutions that can grow slowly in time when $\gamma\ne0$ and $\omega\ne0$. Nonetheless, the exponential growth rate is finite (and approaches zero with the time-step), and the algorithm is formally stable provided the time-step satisfies certain constraints. Importantly, this result is primarily dependent on the recursive convolution approximation, and not the precise treatment of the spatial derivative operator. As a result, if no exponential growth at all can be tolerated in a particular application, one can simply replace the second-order accurate convolution by the fourth-order accurate convolution described in Section~\ref{section:fourthorderdrude} below, with corresponding changes to the quantitative stability bounds.

In the discussion of stability to follow, the results center around the exact solutions of the discretization represented by \eqref{eqns:RC2}, which is now presented. The equations are rescaled to obtain
\begin{subequations}
\begin{align}
  \Ej^{n + 1} & = 2 \Ej^n - \Ej^{n - 1}  +   \sum_{d=1}^{\mathcal{D}}\lambda_{d}^2\delta_{+,x_d}\delta_{-,x_d}\Ej^n - \Omega^2 \Ej^n +  \Omega^2 \Psij^n   , \label{eqn:2ndorderscheme_EScaled}  \\
  \Psij^{n + 1} & = \frac{ \Gamma }{2} \Ej^{n + 1} + \frac{ \Gamma }{2} e^{- \Gamma} \Ej^n + e^{ - \Gamma} \Psij^n   \label{eqn:2ndorderscheme_psiScaled}
 \end{align}
 \label{eqns:2ndorderscheme_Scaled}
 \end{subequations}
where $\Psij^n \equiv \gamma \psij^n$, and the following dimensionless parameters have been identified:
\begin{align}
 \lambda_{d} \equiv c\frac{\Delta t}{h_d} , \qquad \Omega \equiv  \sqrt{ \frac{ \Delta t^2  \omega_p^2  }{\epsilon_r} } , \qquad \hbox{and} \qquad \Gamma \equiv \Delta t \gamma.
\label{eqn:stabscaling}
\end{align}
Equations \eqref{eqn:2ndorderscheme_EScaled} and \eqref{eqn:2ndorderscheme_psiScaled} are linear constant coefficient difference equations, and so can be solved  by seeking separable solutions of the form
\begin{align}
  \Ej^n = \mathbf{C}_E A^n e^{i \, \kv\cdot\xv_{\jv}} ,  \qquad \Psi_j^n = \mathbf{C}_{\Psi}A^n e^{i \, \kv\cdot\xv_{\jv}} ,
  \label{eqn:2ndAnzatz}
\end{align}
where $\kv$ is a real-valued, vector wave number and $A$ is a complex-valued amplification factor\footnote{This approach to determining the solution to the difference equation is essentially a discrete Fourier transformation in space and a discrete Laplace transformation in time.}. Substitution of \eqref{eqn:2ndAnzatz} into \eqref{eqns:2ndorderscheme_Scaled} yields the linear system
\begin{align}
  \begin{bmatrix}
     - A^2 + 2 A - 1 - \sum_{d=1}^{\mathcal{D}}4 \lambda_{d}^2  \sin^2 \left( \frac{ \xi_d}{2} \right) A -   \Omega^2 A  & \Omega^2A \\  
    \frac{ \Gamma }{2} \left( A + e^{-\Gamma} \right) & -A +  e^{ - \Gamma}
  \end{bmatrix}
  \begin{bmatrix} 
    \mathbf{C}_E \\ \mathbf{C}_{\Psi}
  \end{bmatrix}  
  = 
  \begin{bmatrix}
    0\\0
  \end{bmatrix}
  \label{eqn:ampmatrix}
\end{align}
where $\xi_d=k_dh_d$ is the grid wave number (i.e. $\xi_d$ takes discrete values) in the $x_d$ direction. However it is convenient to allow $\xi_d$ to vary continuously over a $2\pi$ range; here we choose $\xi_d\in[-\pi,\pi]$. Nontrivial solutions of \eqref{eqn:ampmatrix} exist when the determinant of the matrix is zero which gives
\begin{align}
A^3  & + \coeff{2} A^2+\coeff{1} A+\coeff{0}=0,
%\left[ 4\sum_{d=1}^{\mathcal{D}}4 \lambda_{d}^2  \sin^2 \left( \frac{ \xi_d}{2}\right)  + \Omega^2 - e^{- \Gamma} - 2 - \frac{1}{2} \Omega^2 \Gamma \right] A^2 \nonumber \\ 
%& + \left[ - 4\sum_{d=1}^{\mathcal{D}}4 \lambda_{d}^2  \sin^2 \left( \frac{ \xi_d}{2} \right) e^{- \Gamma} - \Omega^2 e^{- \Gamma} + 2 e^{- \Gamma} + 1 - \frac{1}{2} \Omega^2 \Gamma e^{- \Gamma} \right] A - e^{ - \Gamma} = 0 . 
\label{eqn:charpol}
\end{align}
where the coefficients in the polynomial are
\begin{align*}
  \coeff{2} & = 4\sum_{d=1}^{\mathcal{D}}4 \lambda_{d}^2  \sin^2 \left( \frac{ \xi_d}{2}\right)  + \Omega^2 - e^{- \Gamma} - 2 - \frac{1}{2} \Omega^2 \Gamma,\\
  \coeff{1} & = - 4\sum_{d=1}^{\mathcal{D}}4 \lambda_{d}^2  \sin^2 \left( \frac{ \xi_d}{2} \right) e^{- \Gamma} - \Omega^2 e^{- \Gamma} + 2 e^{- \Gamma} + 1 - \frac{1}{2} \Omega^2 \Gamma e^{- \Gamma},\\
  \coeff{0} & = - e^{ - \Gamma}.
\end{align*}
Note that because each vector $\mathbf{C}_E$ and $\mathbf{C}_{\Psi}$ has three components, the full determinant condition is in fact given by
\begin{align*}
( A^3  & + \coeff{2} A^2+\coeff{1} A+\coeff{0})^3  =0,
\end{align*}
and the roots corresponding to each component pair of $\mathbf{C}_E$ and $\mathbf{C}_{\Psi}$ occur thrice and are each determined by \eqref{eqn:charpol}. Equation \eqref{eqn:charpol} is a cubic polynomial defining the amplification factor $A$ which can be solved exactly to give the three roots
\begin{subequations}
\begin{align}
  A_0 & = \zeta_{-}-\frac{\coeff{2}}{3}, \label{eqn:badRoot}\\
  A_{\pm} & = -\frac{\zeta_{-}}{2}-\frac{\coeff{2}}{3}\pm\frac{i\zeta_{+}}{2}\sqrt {3},\label{eqn:goodRoots}
\end{align}
\end{subequations}
where the following definitions have been used
\begin{align*}
  \zeta_{\pm} = & \  \frac{\eta}{6}\pm6\frac{3\coeff{1}-\coeff{2}^2}{9\eta}\\
  \eta  = & \ \bigg(   36\coeff{1}\coeff{2}-108\coeff{0}-8{\coeff{2}}^{3} \\ 
  & +12\sqrt {12\coeff{0}{\coeff{2}}^{3}-3{\coeff{1}}^{2}{
\coeff{2}}^{2}-54\coeff{2}\coeff{1}\coeff{0}+12{\coeff{1}}^{3}+81{\coeff{0}}^{2}} \bigg)^{1/3} .
\end{align*}
Note that the principle branches of the square and cube root functions are used. This choice, as well as the choice of notation in \eqref{eqn:badRoot} and \eqref{eqn:goodRoots}, is made to specifically identify the $A_0$ root of \eqref{eqn:charpol}, which illustrates particularly interesting and subtle behavior for the present numerical scheme as outlined in the following propositions. Note also that $A_0$ as defined in \eqref{eqn:badRoot} is always real valued for $\Gamma>0$, $\Lambda>0$, and $\Omega\in \mathbb{R}$.

\begin{myproposition} The second-order accurate recursive convolution algorithm of \eqref{eqns:RC2} admits exponential growth provided $\gamma\ne0$ and $\omega\ne0$. 
\label{prop:RC2Growth}
\end{myproposition}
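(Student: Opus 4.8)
The plan is to argue directly from the amplification polynomial \eqref{eqn:charpol}: the scheme admits exponential growth exactly when that cubic has a root with $|A|>1$, so it suffices to produce a single such root. Writing $p(A)=A^3+\coeff{2}A^2+\coeff{1}A+\coeff{0}$, note that $p$ has positive leading coefficient, hence $p(A)\to+\infty$ as $A\to+\infty$. Consequently, if I can find admissible parameters for which $p(1)<0$, the intermediate value theorem immediately yields a real root in $(1,\infty)$, i.e.\ a mode growing like $A^n$. This real root is the one singled out as $A_0$ in \eqref{eqn:badRoot}.

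First I would evaluate $p(1)=1+\coeff{2}+\coeff{1}+\coeff{0}$. Abbreviating the (nonnegative) spatial contribution by $S\equiv\sum_{d=1}^{\mathcal D}16\lambda_d^2\sin^2(\xi_d/2)$, the constant terms and the $e^{-\Gamma}$ terms cancel in pairs and one is left with the compact expression
\[ p(1)=(S+\Omega^2)\bigl(1-e^{-\Gamma}\bigr)-\tfrac12\,\Omega^2\Gamma\bigl(1+e^{-\Gamma}\bigr). \]
Since $S\bigl(1-e^{-\Gamma}\bigr)\ge0$, the spatial term can only make $p(1)$ larger, so the most dangerous mode is the spatially constant one, $\xi_d=0$ for all $d$, where $S=0$. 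Restricting to this mode gives $p(1)=\Omega^2 g(\Gamma)$ with $g(\Gamma)=(1-e^{-\Gamma})-\tfrac12\Gamma(1+e^{-\Gamma})$.

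The next step is to show $g(\Gamma)<0$ for every $\Gamma>0$. A direct computation gives $g(0)=0$, $g'(\Gamma)=\tfrac12\bigl[(1+\Gamma)e^{-\Gamma}-1\bigr]$ so that $g'(0)=0$, and $g''(\Gamma)=-\tfrac12\Gamma e^{-\Gamma}<0$ for $\Gamma>0$; two integrations from $0$ then force $g(\Gamma)<0$ for all $\Gamma>0$. Therefore $p(1)=\Omega^2 g(\Gamma)<0$ precisely when $\Omega\ne0$ and $\Gamma\ne0$, i.e.\ when $\omega_p\ne0$ and $\gamma\ne0$ — exactly the hypotheses of the proposition — and the cubic acquires a real root $A_0>1$, which is the asserted exponential growth. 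By continuity of $\coeff{0},\coeff{1},\coeff{2}$ in $\xi_d$, the inequality $p(1)<0$ survives on a band of small $|\xi_d|$, so the growth is not confined to the single Fourier mode $\xi_d=0$.

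I do not anticipate a genuine obstacle: everything reduces to the elementary sign analysis of $g$, together with the behaviour of the cubic at infinity. The only point deserving emphasis is conceptual — that this result coexists with the formal (von Neumann) stability discussed above, because the growth is extremely weak. Indeed $g(\Gamma)\sim-\Gamma^3/12$ and $p'(1)\sim\Omega^2$ as $\Gamma\to0^+$, so the offending root satisfies $A_0-1\sim\Gamma^3/12=(\gamma\Delta t)^3/12$; the per-unit-time growth rate $\Delta t^{-1}\log A_0$ is then $O(\Delta t^2)$ and vanishes as $\Delta t\to0$, consistent with the remarks preceding the statement. Finally, to match the labelling in \eqref{eqn:badRoot}, one checks that the root exceeding $1$ is indeed $A_0$ and not a member of the pair $A_\pm$: once $A_0>1$, the product relation $A_0A_+A_-=-\coeff{0}=e^{-\Gamma}$ forces $A_+A_-=e^{-\Gamma}/A_0<1$, so the conjugate pair $A_\pm$ lies strictly inside the unit disk and the growth is carried by $A_0$ alone.
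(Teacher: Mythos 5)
Your argument is correct, and it takes a genuinely different---and more rigorous---route than the paper. The paper's proof sets $\xi_d=0$ and then simply \emph{plots} the magnitudes of the three roots $A_0$, $A_\pm$ over the $(\Omega,\Gamma)$ plane (Figure~\ref{fig:RC2_exp}), reading off from the surface that $|A_0|>1$ away from the coordinate axes; the conclusion rests on numerical evaluation of the closed-form root formulas rather than on an analytic inequality. You instead work directly with the characteristic cubic $p(A)=A^3+\coeff{2}A^2+\coeff{1}A+\coeff{0}$ and observe that the sign of $p(1)$ decides everything: your simplification $p(1)=(S+\Omega^2)(1-e^{-\Gamma})-\tfrac12\Omega^2\Gamma(1+e^{-\Gamma})$ checks out against the stated coefficients (the constant and $e^{-\Gamma}$ blocks cancel exactly), the reduction $g(\Gamma)=(1-e^{-\Gamma})-\tfrac12\Gamma(1+e^{-\Gamma})$ with $g(0)=g'(0)=0$ and $g''<0$ is elementary and airtight, and the intermediate value theorem then hands you a real root in $(1,\infty)$ without ever invoking the cubic formula. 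This buys a genuine proof where the paper offers a computer-assisted observation, and as a bonus your expansion $g(\Gamma)\sim-\Gamma^3/12$ together with $p'(1)\to\Omega^2$ recovers, independently, the paper's later expansion $A_0=1+\Gamma^3/12+O(\Gamma^4)$ used in the stability argument of Proposition~\ref{prop:RC2Stab}. Two small caveats: the identification of the growing root with the branch labelled $A_0$ in \eqref{eqn:badRoot} via the product relation $A_0A_+A_-=e^{-\Gamma}$ only forces \emph{both} of $A_\pm$ inside the unit disk when they form a complex-conjugate pair (if they happened to be real you would only get that their product is less than one); this is immaterial to the proposition, which asks only for existence of a growing mode, but you should flag the hypothesis if you keep that remark. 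Also note the paper's $\omega\ne0$ is evidently $\omega_p\ne0$, which you have interpreted correctly as $\Omega\ne0$.
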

\begin{proof}

 To show the existence of exponentially growing modes it suffices to simplify Equation \eqref{eqn:charpol} by setting $\xi_d=0$ and looking only at the constant spatial modes\footnote{In practice the boundary conditions may or may not support an exponentiating constant mode; for example periodic conditions would support it while zero Dirichlet conditions would not. In practice, the dominant growing mode is the lowest frequency mode which also satisfies the boundary conditions.}. Surfaces of the magnitude of the three roots $A_0$ and $A_{\pm}$ are plotted in Figure~\ref{fig:RC2_exp}. In particular, the left-most plot shows clearly that $|A_0|>1$ except when $\Omega=0$ or $\Gamma=0$ (corresponding to $\omega=0$ or $\gamma=0$ in physical parameters). As a result, the constant mode is seen to admit exponentially growing solutions provided $\gamma\ne0$ and $\omega\ne0$.
\begin{figure}[hbt]
  \begin{center}
 \includegraphics[width=.325\textwidth]{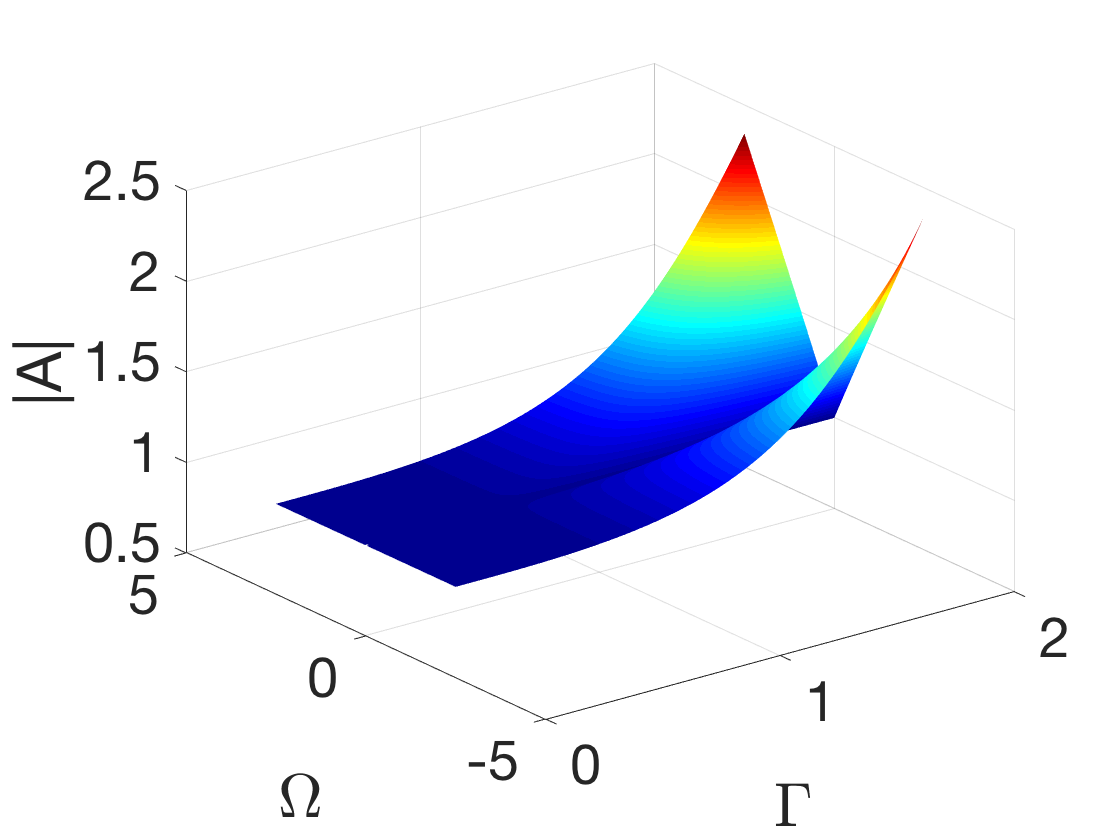} \hfill
  \includegraphics[width=.325\textwidth]{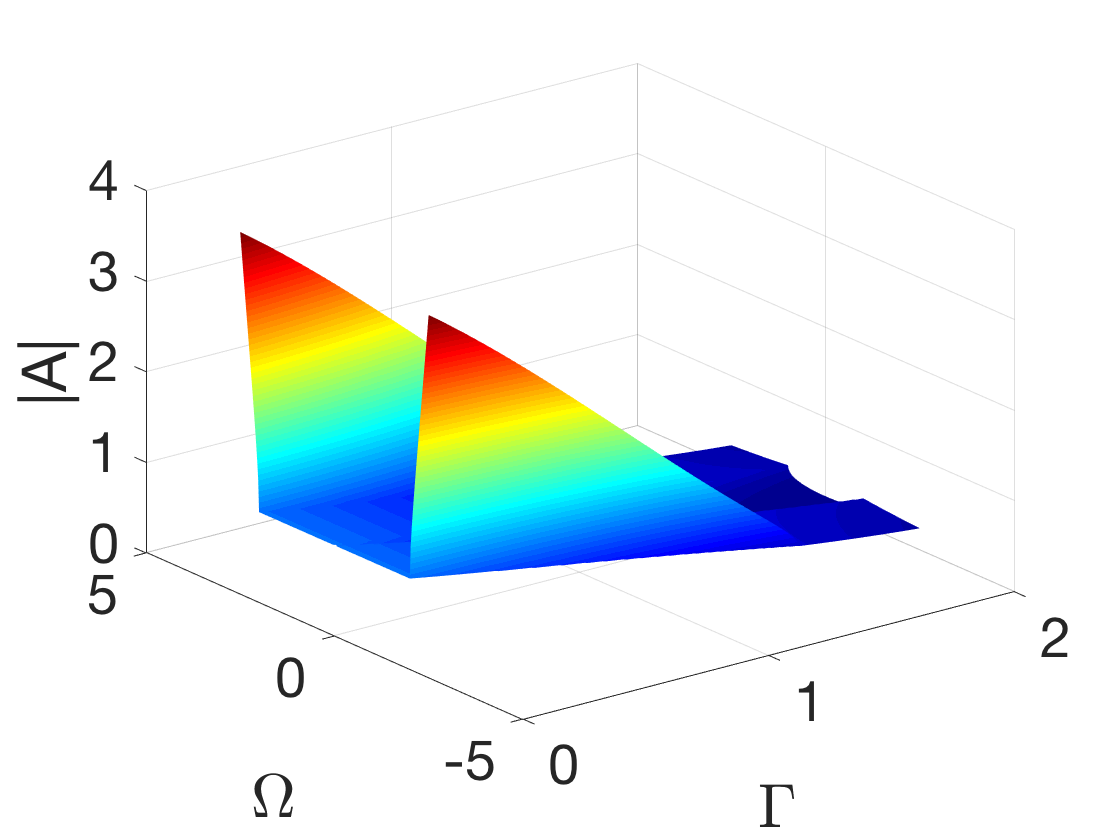} \hfill
  \includegraphics[width=.325\textwidth]{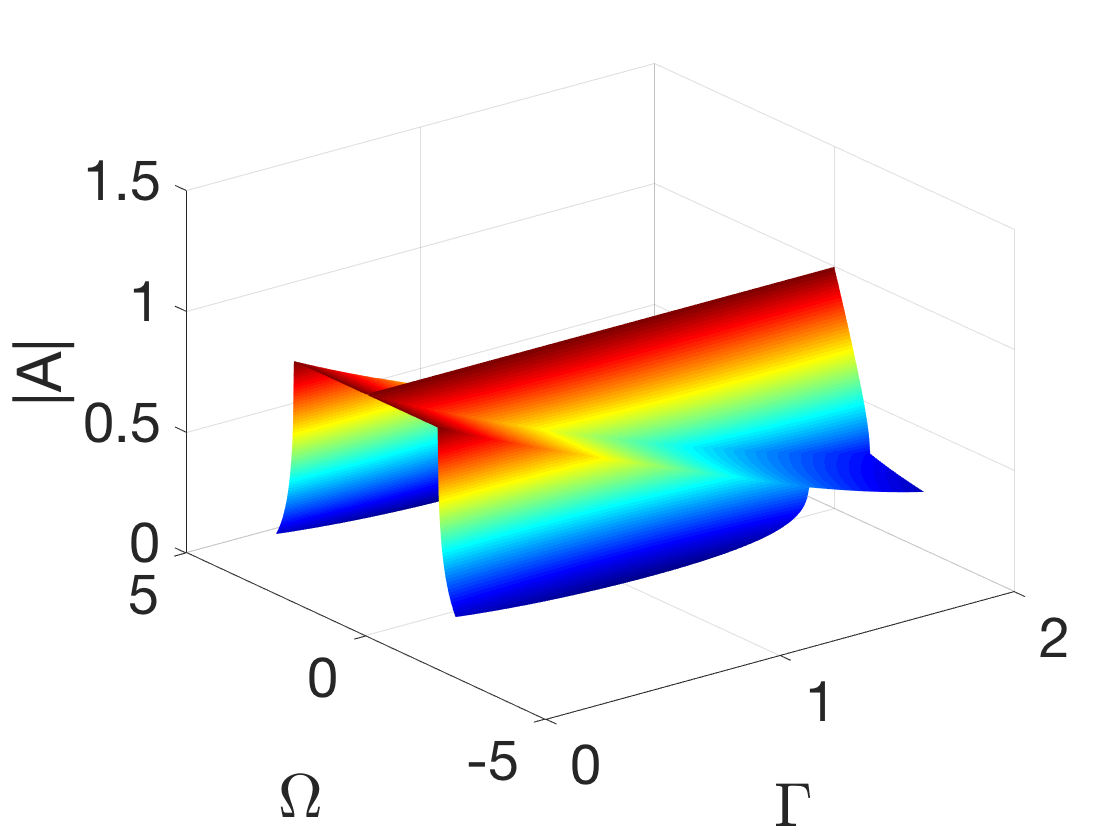}
  \caption{Surfaces of the magnitude of the amplification factor $A$ for the recursive convolution algorithm of \eqref{eqns:RC2} for the constant mode with $\xi_d=0$. From left to right are $|A_0|$, $|A_{+}|$ and $|A_{-}|$, and the left-most root shows that $|A_0|>0$ for $\gamma\ne0$ and $\omega\ne0$. As  result, the constant mode grows exponentially in time provided $\gamma\ne0$ and $\omega\ne0$.}
  \label{fig:RC2_exp}
  \end{center}
\end{figure}
\end{proof}

The fact that the second-order accurate recursive convolution algorithm admits exponential growth may be troubling, but it does not in itself imply that the scheme is unstable. In order to quantify this statement, one needs a precise definition of stability and here we adopt the following classical notion of stability~\cite{thomas1999}. 
\begin{definition}
  A numerical scheme defining a grid function $\mathbf{u}_{\jv}^n$, which is computed from its initial condition $\mathbf{u}_{\jv}^0$, is said to be stable with respect to the discrete norm $||\cdot||_h$ if there exists positive constants $\beta$ and $\kappa$ (independent of the grid spacing $h$ and time step $\Delta t$) such that
  \[
    || \mathbf{u}_{\jv}^n||_h \le \kappa e^{\beta t} || \mathbf{u}_{\jv}^0||_h
  \]
  for $0\le t \le T$,  $T$ is a given final time, and $h$ and $\Delta t$ are taken sufficiently small.
  \label{def:stab}
\end{definition}
This definition of stability, along with numerical consistency, is sufficient to guarantee convergence of a discrete approximation to the true solution for $t<T$ as the grid is refined. Importantly in the current discussion, Definition~\ref{def:stab} allows bounded exponential growth. Careful investigation of the roots of \eqref{eqn:charpol} leads to the following stability result.

\begin{myproposition} Given physical parameters $c$, $\omega_p^2/\epsilon_r$, and $\gamma$ and according to Definition \ref{def:stab}, a sufficient condition for stability of the second-order accurate recursive convolution algorithm of \eqref{eqns:RC2} is 
\begin{align}
    \Lambda + \frac{\Omega^2}{4} \equiv   c\Delta t\sqrt{\sum_{d=1}^{\mathcal{D}}\frac{1}{h_d^2} } + \frac{\omega_p^2 \Delta t^2}{ 4 \epsilon_r} < 1, 
    % c\Delta t\sqrt{\sum_{d=1}^{\mathcal{D}}\frac{1}{h_d^2} } \le \Lambda_0 < 1. %+\frac{\Omega^2}{4}\le1 \qquad \hbox{and} \qquad \Gamma \le 1,
    \label{eq:RC2CFL}
\end{align}
%where $\Lambda_0$ is a constant separated from $1$. 
Recall that $\mathcal{D}$ is the spatial dimension, and $h_d$ is the grid spacing in the $d$ coordinate direction, and so \eqref{eqn:charpol} is a typical CFL-like constraint on the time step.
%Note that in order for $h_d$, $d = 1, 2, \dots, \mathcal{D}$, and therefore $\Delta t$ to be sufficiently small by Definition \ref{def:stab}, we require $ | \Omega | \leq 2 \sqrt{ 1 - \Lambda }$. 
\label{prop:RC2Stab}
\end{myproposition}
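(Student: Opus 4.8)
The plan is to run a von Neumann (discrete Fourier--Laplace) analysis directly on the characteristic cubic \eqref{eqn:charpol}. Because Definition~\ref{def:stab} permits bounded exponential growth, it suffices to show that under the CFL condition \eqref{eq:RC2CFL} every amplification factor $A$ solving \eqref{eqn:charpol} satisfies $|A|\le 1+\beta\,\Delta t$ for a constant $\beta$ independent of $h$ and $\Delta t$, and that any root of modulus exceeding one is simple. Then each separable mode \eqref{eqn:2ndAnzatz} obeys $|A|^n\le(1+\beta\Delta t)^n\le e^{\beta t}$ for $t=n\Delta t\le T$, and summing the modes over the wave numbers $\kv$ through a discrete Parseval identity yields $\|\mathbf{E}^n\|_h\le\kappa e^{\beta t}\|\mathbf{E}^0\|_h$, which is exactly the bound required.

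The key algebraic lever is the constant term. Since \eqref{eqn:charpol} is monic with $\coeff{0}=-e^{-\Gamma}$, the product of the roots is $A_0 A_+ A_-=-\coeff{0}=e^{-\Gamma}$. Treating $A_\pm$ as the complex-conjugate pair identified in \eqref{eqn:goodRoots}, one has $|A_\pm|^2=A_+A_-=e^{-\Gamma}/A_0$, so the two physical modes automatically satisfy $|A_\pm|\le1$ as soon as the single real root obeys $A_0\ge e^{-\Gamma}$. This collapses the whole problem to a one-dimensional study of $A_0$: I would prove the two-sided bound $e^{-\Gamma}\le A_0\le 1+\beta\Delta t$, where the lower inequality controls $A_\pm$ via the product identity and the upper inequality controls the (physical) growth already exhibited in Proposition~\ref{prop:RC2Growth}. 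Concretely, write $\sigma\equiv\sum_{d=1}^{\mathcal{D}}4\lambda_d^2\sin^2(\xi_d/2)$ for the symbol of the discrete Laplacian, which sweeps $[0,4\Lambda^2]$ as $\xi_d$ ranges over $[-\pi,\pi]$, so that \eqref{eq:RC2CFL} keeps $\sigma$ safely below $4$. In the nondispersive limit $\Gamma=\Omega=0$ the cubic factors as $(A-1)\big(A^2+(\sigma-2)A+1\big)$, whose quadratic factor carries its roots on the unit circle precisely when $\sigma\le4$ (the classical leapfrog condition $\Lambda<1$), the simple root $A=1$ being the numerical mode introduced by $\psiv$. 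I would then treat $\Gamma=\gamma\Delta t$ and $\Omega=\mathcal{O}(\Delta t)$ as a small perturbation: the unit root moves to the real root $A_0=1+\mathcal{O}(\Delta t)$ and the unit-circle pair moves strictly inside (the physical damping), consistent with $A_0 A_+ A_-=e^{-\Gamma}<1$. Locating $A_0$ rigorously is done by evaluating $P(A)=A^3+\coeff{2}A^2+\coeff{1}A+\coeff{0}$ at the anchor points $A=1$ and $A=e^{-\Gamma}$ and exploiting the resulting sign pattern together with the monotonicity of $P$ on the relevant interval; the requisite expansions in $\Gamma$ and $\Omega$ are routine.

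The main obstacle is carrying out this root location \emph{uniformly} in $\xi$: turning the clean but crude CFL \eqref{eq:RC2CFL} into a guarantee that holds for every grid wave number. The delicate point is the transition at which the complex pair $A_\pm$ could coalesce into two real roots, one of which might exceed unity; the product identity alone bounds $A_+A_-$ but not the individual real roots, so here one must invoke the CFL, equivalently the sign of the cubic discriminant (the quantity under the radical in $\eta$) or a Schur--Cohn/Jury test applied to the quadratic factor $A^2+pA+q$ with $q=e^{-\Gamma}/A_0$, to show that the pair stays strictly complex, hence modulus-bounded, for all $\sigma\le 4\Lambda^2$. The second technical demand is securing the upper bound $A_0\le1+\beta\Delta t$ with $\beta$ independent of $h$ and $\Delta t$, so that the admitted growth rate is genuinely $\mathcal{O}(1)$ and vanishes as $\Delta t\to0$; this is what makes the scheme stable in the sense of Definition~\ref{def:stab} rather than merely convergent on a fixed grid.

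Finally, the stability conclusion closes cleanly: whenever $\gamma,\omega\ne0$ the pair $A_\pm$ lies strictly inside the disk and $A_0$ is a simple real root, so there is no defective coincidence on the unit circle and no spurious algebraic (polynomial-in-$n$) growth, and the three factors $|A|\le1+\beta\Delta t$ combine to give the bound of Definition~\ref{def:stab} with $\kappa$ and $\beta$ independent of the discretization.
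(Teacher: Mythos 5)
Your route is genuinely different from the paper's. The paper splits the three roots into $A_0$ and $A_{\pm}$, establishes $\max|A_{\pm}|\le 1$ by numerically computing the unity iso-surface $\mathcal{S}_1$ in $(\Omega,\Gamma,\Lambda)$-space and observing that it contains the simpler surface $\Lambda+\Omega^2/4=1$, and then handles $A_0$ by the local expansion $A_0=1+\Gamma^3/12+O(\Gamma^4)$, which gives growth $O(\dt^2)$ in physical time and hence stability in the sense of Definition~\ref{def:stab}. Your Vieta reduction $A_0A_+A_-=e^{-\Gamma}$, together with the clean factorization $(A-1)\bigl(A^2+(\sigma-2)A+1\bigr)$ in the nondispersive limit, is an attractive attempt to replace the paper's graphical containment argument with algebra, and your target bound $|A|\le1+\beta\dt$ is exactly what Definition~\ref{def:stab} needs.

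However, there is a genuine gap in the central mechanism. The identity $|A_\pm|^2=A_+A_-=e^{-\Gamma}/A_0$ requires $A_\pm$ to be a genuine complex-conjugate pair, and your proposed fix --- invoke the CFL to show the pair stays strictly complex for all $\sigma\in[0,4\Lambda^2]$ --- is false precisely at the modes that matter most. At $\sigma=0$ with $\Omega=0$ the cubic is exactly $(A-1)^2(A-e^{-\Gamma})$ with all three roots real; perturbing in $\Omega$, a local analysis near $A=1$ gives the quadratic $(1-e^{-\Gamma})\epsilon^2+\Omega^2(1-\tfrac{\Gamma}{2})\epsilon-\tfrac{\Omega^2\Gamma^3}{12}\approx0$ for $A=1+\epsilon$, whose discriminant $\Omega^4(1-\Gamma/2)^2+\tfrac{1}{3}\Omega^2\Gamma^4$ is strictly positive, so the double root at $1$ splits into \emph{two real roots}, approximately $1+\Gamma^3/12$ and $1-\Omega^2/\Gamma$, whenever $\Omega\ll\Gamma$. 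More globally, the discriminant of the residual quadratic at $\sigma=0$ behaves like $\Gamma^2-4\Omega^2$, so for $\gamma\gtrsim 2\omega_p/\sqrt{\epsilon_r}$ --- which includes the paper's own strongly damped 1D test case $\gamma=10$, $\omega_p=3$, $\epsilon_r=1$ --- the ``pair'' is real at and near the constant mode, the product identity bounds only the product and not the individual roots, and your reduction collapses exactly where the dangerous root $A_0>1$ lives. The conclusion is still true (the second real root is $1-\Omega^2/\Gamma<1$ and the third is near $e^{-\Gamma}$), but proving it requires a separate real-root location argument (sign analysis of $P$ at anchor points, or a Schur--Cohn/Jury test on the full cubic) in the real-pair regime; as written, your plan has no mechanism covering that regime. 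A secondary point worth flagging: the split of the double root at $A=1$ is only $O(\Omega^2/\Gamma)$, which shrinks as $\dt\to0$, so obtaining the constant $\kappa$ in Definition~\ref{def:stab} uniformly in $\dt$ from a nearly defective amplification matrix also needs justification beyond simplicity of the roots at fixed $\dt$.
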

\begin{proof}
In investigating Proposition~\ref{prop:RC2Stab} the two sets of roots of \eqref{eqn:charpol}, $A_0$ in \eqref{eqn:badRoot} and $A_{\pm}$ in \eqref{eqn:goodRoots}, are discussed independently. First consider the pair $A_{\pm}$, which are functions of the discrete wave numbers $\xi_d$. After maximizing over those wave numbers (the maxima occurring with $\xi_d=\pm\pi$ or $\xi_d=0$), the unity iso-surface of the maximum of $|A_{\pm}|$ in the space of dimensionless parameters $\Omega$, $\Gamma$, and $\Lambda$ defined by the set
\begin{align}
\mathcal{S}_1 = \bigg\{ ( \Omega, \Gamma, \Lambda ) \ : \  \underset{ \pm}{ \max } \big|A_{\pm}(\Omega, \Gamma, \Lambda) \big| = 1 \bigg\} ,  \label{eqn:isoset}
\end{align}
is plotted in Figure~\ref{fig:RC2_Apm_iso}. 
\begin{figure}
  \begin{center}
  \includegraphics[width=.325\textwidth]{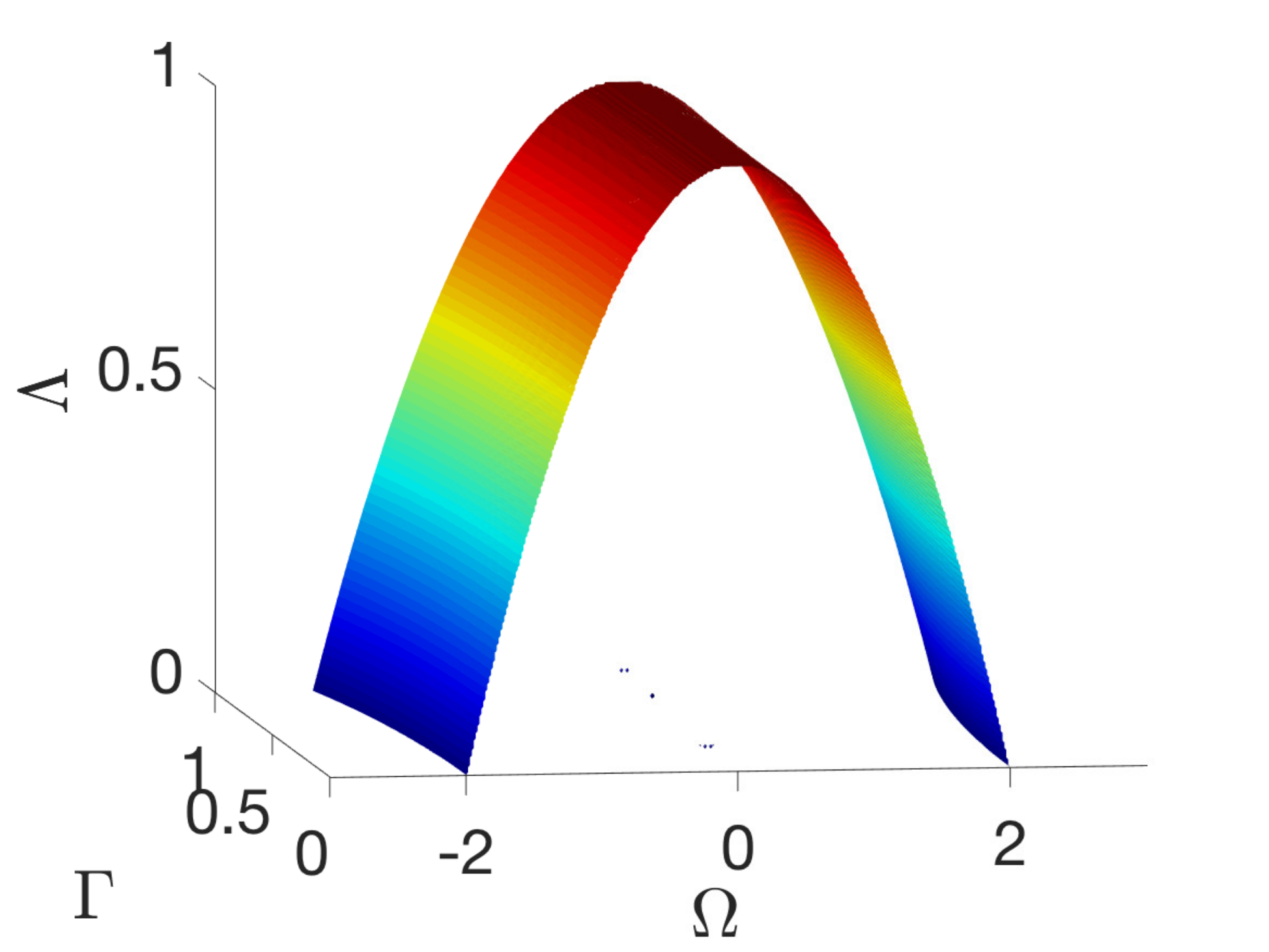} \hfill
  \includegraphics[width=.325\textwidth]{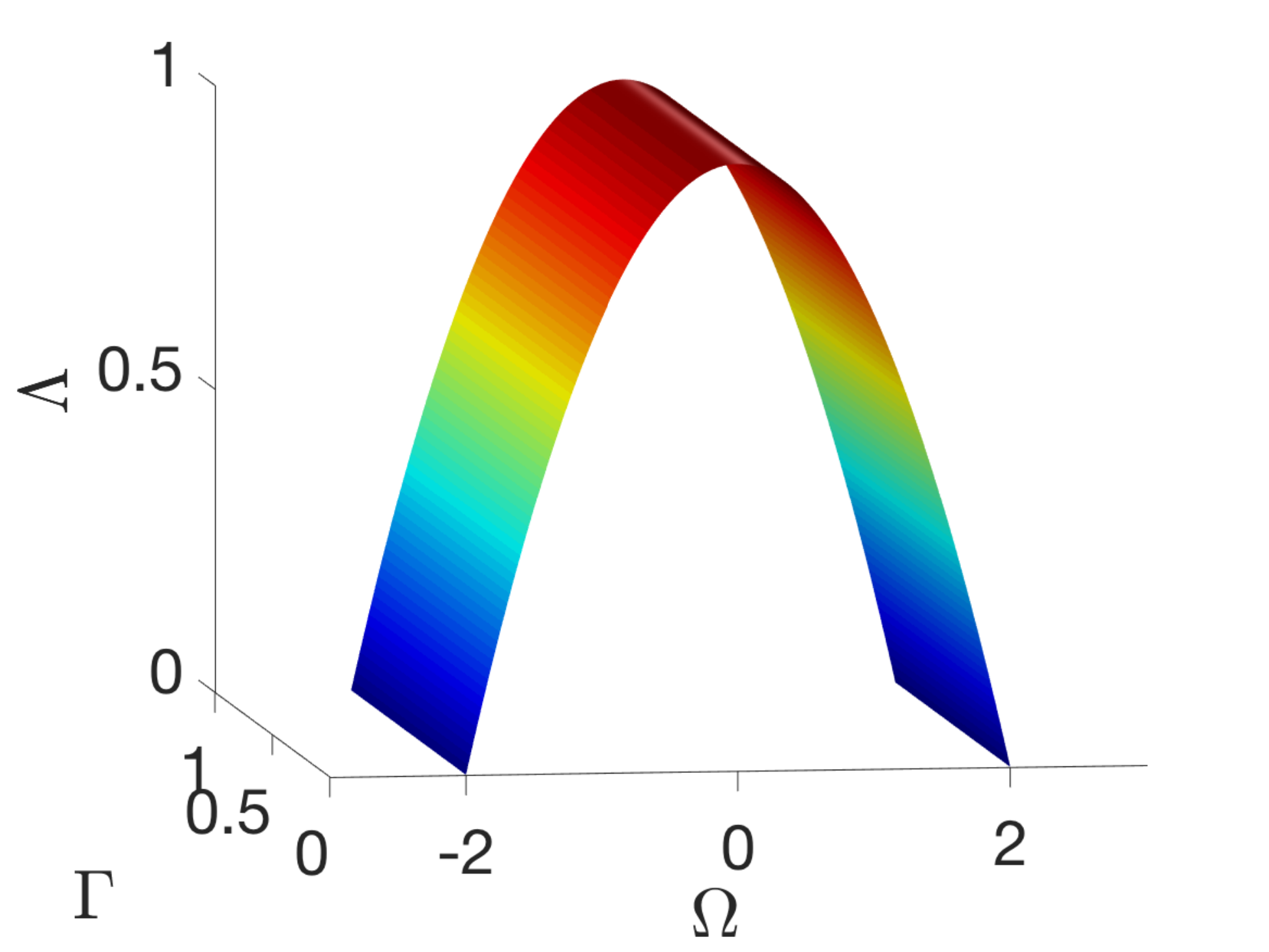} \hfill
  \includegraphics[width=.325\textwidth]{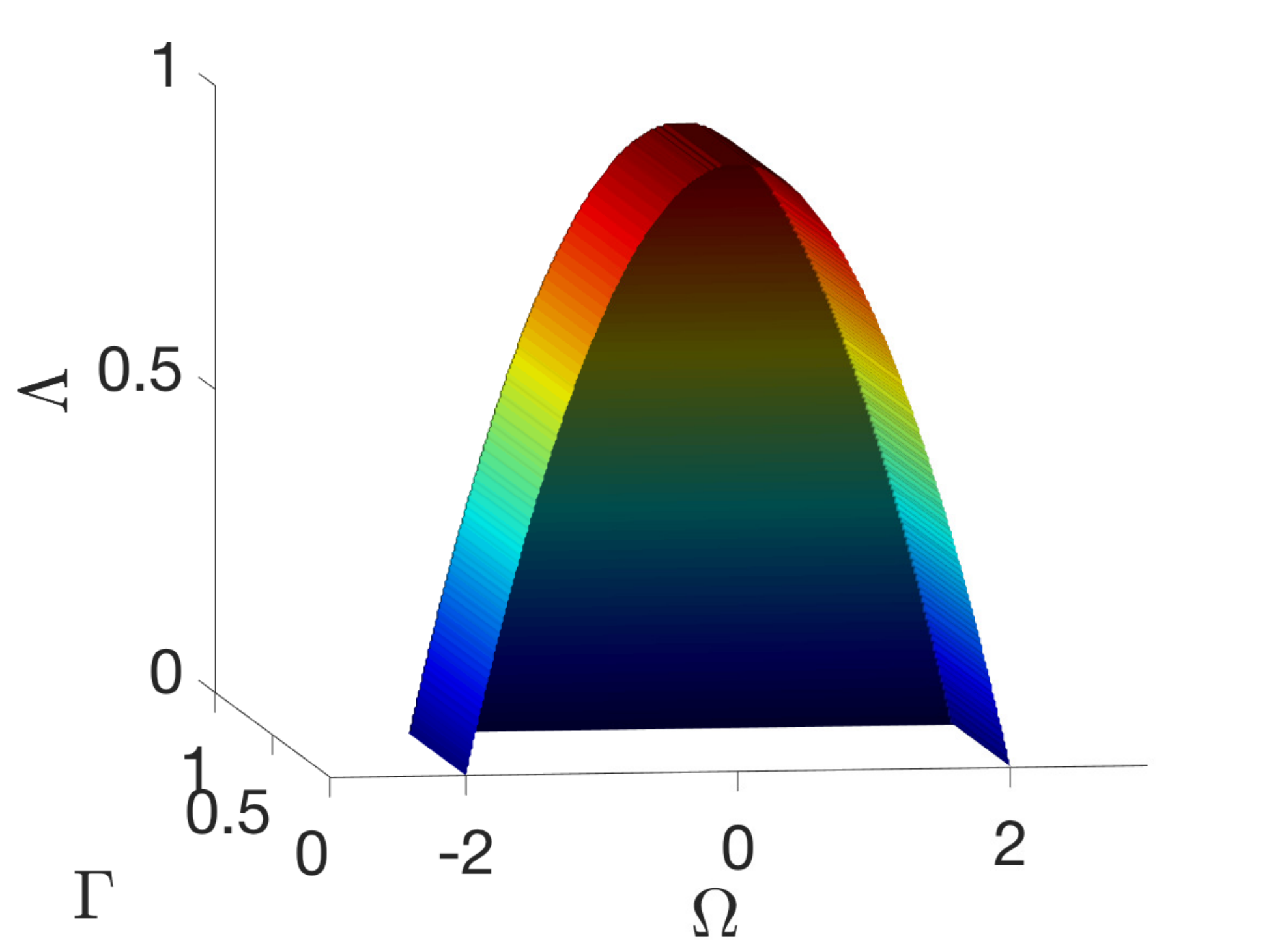}
  \caption{At left is the unity iso-surface $\mathcal{S}_1$, defined in \eqref{eqn:isoset}, of the maximum of $|A_{\pm}|$ in the space of dimensionless parameters $\Omega$, $\Gamma$, and $\Lambda$. Below this unity iso-surface, the maximum $\max |A_{\pm}|\le1$ and so those roots correspond to modes in the approximation that do not grow in time.  In center is a simplified isosurface $\mathcal{S}_2$, defined in \eqref{eqn:isoset2} by $\Lambda+\Omega^2/4=1$. At right is the complete bound \eqref{eq:RC2_time_step}, which is used in the time-step determination.}
  \label{fig:RC2_Apm_iso}
  \end{center}
\end{figure}
A sufficient condition for solutions of the second-order algorithm \eqref{eqns:RC2} to be stable is that they do not admit exponential growth in time. For the solutions corresponding to the amplification factors $A_{\pm}$, we therefore seek $\Delta t$ such that $ \underset{ \pm}{ \max } \big|A_{\pm}(\Omega, \Gamma, \Lambda) \big|  < 1$, which forbids the possibility of such growth. This region is bounded above by the isosurface $\mathcal{S}_1$ in Figure~\ref{fig:RC2_Apm_iso}, and a necessary and sufficient condition for stability is therefore that the parameters $(\Omega, \Gamma, \Lambda)$ lie within it. To simplify this condition, note that the region bounded by $\mathcal{S}_1$ contains the isosurface
\begin{align}
\mathcal{S}_2 =  \bigg\{ ( \Omega, \Gamma, \Lambda ) \ : \  \Lambda+\Omega^2/4=1 \bigg\}. \label{eqn:isoset2}
\end{align}
The set of $(\Omega, \Gamma, \Lambda)$ bounded above by $\mathcal{S}_2$ is therefore stable, and the bound $  \Lambda+\Omega^2/4 \leq 1$ provides a simplified and more easily manipulated sufficient stability criterion. To complete the proof of Proposition~\ref{prop:RC2Stab}  for the solutions corresponding to amplification factors $A_{\pm}$, we recall that $\Omega\sim\dt$, and because by assumption $\Lambda \le \Lambda_0$, we are interested in the limit $\dt\to 0$. As a result, we are free to pick an $\Omega_0>0$ sufficiently small such that $\max(|A_{\pm}|)<1$ provided $|\Omega|<\Omega_0$ and $\Lambda<1-\Omega_0^2$. The latter constant is easily identified as $\Lambda_0=1-\Omega_0^2$, which guarantees that $(\Omega, \Gamma, \Lambda) \in \mathcal{S}_2 \subset \mathcal{S}_1$ and completes the consideration of stability for the solutions corresponding to the amplification factors $A_{\pm}$. 

Moving on to $A_0$, we are faced with the more subtle situation where $|A_0|>1$ as discussed in Lemma~\ref{prop:RC2Growth}. As before, $\dt\to 0$ which implies $\Gamma\sim\dt$ and $\Omega\sim\dt$. We now expand the root $A_0$ about $\Gamma = 0$ to illustrate its size. Note that some care must be taken due to the fact that when $\Omega = 0$ and $\Gamma > 0$, the root $A_0 = 1$, while there exists a branch point of $A_0$ at the point $(\Gamma, \Omega) = (0,0)$. Taking a Taylor expansion of $A_0$ about $\Gamma=0$ for $ 0 <  | \Omega | < 2$ and using the fact that $\Gamma>0$ yields 
\[
  A_0=1+\frac{\Gamma^3}{12}+O(\Gamma^4).
\]
This expansion implies that the spurious exponential growth takes the form $A_0^n \sim e^{n \gamma^3 \dt^3}$, or equivalently $A_0^n \sim e^{T \dt^2}$ where the final time $T=n\dt$. That is to say that the exponential growth rate is $O(\dt^3)$ in time-step, or $O(\dt^2)$ in physical time. Integrating to a final time as given in Definition~\ref{def:stab} then yields bounded bounded exponential growth tending to zero as $\dt\to 0$. This, along with the fact that $|A_{\pm}|<1$ provided $\Lambda<1$ and $\dt$ is sufficiently small establishes finite-time stability as described by Definition~\ref{def:stab}.
\end{proof}

% Given the subtleties of numerical stability as presented in Propositions~\ref{prop:RC2Growth} and~\ref{prop:RC2Stab},

We now provide a short discussion relating to the choice of time step for physically reasonably values of $\Lambda$, $\Omega$, and $\Gamma$. As discussed in Proposition~\ref{prop:RC2Growth}, the largest amplification factor $|A|$ is always greater than unity and so numerical solutions would exhibit exponential growth. However, the growth associated with $A_0$ as $\dt\to 0$ is rather weak, with $|A_0|=1+O(\dt^3)$, in comparison to the more rapid growth characterized by a classical CFL violation when $\Lambda+\Omega^2/4>1$ for which $|A_{\pm}|=1+O(1)$. The latter bound is plotted in the center panel of Figure~\ref{fig:RC2_Apm_iso}, and is slightly more restrictive but algebraically simpler version of the true iso-surface shown in the leftmost panel of Figure~\ref{fig:RC2_Apm_iso}. In selecting a time step $\dt$, it is therefore practically useful to distinguish between these two types of exponentiation, i.e. the slow growth associated with the $A_0$ and fast growth associated with $A_{\pm}$. To that end, Figure \ref{fig:second_order_stab} shows contours of the maximum amplification factor for the spatially constant mode as a function of $\Gamma$ and $\Omega$ (recall that the constant mode is associated with the maximum $A_0$). In this plot, the transition from slow to fast growth is clearly apparent near $\Omega\approx 2$.
%
%The high-level conclusion of Propositions~\ref{prop:RC2Growth} and~\ref{prop:RC2Stab} is that while the second-order scheme may exhibit undesirable exponential growth, the numerical solution is {\it convergent} at the correct order of accuracy (which is illustrate in section \ref{section:six} below). Nonetheless, Figure~\ref{fig:second_order_stab} shows the transition from the relatively slow growth associated with $A_0$ when $|\Omega|\lesssim2$, to much more rapid growth associate with $A_{\pm}$ when $\Omega \gtrsim 2$.
%
\begin{figure}
\begin{center}
\includegraphics[width=.5\textwidth]{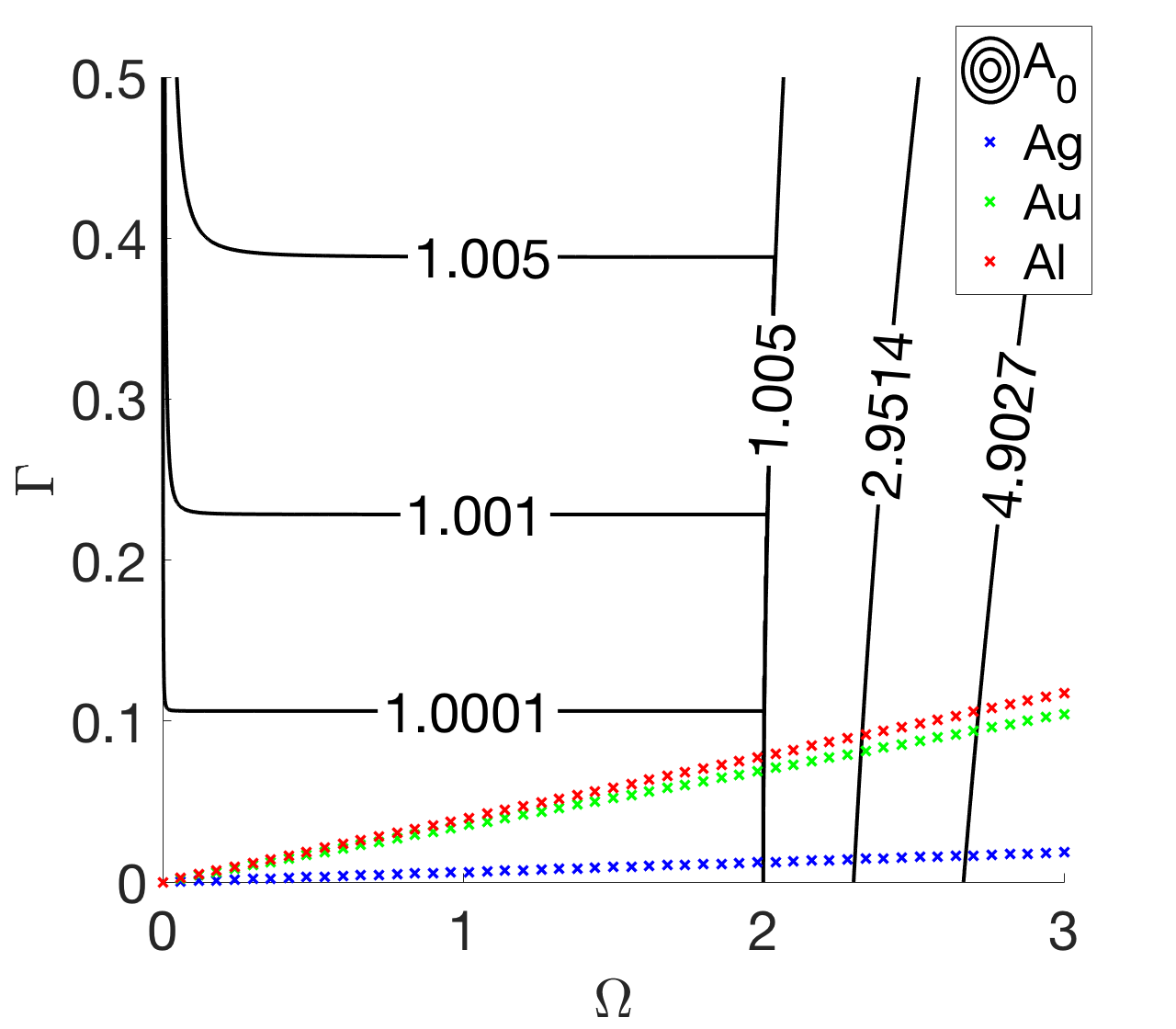}
\caption{Contours of the amplification factor $A_0$ for the second-order scheme and the spatially constant mode. The straight lines correspond to several common Drude models for metals (from top to bottom, Aluminum \cite{blaber_2009}, Silver \cite{yang_2015}, and Gold \cite{olmon_2012} as time step $\dt$ is refined.} %{\bf Note that in each case when $\Delta t \simeq 1$, the typical physical parameters $\Gamma \simeq 10^{-3}$ and $\Omega \simeq 10^{-1}$ are much closer to the origin than indicated.}}
\label{fig:second_order_stab} 
\end{center}
\end{figure}
In addition, reference lines showing the trajectory followed as $\dt\to 0$ for physical parameters corresponding to gold, silver, and aluminum are also included. These lines indicate that for physically realistic metals, the transition from fast to slow exponential growth happens around $\Gamma\approx .05$. Nonetheless it is prudent to exclude arbitrarily large $A_0$ in the event that an extremely large value of $\gamma$ is used and so we (somewhat arbitrarily) impose $\Gamma<0.5$. Taken together the bounds give a region or practical stability, and so the time-step for the computations presented in Section~\ref{section:six} is chosen to satisfy 
\begin{equation}
  \Lambda+\Omega^2/4\le1, \qquad \hbox{and} \qquad \Gamma \leq 0.5.
  \label{eq:RC2_time_step}
\end{equation}
A surface illustrating this bound is shown in the far right of Figure~\ref{fig:RC2_Apm_iso}. We summarize the appropriate choice of $\Delta t$ for a stable second-order algorithm with the following proposition.

\begin{myproposition} 
Given physical parameters $c$, $\omega_p^2/\epsilon_r$, and $\gamma$, define $\Delta t_m$ to be the smallest root of the quadratic polynomial equation
\begin{align}
   \frac{\omega_p^2 \Delta t^2 }{\epsilon_r} + c  \Delta t  \sqrt{\sum_{d=0}^{\mathcal{D}}\frac{1}{h_d^2} } - 1 = 0. 
\end{align}
A practical sufficient condition for stability of the second-order accurate recursive convolution algorithm of \eqref{eqns:RC2} is given by
\begin{align}
\Delta t = \min \left\{  \Delta t_m ,  \frac{0.5}{\gamma} \right \} .
\end{align}
\end{myproposition}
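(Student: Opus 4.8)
The plan is to reduce the claim to the practical stability criterion \eqref{eq:RC2_time_step}, established in the discussion following Proposition~\ref{prop:RC2Stab}: the scheme \eqref{eqns:RC2} is stable in the sense of Definition~\ref{def:stab} provided the dimensionless parameters satisfy $\Lambda + \Omega^2/4 \le 1$ and $\Gamma \le 0.5$. Using the scalings \eqref{eqn:stabscaling}, I would rewrite each of these two inequalities as an explicit constraint on $\Delta t$ and then verify that $\Delta t = \min\{\Delta t_m, 0.5/\gamma\}$ satisfies both simultaneously. The two constraints decouple cleanly, since the quadratic defining $\Delta t_m$ does not involve $\gamma$, whereas the second constraint involves only $\gamma$.

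The damping constraint is immediate: since $\Gamma = \gamma\,\Delta t$, the bound $\Gamma \le 0.5$ is equivalent to $\Delta t \le 0.5/\gamma$, which holds by construction of the minimum. Together with the CFL bound below — which in particular forces $\Omega^2/4 \le 1$, i.e. $\Omega \le 2$ — this places the spatially constant mode in the slow-growth regime where, by the expansion in the proof of Proposition~\ref{prop:RC2Stab}, $A_0 = 1 + \Gamma^3/12 + O(\Gamma^4)$; the bound $\Gamma \le 0.5$ then keeps this $O(\Delta t^3)$ amplification controlled while excluding the large-$\gamma$ regime in which $A_0$ could become large.

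The CFL constraint is the substantive step. Writing $S = \sqrt{\sum_{d=1}^{\mathcal{D}} 1/h_d^2}$ and using \eqref{eqn:stabscaling}, the polynomial on the left-hand side of the stated quadratic,
\[
  F(\Delta t) \equiv \frac{\omega_p^2\,\Delta t^2}{\epsilon_r} + c\,\Delta t\,S - 1,
\]
equals $\Lambda + \Omega^2 - 1$. I would observe that $F(0) = -1 < 0$, that $F$ is strictly increasing and convex for $\Delta t \ge 0$ with $F \to +\infty$, and that the product of its two roots equals $-\epsilon_r/\omega_p^2 < 0$, so $F$ has exactly one positive root, the physically meaningful $\Delta t_m$ (the negative root being discarded). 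Consequently $F(\Delta t) \le 0$ if and only if $0 \le \Delta t \le \Delta t_m$, i.e. $\Lambda + \Omega^2 \le 1$ on this interval. The point to emphasize is that the stated quadratic encodes the slightly more conservative bound $\Lambda + \Omega^2 \le 1$ rather than the sharp requirement $\Lambda + \Omega^2/4 \le 1$ of \eqref{eq:RC2CFL}; since $\Omega^2 \ge \Omega^2/4$, the former implies the latter, so $\Delta t \le \Delta t_m$ is indeed sufficient, if mildly pessimistic, for the CFL inequality in \eqref{eq:RC2_time_step}.

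Combining the two pieces, the choice $\Delta t = \min\{\Delta t_m, 0.5/\gamma\}$ forces $\Delta t \le \Delta t_m$ and $\Delta t \le 0.5/\gamma$, hence both $\Lambda + \Omega^2/4 \le 1$ and $\Gamma \le 0.5$; invoking \eqref{eq:RC2_time_step} together with the root analysis of Proposition~\ref{prop:RC2Stab} — wherein the $A_{\pm}$ roots do not grow under the CFL bound and the $A_0$ root exhibits only the bounded growth permitted by Definition~\ref{def:stab} — then delivers the asserted stability. I expect the main obstacle to be bookkeeping rather than analysis: one must reconcile the factor-of-four mismatch between the stated quadratic and the sharp CFL surface by arguing conservativeness, and one must pin down the reading of ``smallest root'' as the unique positive root, so that $\Delta t_m > 0$ is well defined.
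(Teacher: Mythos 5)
Your proposal is correct and follows essentially the same route as the paper, whose entire proof is the one-line reduction to the practical bound \eqref{eq:RC2_time_step} via the substitutions \eqref{eqn:stabscaling}. You additionally catch two points the paper elides: the stated quadratic corresponds to $\Lambda+\Omega^2\le1$ rather than $\Lambda+\Omega^2/4\le1$ (which you correctly resolve by noting the former is merely more conservative, hence still sufficient), and ``smallest root'' must be read as the unique positive root since the product of the roots is $-\epsilon_r/\omega_p^2<0$.
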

The proof follows from \eqref{eq:RC2_time_step} with the substitutions $\Lambda = c\Delta t\sqrt{\sum_{d=0}^{\mathcal{D}}\frac{1}{h_d^2} } $, $\Omega = \Delta t \omega_p / \sqrt{\epsilon_r}$, and $\Gamma = \Delta t \gamma$ given in \eqref{eqn:stabscaling}. 

%%%%%
%%%%%
%%%%%
\subsection{Fourth-Order Accurate Scheme for the Drude Model} 
\label{section:fourthorderdrude}
Consider again the initial boundary value problem \eqref{eqn:governingE} with $\widehat{\eta}(\omega)$ and $\eta(\tau)$ given by the Drude model in Table \ref{table:dispersionmodels}. Previously in Section~\ref{section:secondorderdrude}, discussion of the convolution $\eta * E$ led to the introduction of the auxiliary quantity $\psiv$ as defined in Equation \eqref{eqn:psidef}. Similarly, discussion of the fourth-order accurate discretization will naturally lead to the convolution $\eta  * \eta* E$, can be found by a straightforward computation in the frequency domain
\begin{align}
 \widehat{\eta}(\omega)^2 &  =    \frac{ \epsilon_0^2 \omega_p^4  \omega^2}{ (\omega + i \gamma)^2 } \\
    & = \epsilon_0^2 \omega_p^4 \left( 1 + \frac{2 \gamma} {i \omega - \gamma} + \frac{\gamma^2}{(i \omega - \gamma )^2} \right). \nonumber
\end{align}
Transformation back to the time domain gives
\begin{align}
 \eta * \eta (\tau) = \epsilon_0^2 \omega_p^4 \left[   \delta(\tau) -  2   \gamma  e^{- \gamma \tau}  \Theta(\tau) +    \gamma^2 \ \tau  e^{- \gamma \tau}  \Theta(\tau) \right] ,
\end{align}
and therefore,
\begin{align}
 \label{eqn:etaconvo2} \eta * \eta * \mathbf{E}(\xv,t) & =  \epsilon_0^2 \omega_p^4 \bigg[  \mathbf{E}(\xv,t) -  2   \gamma  \int_0^{\infty} e^{- \gamma \tau} \mathbf{E}(\xv, t - \tau) d \tau  \\ 
 & \hspace{4cm}  +  \gamma^2  \int_0^{\infty}  \tau \ e^{- \gamma \tau} \mathbf{E}(\xv, t - \tau) d \tau \bigg]   \nonumber \\
 & =    \epsilon_0^2 \omega_p^4  \left[   \mathbf{E}(\xv,t) -  2   \gamma \psiv(\xv,t)  +  \gamma^2  \phiv(\xv,t) \right],   \nonumber
\end{align}
where the additional auxiliary field $\phiv$ is defined as
\begin{align}
  \phiv(\xv,t) =  \int_0^{\infty}  \tau \  e^{- \gamma \tau}  \mathbf{E}(\xv, t - \tau) d \tau .
\end{align}
Using this notation the time convolutions needed for the fourth-order scheme can be expressed as 
\begin{subequations}
\begin{align}
 ( \Delta +   \mu \eta \ *  ) \mathbf{E}(\xv,t)   = &  \Delta \mathbf{E}(x,t)  + \mu \epsilon_0  \omega_p^2 \left[   -  \mathbf{E}(\xv,t) +   \gamma \psiv(\xv,t) \right] , \label{eqn:4thorderpart}\\
\label{eqn:2ndorderpart} ( \Delta +   \mu \eta \ *  )^2 \mathbf{E}(\xv,t) =  & \ \Delta^2 \mathbf{E}(\xv,t)  +  2 \mu \epsilon_0 \omega_p^2 \left[ -  \Delta \mathbf{E}(\xv,t) +  \gamma \Delta \psiv(\xv,t) \right]    \\
 	& \quad + \mu^2 \epsilon_0^2 \omega_p^2 \left[   \mathbf{E}(\xv,t) -  2    \gamma \psiv(\xv,t)  +  \gamma^2  \phiv(\xv,t) \right].  \nonumber
\end{align}
\end{subequations}

Derivation of a fourth-order accurate approximation to \eqref{eqn:governingE} now follows the approach outlined in~\cite{henshaw_2006}. In particular, the Taylor expansion \eqref{eqn:expansion2} is truncated after the first two terms, and numerical approximations to the differential and integral convolution terms are employed. As before $\Ejn  \approx \mathbf{E}( \xv_{\jv}, n \Delta t)$, $\psijn  \approx \psiv( \xv_{\jv}, n \Delta t)$, and additionally $\phijn  \approx \phiv( \xv_{\jv}, n \Delta t)$ is introduced as an approximation to $\phiv$. The discrete equation for the electric field can now be written as
\begin{align}
  \Ej^{n + 1}  -\ 2 \Ej^n + \Ej^{n - 1}  = & \    \Delta t^2 \left[ c^2 \Delta_{4h} \Ej^n  - \frac{\omega_p^2}{\epsilon_r}  \Ej^n +  \frac{\omega_p^2 \gamma}{\epsilon_r}  \psij^n \right]  \label{eqn:E4th} \\
		&  +  \frac{ \Delta t^4}{12}  \bigg[ c^4 \Delta^2_{2h} \Ej^n  -  \frac{2 c^2 \omega_p^2}{\epsilon_r}  \Delta_{2h} \Ej^n +  \frac{2 c^2 \omega_p^2 \gamma}{\epsilon_r}  \Delta_{2h} \psij^n  \nonumber \\
		&  \hspace{2cm} +  \frac{\omega_p^4}{\epsilon_r^2}  \Ej^n -  \frac{2 \omega_p^4 \gamma}{\epsilon_r^2}  \psij^n  + \frac{\omega_p^4 \gamma^2 }{\epsilon_r^2}   \phij^n \bigg],  \nonumber 
\end{align}
where $\Delta_{2 h}$ is, as before, the second-order accurate centered Laplacian, and $\Delta_{4 h}$ denotes the standard fourth-order accurate centered-difference approximation of the Laplacian as in
\[
  \Delta_{4h}=\sum_{d=1}^{\mathcal{D}}D_{+,x_d}D_{-,x_d}\left(I-\frac{h_d^2}{12}D_{+,x_d}D_{-,x_d}\right).
\]
Note that the fourth-order accurate approximation $\Delta_{4h}$ is used in the 1st term of \eqref{eqn:E4th}, while the second-order approximation $\Delta_{2h}$ is used in the 2nd term. The fact that the second-order accurate $\Delta_{2 h}$ in the 2nd term in \eqref{eqn:E4th} is sufficient for overall fourth-order accuracy relates to the term's location in the Taylor expansion, as discussed in~\cite{henshaw_2006,fornberg96}. In a similar manner, accuracy of the update equation \eqref{eqn:E4th} demands that the integral defining $\psij^n$ should be fourth-order accurate, while the integral defining $\phij^n$ needs only to be second-order accurate.

To obtain fourth-order accuracy for $\psij^n$, cubic polynomials are defined to interpolate the integrand between time levels, and the result is then integrated in time.  In particular, using the notation $v^n = v(n \Delta t)$ the infinite time convolution is decomposed as
\begin{align}
  \int_{0}^{\infty} v(\tau) d\tau & = 
    \int_{0}^{\Delta t} v(\tau) d\tau
    +\int_{\Delta t}^{\infty} v(\tau) d\tau. \label{eqn:TC}
\end{align}
The second term in \eqref{eqn:TC} is straightforward to treat using centered cubic interpolation over each time interval 
\begin{align}
  \int_{\Delta t}^{\infty} v(\tau) d\tau & = \sum_{m = 1}^{\infty} \int_{m \Delta t}^{(m + 1) \Delta t} v( \tau) d \tau  \\
   & = \frac{\Delta t}{24} \sum_{ m = 1}^{\infty} ( - v_{m - 1} + 13 v_m + 13 v_{m + 1} - v_{m + 2} ) + \mathcal{O}( \Delta t^4) \nonumber \\
  & \approx \Delta t \sum_{j = 3}^{\infty} v_j + \frac{25}{24} \Delta t \ v_2 + \frac{1}{2} \Delta t \ v_1 - \frac{1}{24} \Delta t \ v_0. \nonumber 
\end{align}
The integral over the last time interval $[0,\Delta t]$ uses a one-sided interpolant to yield
\begin{align}
\int_0^{\Delta t} v(\tau) d\tau \approx \frac{\Delta t}{24} \left( v_3 - 5 v_2 + 19 v_1 + 9 v_0 \right).
\end{align}
Substitution into \eqref{eqn:TC} then gives the following integral approximation
\begin{align} \label{eqn:psidisc}
\psij^n  %& = \int_0^{\infty} v(\tau) d\tau  \simeq \Delta t \sum_{ j = 4}^{\infty} v_j + \frac{25}{24} \Delta t \ v_3 + \frac{5}{6} \Delta t \ v_2 + \frac{31}{24} \Delta t \ v_1 + \frac{1}{3} \Delta t \ v_0 \nonumber \\
& = \Delta t \bigg( \sum_{ m = 4}^{\infty} e^{- m \Delta t \gamma}  \Ej^{n - m} + \frac{25}{24} e^{- 3 \Delta t \gamma} \Ej^{n - 3} \\
& \hspace{1.5cm}  + \frac{5}{6} \Ej^{n - 2}  \ e^{-  2 \Delta t \gamma} + \frac{31}{24} \ e^{- \Delta t \gamma} \Ej^{n - 1} + \frac{1}{3} \Ej^n \bigg) , \nonumber
\end{align}
which leads to the five level recursion relation
\begin{align}
\label{eqn:psiupdate4} \psij^{n + 1} = & \   e^{- \gamma \Delta t} \psij^n  + e^{- \gamma \Delta t} \Delta t \bigg( - \frac{1}{24} e^{- 3 \Delta t \gamma} \Ej^{n - 3}   + \frac{5}{24} e^{- 2 \Delta t \gamma} \ \Ej^{n - 2} \\
& \hspace{3.5cm} - \frac{11}{24} e^{- \Delta t \gamma} \Ej^{n - 1} + \frac{23}{24} \Ej^n \bigg) + \frac{1}{3} \Delta t \Ej^{n + 1} . \nonumber
\end{align}

The construction of a second-order accurate approximation of $\phij^n$ follows the approach used in Section~\ref{section:secondorderdrude}, albeit with a different convolution kernel, to obtain
\begin{align}
\label{eqn:phi} \phij^n & \approx   \frac{\Delta t}{2} \sum_{m = 0}^{\infty} \left(  m \Delta t \Ej^{n - m} e^{- \gamma j \Delta t}+ (m + 1) \Delta t \Ej^{n - m - 1} e^{- \gamma (m + 1) \Delta t} \right)  \\
 & \approx   \Delta t^2  \sum_{m = 0}^{\infty} (m + 1) e^{- \gamma (m + 1 ) \Delta t} \Ej^{n - m - 1} . \nonumber
\end{align}
This leads to a five level recursion relation 
\begin{align}
    \label{eqn:phiupdate4} \phij^{n + 1} & =  \Delta t^2 \left(  \sum_{m = 0}^{\infty} m e^{- \gamma (m + 1 ) \Delta t} \Ej^{n - m }  +   \sum_{m = 0}^{\infty} e^{- \gamma (m + 1 ) \Delta t} \Ej^{n - m } \right) \\
    & =    e^{- \Delta t \gamma} \Delta t^2  \sum_{m = 0}^{\infty} (m + 1) e^{- \gamma (m + 1 ) \Delta t} \Ej^{n - m - 1 }  +  \Delta t^2  \sum_{m = 0}^{\infty} e^{- \gamma (m + 1 ) \Delta t} \Ej^{n - m }   \nonumber \\
& = e^{- \Delta t \gamma} \phij^n + \Delta t^2 \left( e^{- 4 \Delta t \gamma} \Ej^{n - 3}  + e^{- 3 \Delta t \gamma} \Ej^{n - 2}  + e^{- 2 \Delta t \gamma} \Ej^{n - 1}  + \Ej^n \ e^{- \Delta t \gamma}  \right) \nonumber \\
& \hspace{5cm} + \Delta t^2 e^{- \Delta t \gamma} \sum_{ m = 4}^{\infty}    e^{- m \Delta t \gamma} \Ej^{n - m} \nonumber \\
& = e^{- \Delta t \gamma} \phij^n + \Delta t \ e^{- \Delta t \gamma} \psij^n \nonumber \\
& \quad + \Delta t^2 \left( - \frac{1}{24} e^{- 4 \Delta t \gamma}  \Ej^{n - 3} +  \frac{1}{6} e^{- 3 \Delta t \gamma}  \Ej^{n - 2} 
-  \frac{7}{24} e^{- 2 \Delta t \gamma}  \Ej^{n - 1} 
 +  \frac{2}{3} e^{-  \Delta t \gamma}  \Ej^n  \right). \nonumber
\end{align}
Here \eqref{eqn:psidisc} has been used in the last equality.

Equations \eqref{eqn:E4th}, \eqref{eqn:psidisc}, and \eqref{eqn:phiupdate4} give a complete five time level update from old times to the new time $t_{n+1}$. For clarity and completeness, the fully fourth-order accurate discretization is given as
\begin{subequations}
 \begin{align}
 \Ej^{n + 1}  = &  \ 2 \Ej^n + \Ej^{n - 1} +  \Delta t^2 \left[ c^2 \Delta_{4h} \Ej^n  - \frac{\omega_p^2}{\epsilon_r}  \Ej^n +  \frac{\omega_p^2 \gamma}{\epsilon_r}  \psij^n \right]  \label{eqn:4thscheme1} \\
		&  \hspace{3cm} +  \frac{ \Delta t^4}{12}  \bigg[ c^4 \Delta^2_{2h} \Ej^n  -  \frac{2 c^2 \omega_p^2}{\epsilon_r}  \Delta_{2h} \Ej^n +  \frac{2 c^2 \omega_p^2 \gamma}{\epsilon_r}  \Delta_{2h} \psij^n  \nonumber \\ 
		& \hspace{3cm} +  \frac{\omega_p^4}{\epsilon_r^2}  \Ej^n -  \frac{2 \omega_p^4 \gamma}{\epsilon_r^2}  \psij^n  + \frac{\omega_p^4 \gamma^2 }{\epsilon_r^2}   \phij^n \bigg], \nonumber  \\
	 \psij^{n + 1} = & \   e^{- \gamma \Delta t} \psij^n + \frac{1}{3} \Delta t \Ej^{n + 1}  \label{eqn:4thscheme2}    \\
& + e^{- \gamma \Delta t} \Delta t \left( - \frac{1}{24} e^{- 3 \gamma \Delta t} \Ej^{n - 3} + \frac{5}{24} e^{- 2 \gamma \Delta t} \ \Ej^{n - 2} - \frac{11}{24} e^{- \gamma \Delta t} \Ej^{n - 1} + \frac{23}{24} \Ej^n \right)  , \nonumber   \\
	 \phij^{n + 1}  = & \ e^{- \gamma \Delta t}  \phij^n + \Delta t \ e^{- \gamma \Delta t} \psij^n \label{eqn:4thscheme3} \\
&   + \Delta t^2 \left( - \frac{1}{24} e^{- 4 \gamma \Delta t}  \Ej^{n - 3} +  \frac{1}{6} e^{- 3 \gamma \Delta t}  \Ej^{n - 2} 
-  \frac{7}{24} e^{- 2 \gamma \Delta t}  \Ej^{n - 1} 
 +  \frac{2}{3} e^{-  \gamma \Delta t}  \Ej^n  \right). \nonumber
 \end{align}
 \label{eqns:4th}
 \end{subequations}

\subsection{Stability of the Fourth-Order Accurate Scheme for Drude Media} 
\label{section:4thorderstab}
Numerical stability of the discrete system given in \eqref{eqns:4th} is investigated following a similar approach to that of Section~\ref{section:2ndorderstab}. However, for the present case of the fourth-order accurate discretization, the result is less subtle than the corresponding result for the second-order accurate scheme. In particular, it is found that the system \eqref{eqns:4th} does not admit exponential growth for a range of dimensionless parameters which depend on the grid spacing and time step. This region, or a similar more restrictive but algebraically simpler region, can then be used in the determination of the time step. Thus the result here for the fourth-order scheme falls into the more intuitive notion of stability for wave equations which entirely disallows exponential growth. 

Similar to Section~\ref{section:2ndorderstab}, the exact solution to the discrete system \eqref{eqns:4th} employs a rescaling of the discrete unknowns with $\Psij^n \equiv \gamma \psij^n$ and $\Phij^n \equiv \gamma^2 \phij^n$. The relevant non-dimensional parameters are likewise identical to the prior case, and are given in Equation \eqref{eqn:stabscaling}. The discrete equations \eqref{eqns:4th} are now rescaled to obtain
\begin{subequations}
\begin{align}
  \Ej^{n + 1}  = &  \ 2 \Ej^n + \Ej^{n - 1} + \LapFour\Ej^n  - \Omega^2 \Ej^n +  \Omega^2 \Psij^n +  \frac{ 1}{12}  \big( \LapTwo^2 \Ej^n  - 2\Omega^2\LapTwo \Ej^n    \label{eqn:4thscheme1-2}   \\
		 & \hspace{4cm} + 2 \Omega^2\LapTwo \Psij^n +  \Omega^4 \Ej^n - 2 \Omega^4 \Psij^n  + \Omega^4 \Phij^n \big), \nonumber \\
	 \Psij^{n + 1} = & \   e^{- \Gamma } \Psij^n + \frac{1}{3} \Gamma  \Ej^{n + 1} \label{eqn:4thscheme2-2} \\
& + e^{- \Gamma } \Gamma \left( - \frac{1}{24} e^{- 3 \Gamma } \Ej^{n - 3} + \frac{5}{24} e^{- 2 \Gamma  } \Ej^{n - 2} - \frac{11}{24} e^{- \Gamma } \Ej^{n - 1} + \frac{23}{24} \Ej^n \right)  , \nonumber  \\
	 \Phij^{n + 1}  = & \ e^{- \Gamma } \Phij^n + \Gamma \ e^{- \Gamma } \Psij^n \label{eqn:4thscheme3-2} \\
&   + \Gamma^2 \left( - \frac{1}{24} e^{- 4 \Gamma }  \Ej^{n - 3} +  \frac{1}{6} e^{- 3 \Gamma }  \Ej^{n - 2} 
-  \frac{7}{24} e^{- 2 \Gamma }  \Ej^{n - 1}   +  \frac{2}{3} e^{-  \Gamma }  \Ej^n  \right), \nonumber 
\end{align}
\label{eqns:scaled4th}
\end{subequations}
where the following definitions of undivided operators have been used for simplicity:
\begin{align*}
  \LapTwo & = \sum_{d=1}^{\mathcal{D}}\lambda_d^2 \delta_{+,x_d}\delta_{-,x_d} \\
  \LapFour & = \sum_{d=1}^{\mathcal{D}}\lambda_d^2 \delta_{+,x_d}\delta_{-,x_d}\left(I-\frac{1}{12}\delta_{+,x_d}\delta_{-,x_d}\right).
\end{align*}
Separable solutions to the linear difference equations \eqref{eqns:scaled4th} are sought using the ansatz
\begin{align}
\Ej^n = \mathbf{C}_E A^n e^{i \, \kv\cdot\xv_{\jv} } ,  \qquad \Psij^n = \mathbf{C}_{\Psi}A^n e^{i \, \kv\cdot\xv_{\jv} } ,  \qquad \Phij^n = \mathbf{C}_{\Phi} A^n e^{i \, \kv\cdot\xv_{\jv} } ,
\end{align}
which yields the linear system
\begin{align}
\mathcal{M}(A)  \begin{bmatrix}
    \mathbf{C}_E \\ \mathbf{C}_{\Psi}\\ \mathbf{C}_{\Phi}
 \end{bmatrix} \equiv  \begin{bmatrix}
   M_{EE} & M_{E\Psi} & M_{E\Phi} \\
   M_{\Psi E} & M_{\Psi\Psi} & 0\\
   M_{\Phi E} & M_{\Phi\Psi} & M_{\Phi \Phi} \\
 \end{bmatrix}
 \begin{bmatrix}
    \mathbf{C}_E \\ \mathbf{C}_{\Psi}\\ \mathbf{C}_{\Phi}
 \end{bmatrix}
 =
 \begin{bmatrix}
   0 \\ 0 \\ 0
 \end{bmatrix}
 \label{eqns:Fourier4th}
\end{align}
where 
\begin{align*}
  M_{EE} & = -A^2+2A-1 + A\rho_{4h}-A\Omega^2 + A\frac{1}{12}\left(\rho_{2h}^2 - 2\rho_{2h}\Omega^2+\Omega^4\right)\\
  M_{E\Psi} & = A\Omega^2+\frac{1}{12}A\left( 2 \Omega^2 \rho_{2h}- 2 \Omega^4\right)\\
  M_{E\Phi} & = \frac{1}{12}A\Omega^4\\
  M_{\Psi E} & = \frac{1}{3}  A^4\Gamma + e^{- \Gamma } \Gamma \left( - \frac{1}{24} e^{- 3 \Gamma } + \frac{5}{24} A  e^{- 2 \Gamma  } - \frac{11}{24}  A^2 e^{- \Gamma } + \frac{23}{24} A^3  \right)\\
  M_{\Psi\Psi} & = -A^4+A^3e^{- \Gamma } \\
  M_{\Phi E} & = \Gamma^2 \left( - \frac{1}{24} e^{- 4 \Gamma } + \frac{1}{6}  A e^{- 3 \Gamma } -  \frac{7}{24}  A^2 e^{- 2 \Gamma } + \frac{2}{3} A^3  e^{-  \Gamma }   \right)\\
  M_{\Phi\Psi} & = A^3 \Gamma \ e^{- \Gamma } \\
  M_{\Phi \Phi} & = -A^4+ A^3e^{- \Gamma }, 
\end{align*}
%\begin{subequations}
%\begin{align}
%\mathbf{C}_E A^2  = &  \ 2 \mathbf{C}_E A - \mathbf{C}_E  + \rho_{4h} \mathbf{C}_E A  - \Omega^2 \mathbf{C}_E A +  \Omega^2 \mathbf{C}_{\Psi}A  \nonumber \\
%		&  +  \frac{ 1}{12}  \left( \rho_{2h}^2 \mathbf{C}_E A   - 2  \Omega^2 \rho_{2h} \mathbf{C}_E A +2 \Omega^2 \rho_{2h} \mathbf{C}_{\Psi}A   +  \Omega^4 \mathbf{C}_E A - 2 \Omega^4 \mathbf{C}_{\Psi}A  + \Omega^4 \mathbf{C}_{\Phi} A \right),  \label{eqn:4thscheme1Fourier} \\
%	 \mathbf{C}_{\Psi}A^4 = & \   e^{- \Gamma }  \mathbf{C}_{\Psi}A^3 + \frac{1}{3} \Gamma  \mathbf{C}_E A^4   \nonumber \\
%& + e^{- \Gamma } \Gamma \bigg( - \frac{1}{24} e^{- 3 \Gamma } \ \mathbf{C}_E  + \frac{5}{24} e^{- 2 \Gamma  } \ \mathbf{C}_E A - \frac{11}{24} e^{- \Gamma } \ \mathbf{C}_E A^2 + \frac{23}{24} \mathbf{C}_E A^3 \bigg)  , \label{eqn:4thscheme2Fourier}  \\
%	 \mathbf{C}_{\Phi} A^4 = & \ e^{- \Gamma } \ \mathbf{C}_{\Phi} A^3 + \Gamma \ e^{- \Gamma } \mathbf{C}_{\Psi}A^3 \nonumber \\
%&   + \Gamma^2 \left( - \frac{1}{24} e^{- 4 \Gamma }  \mathbf{C}_E  +  \frac{1}{6} e^{- 3 \Gamma }  \mathbf{C}_E A
%-  \frac{7}{24} e^{- 2 \Gamma }  \mathbf{C}_E A^2
% +  \frac{2}{3} e^{-  \Gamma }  \mathbf{C}_E A^3  \right), \label{eqn:4thscheme3Fourier}
%\end{align}
%\label{eqns:Fourier4th}
%\end{subequations}
and where the scaled symbols of the difference operators are given as
\begin{align*}
  \rho_{4h} & = \sum_{d=1}^{\mathcal{D}}\left[\lambda_d^2\left( -\frac{7}{3}+\frac{8}{3} \cos( \xi_d ) -\frac{1}{3} \cos^2(\xi_d)\right)\right]\\
  \rho_{2h} & = \sum_{d=1}^{\mathcal{D}}\left[\lambda_d^2\left( -4\sin^2(\xi_d) \right)\right].
\end{align*}

The solvability condition for nontrivial solutions of \eqref{eqns:Fourier4th} gives a 10th order polynomial in A defined by
\begin{align}
\text{Det} \ \mathcal{M(A)} = 0, \label{eqn:detcond}
\end{align}
which has four trivial roots A = 0. Again, since $\mathbf{C}_E, \mathbf{C}_{\Psi}$, and $\mathbf{C}_{\Phi}$ have three components, each of these ten roots is actually of multiplicity three (or more depending on their repetition) in the full problem. For $\lambda_d$, $\Omega$, and $\Gamma$ defined in equation \eqref{eqn:stabscaling}, the roots of this polynomial can be
evaluated numerically for a given set of wave numbers $\xi_d\in[-\pi,\pi]$. Given these preliminaries, we are now in a position to discuss the stability of the fourth-order accurate recursive convolution algorithm of \eqref{eqns:4th}. Of primary importance is the existence of a time-step restriction determined by the sufficient condition $ \max |A| \leq 1$ that ensures the scheme exhibits no exponential growth and therefore long-time stability, as summarized in the following proposition.
\begin{myproposition}
Let $A$ be the solutions to the solvability condition \eqref{eqn:detcond} for solutions of the fourth-order accurate recursive convolution algorithm of \eqref{eqns:4th}. A sufficient condition for $\max |A|\le1$ is given by
\begin{subequations}
\begin{align}
  \frac{4}{5}\Omega^4+16\Lambda^2-\frac{72}{5}\Omega^2-64\Lambda+48\ge0, & \qquad \hbox{and} \label{eq:RC4_time_step} \\
  |\Omega|<2 & \qquad \hbox{and} \\
  \Gamma\le0.68. \label{eq:RC4_GammaBound}& 
\end{align}
\label{eqns:RC4Bound}
\end{subequations}
Note that if the bound \eqref{eqns:RC4Bound} is satisfied then $|A|<1$, and therefore the scheme is clearly stable under Definition~\ref{def:stab} even in the infinite time limit when the final time $T\to\infty$. Note also that \eqref{eq:RC4_time_step} is a simple quadratic equation in the time step $\dt$, making the entire bound \eqref{eqns:RC4Bound} amenable to simple computation of the time step.
%where $\Gamma_0\approx 0.68$ and solves the equation
%\[
%  -2+\frac{2}{3}\Gamma
%  +\frac{1}{12}\left(48+15\Gamma\right)e^{-\Gamma}
%  +\frac{1}{6}\left(-12-17\Gamma\right)e^{-2\Gamma}
%  +\frac{4}{3}\Gamma e^{-3\Gamma}
%  -\frac{1}{2}\Gamma e^{-4\Gamma}
%  +\frac{1}{12}\Gamma e^{-5\Gamma}=0.
%\]
\end{myproposition}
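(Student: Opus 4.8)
The plan is to bound the nontrivial roots of the solvability condition \eqref{eqn:detcond} by reducing the coupled, transcendental problem to a tractable core, mirroring the treatment of the second-order scheme in Section~\ref{section:2ndorderstab}. First I would discard the four trivial roots $A=0$ and reduce the dependence on the wave numbers $\xi_d$. Exactly as in the analysis of $A_{\pm}$ in Proposition~\ref{prop:RC2Stab}, the scaled symbols $\rho_{2h}$ and $\rho_{4h}$ appearing in $\mathcal{M}(A)$ are monotone in each $\sin^2(\xi_d/2)$, so the maximum of $|A|$ over $\xi_d\in[-\pi,\pi]$ is attained at the corners $\xi_d\in\{0,\pi\}$. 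This collapses the continuum of von~Neumann conditions to the benign case $\rho_{2h}=\rho_{4h}=0$ and the worst case $\rho_{2h}=-4\Lambda^2$, $\rho_{4h}=-\tfrac{16}{3}\Lambda^2$.

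The cleanest point of entry is the limit $\Gamma\to0$. Because every entry of the second and third rows of $\mathcal{M}(A)$ in \eqref{eqns:Fourier4th} that couples back to $\Ej$ carries a factor of $\Gamma$, at $\Gamma=0$ the matrix is block upper triangular and $\operatorname{Det}\mathcal{M}=M_{EE}\,[A^3(1-A)]^2$. The auxiliary blocks then contribute only the decoupled roots $A=0$ and the marginal root $A=1$, while $M_{EE}=0$ is a genuine quadratic $A^2-bA+1=0$ with $b=2+\rho_{4h}-\Omega^2+\tfrac{1}{12}(\rho_{2h}-\Omega^2)^2$. Since the product of its roots is unity, these roots lie on the unit circle precisely when $|b|\le2$, which is the natural wave-propagation CFL; the upper bound $b\le2$ is non-binding in the region of interest, leaving $b\ge-2$ evaluated at the worst-case corner as the operative constraint. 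Clearing denominators there yields the true CFL surface, which at $\Omega=0$ reduces to $16(\Lambda^2-1)(\Lambda^2-3)\ge0$, i.e.\ the expected $\Lambda\le1$, recovering the limit embedded in \eqref{eq:RC4_time_step}.

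With the $\Gamma=0$ skeleton in hand, the remaining work is to turn on dissipation and the recursive auxiliary updates and track the six nontrivial roots by continuity. The expectation, to be confirmed by the numerically computed root surfaces exactly as $\mathcal{S}_2\subset\mathcal{S}_1$ was used for the second-order scheme, is that increasing $\Gamma$ moves the physical quadratic pair strictly inside the unit disk, sends the marginal root $A=1$ to $A\approx e^{-\Gamma}<1$, and produces the two new small roots near the origin that account for the drop from six to four trivial roots when $\Gamma\ne0$. The requirement $|\Omega|<2$ plays the same role here as in the expansion of $A_0$ for the second-order scheme, controlling the branch structure of the auxiliary roots, while $\Gamma\le0.68$ from \eqref{eq:RC4_GammaBound} is the threshold below which those auxiliary roots are certified to remain inside the unit circle.

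Finally I would record \eqref{eqns:RC4Bound} not as the exact stability boundary but as a conservative algebraic region contained within the true region, in the same spirit that $\mathcal{S}_2$ simplified $\mathcal{S}_1$. Establishing this containment, rather than the individual root estimates, is where I expect the main obstacle to lie: for $\Gamma\ne0$ the degree-six polynomial is both coupled and transcendental in $e^{-\Gamma}$ and does not factor, so there is no purely symbolic route to the root locations. The rigorous content therefore rests on the clean $\Gamma=0$ factorization for the CFL surface \eqref{eq:RC4_time_step} together with a perturbation-and-continuity argument, supported by numerical evaluation of the roots of \eqref{eqn:detcond} over $\xi_d\in[-\pi,\pi]$, to certify that no root crosses $|A|=1$ throughout the region \eqref{eqns:RC4Bound}. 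Once $|A|<1$ is secured there, stability in the sense of Definition~\ref{def:stab} follows immediately, and because no exponential growth is admitted the conclusion persists in the limit $T\to\infty$.
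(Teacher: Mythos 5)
Your strategy is essentially the paper's: restrict the wave numbers to the extreme values $\xi_d=0$ and $\xi_d=\pm\pi$, use the $\Gamma=0$ limit as the analytic anchor for the CFL surface, and certify containment of the simplified algebraic region inside the true $\max|A|\le1$ region by numerically computed root surfaces (the paper's iso-surface $\mathcal{S}_3$ plays exactly the role of your ``perturbation-and-continuity argument, supported by numerical evaluation''). Your block-triangular factorization $\operatorname{Det}\mathcal{M}=M_{EE}\,A^6(1-A)^2$ at $\Gamma=0$, reducing $M_{EE}=0$ to $A^2-bA+1=0$ with the unit-circle criterion $|b|\le 2$, is a welcome explicit derivation of what the paper merely asserts as the ``roof'' curve, and your worst-case evaluation at $\xi_d=\pm\pi$ is consistent with the scheme; note, though, that your resulting surface (a quartic in $\Lambda$ containing the cross term $8\Lambda^2\Omega^2$) does not literally coincide with the paper's stated roof $\Omega^4+16\Lambda^2-12\Omega^2-64\Lambda+48=0$, so the containment of the stated bound \eqref{eq:RC4_time_step} in your exact $\Gamma=0$ surface still has to be verified rather than read off. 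The two concrete items you leave unexplained, and which the paper does supply, are the origins of the specific constants: the coefficients $\tfrac{4}{5}$ and $\tfrac{72}{5}$ in \eqref{eq:RC4_time_step} arise because the iso-surface tightens slightly as $\Gamma$ grows, so the $\Gamma=0$ roof must be shrunk before extruding it in the $\Gamma$ direction; and the threshold $0.68$ in \eqref{eq:RC4_GammaBound} is obtained from the explicit transcendental limit equation \eqref{eq:RC4GamBound}, whose most restrictive solution lies along $\Omega=0$ at $\Gamma\approx0.6889953407$. Supplying those two calculations would bring your argument fully in line with the paper's proof; as written, the logical skeleton is the same and the level of (numerically assisted) rigor matches the paper's.
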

\begin{proof}
The solvability condition for \eqref{eqns:Fourier4th} for nontrivial solutions gives a 10th order polynomial in $A$, with four trivial roots $A=0$. Given $\lambda_d$, $\Omega$, and $\Gamma$, as in Equation \eqref{eqn:stabscaling}, the roots of this polynomial can be evaluated numerically for a given set of wave numbers $\xi_d\in[-\pi,\pi]$. Maximizing the size of this amplification factor over the range of wave numbers reveals that the maximum occurs when either $\xi_d=0$, or $\xi_d=\pm\pi$. When $\xi_d=0$ the stability of the algorithm depends only on the dispersive and dissipative terms (i.e. $A$ is pure real similar to the case $A_0$ for the second-order scheme above), and the roots do not depend on $\lambda_d$. When $\xi_d=\pm\pi$, the dependence on the individual $\lambda_d$ is simplified to dependence on $\Lambda$. Therefore in the space of relevant parameters $\Lambda$, $\Omega$, and $\Gamma$, the unity iso-surface of the maximum $|A|$, defined by
\begin{align}
\mathcal{S}_3 = \bigg\{ ( \Omega, \Gamma, \Lambda ) \ : \  \underset{A}{ \max } \big|A(\Omega, \Gamma, \Lambda) \big| = 1 \bigg\} ,  \label{eqn:isoset3}
\end{align}
can be computed, and is displayed on the left of Figure~\ref{fig:RC4_iso}.
\begin{figure}
  \begin{center}
  \includegraphics[width=.325\textwidth]{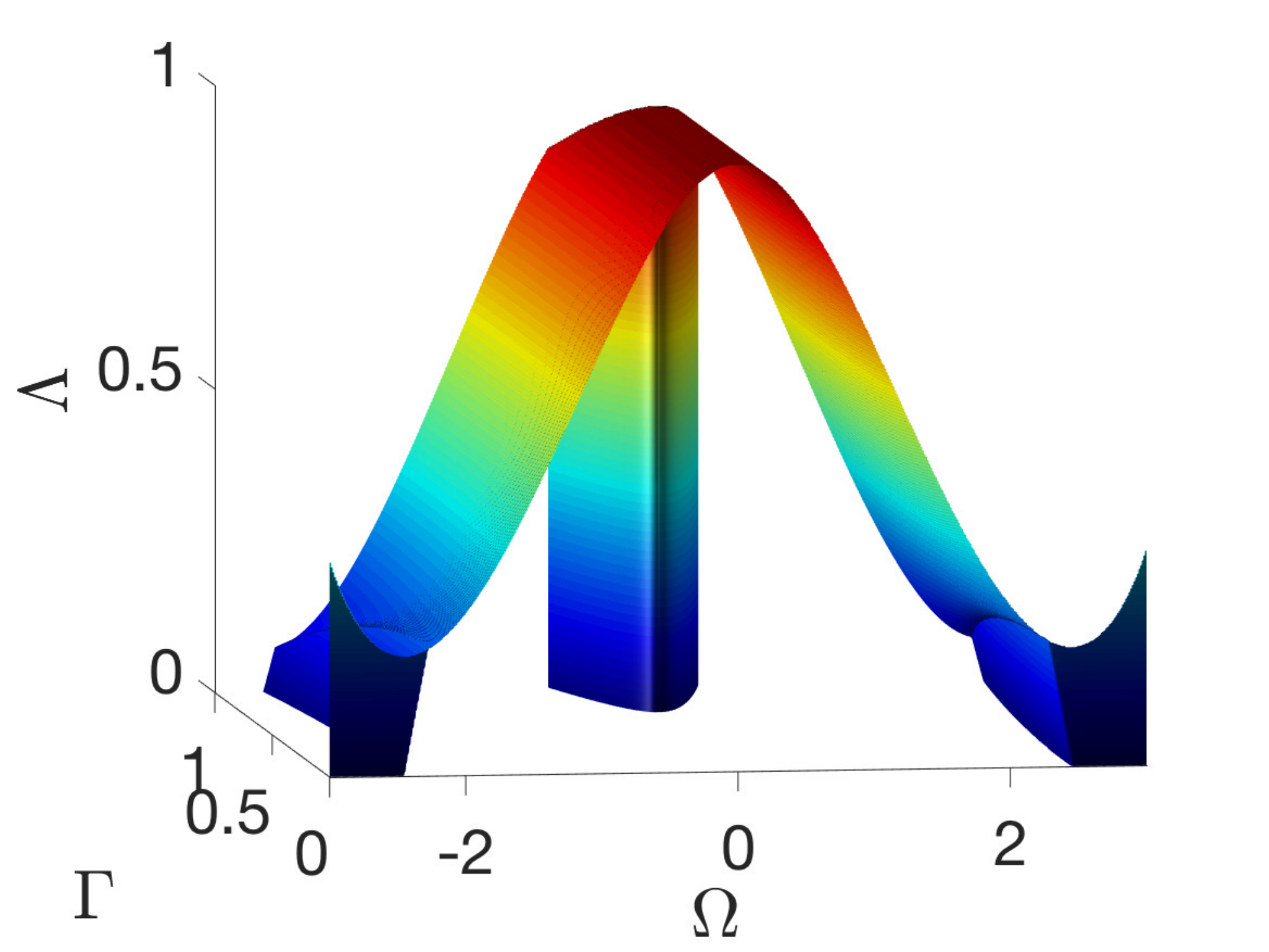} \hfill
  \includegraphics[width=.325\textwidth]{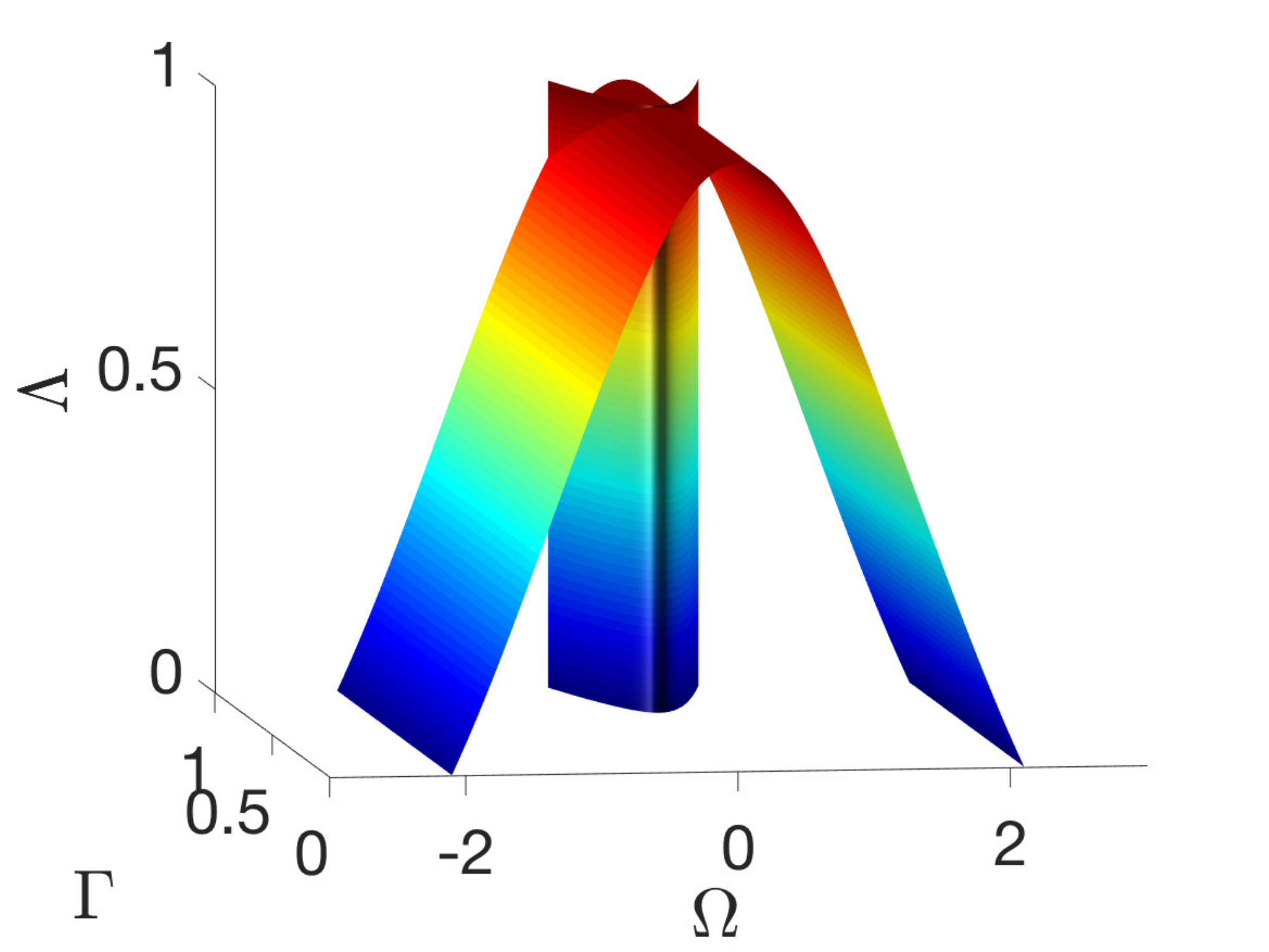} \hfill
  \includegraphics[width=.325\textwidth]{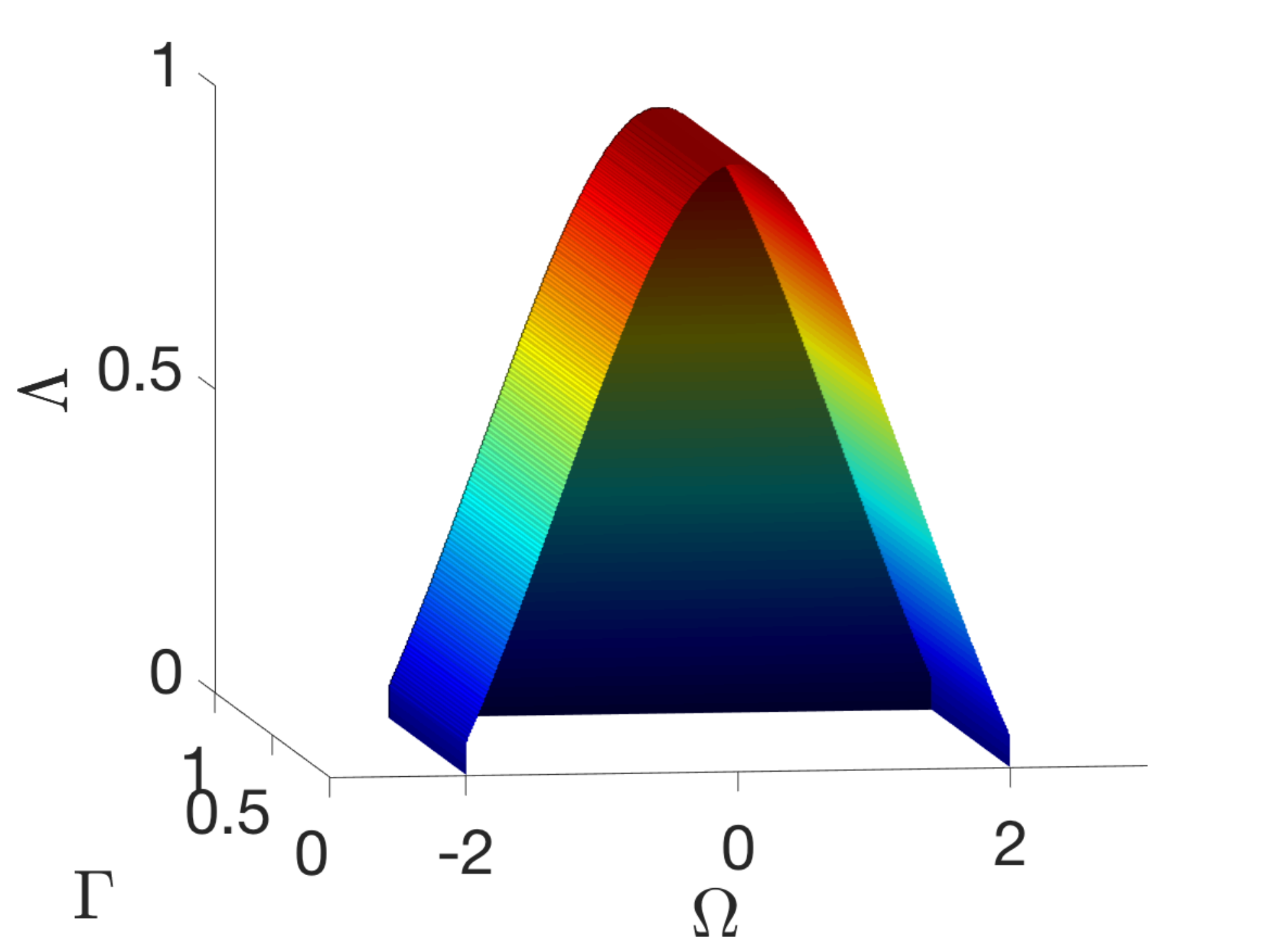}
  \caption{At left is the unity iso-surface of the maximum of $|A|$ in the space of dimensionless parameters $\Omega$, $\Gamma$, and $\Lambda$. In the middle are simplified surfaces defined by \eqref{eq:RC4_time_step} in blue and \eqref{eq:RC4GamBound} in red. At right is the entire bounding surface defined by \eqref{eqns:RC4Bound} which is used in determining the time step size.}
  \label{fig:RC4_iso}
  \end{center}
\end{figure}
The curve defining the ``roof'' of this iso-surface at $\Gamma=0$ is easily found as the solution to $\Omega^4+16\Lambda^2-12\Omega^2-64\Lambda+48=0$. It is tempting to simply extend this curve uniformly in the $\Gamma$ direction, but a close inspection shows that slight modification is needed since the actual iso-surface becomes slightly more restrictive as $\Gamma\to.68$. One such modification is given in \eqref{eq:RC4_time_step}, which is shown in the center and left panels of Figure~\ref{fig:RC4_iso}. Looking again at the iso-surface defining $|A|=1$, any simplified bound must also be limited in both the $\Omega$ and $\Gamma$ directions. For the former, the branch cut is avoided by simply restricting $|\Omega|<2$ (again not a tight bound). For the latter, the curve defining the limit for increasing $\Gamma$ is found to solve the equation
\begin{align}
  \frac{2}{3}\Gamma\Omega^2+\Omega^2+4\Gamma-12
  +e^{-\Gamma}\left(\Gamma^2\Omega^2+\Gamma\left[-\frac{5}{4}\Omega^2+\frac{15}{2}\right]-2\Omega^2+24\right)   \label{eq:RC4GamBound}
  \\
   +e^{-2\Gamma}\left(\frac{17}{6}\Gamma\Omega^2+\Omega^2-17\Gamma-12\right) 
  +e^{-3\Gamma}\Gamma\left(-\frac{4}{3}\Omega^2+8\right) \nonumber \\
  +e^{-4\Gamma}\Gamma\left(\frac{1}{2}\Omega^2-3\right) 
  -\frac{1}{12}e^{-5\Gamma}\Gamma\left(\Omega^2-6\right)=0, \nonumber
\end{align}
the most restrictive of which lies along $\Omega=0$ where $\Gamma<.6889953407$. Again this is reduced slightly in the bound given in \eqref{eq:RC4_GammaBound}. The complete simplified bound \eqref{eqns:RC4Bound} is illustrated in the right panel of Figure~\ref{fig:RC4_iso}.
\end{proof}

We finally summarize the appropriate choice of $\Delta$ for a stable fourth-order algorithm with the following proposition.

\begin{myproposition} 
Given physical parameters $c$, $\omega_p^2/\epsilon_r$, and $\gamma$, define $\Delta t_m$ to be the smallest root of the fourth order polynomial equation
\begin{align}
\left(  \frac{4 \omega_p^4  }{5 \epsilon_r^2} \right) \Delta t^4 + \left( 16 c \sqrt{\sum_{d=0}^{\mathcal{D}}\frac{1}{h_d^2} } - \frac{72 \omega_p^2}{5 \epsilon_r} \right) \Delta t^2 - \left( \frac{64 c}{\Delta x} \right) \Delta t + 48 = 0 .
\end{align}
A practical sufficient condition for stability of the second-order accurate recursive convolution algorithm of \eqref{eqns:RC2} is given by
\begin{align}
\Delta t = \min \left\{ \Delta t_m,  \frac{0.68}{\gamma} \right \} .
\end{align}
\end{myproposition}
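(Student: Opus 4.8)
The plan is to prove the claim by translating the sufficient stability conditions already established into an explicit restriction on the time step: the assertion is not a new stability analysis but a substitution of the non-dimensional parameters into those conditions. Recall that stability in the sense of Definition~\ref{def:stab} follows once $\max|A|\le1$, and that a sufficient set of conditions for $\max|A|\le1$ was established above as \eqref{eqns:RC4Bound}, namely the combined inequality \eqref{eq:RC4_time_step}, the auxiliary bound $|\Omega|<2$, and $\Gamma\le0.68$ from \eqref{eq:RC4_GammaBound}. The entire task is therefore to substitute the non-dimensional definitions of \eqref{eqn:stabscaling} into these conditions and to verify that the stated $\min$ enforces all of them at once.

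First I would substitute $\Omega^2=\omega_p^2\Delta t^2/\epsilon_r$, $\Lambda=c\,\Delta t\sqrt{\sum_{d}h_d^{-2}}$, and $\Gamma=\gamma\Delta t$ into \eqref{eq:RC4_time_step}. Since that inequality is a quadratic in $\Omega^2$ and in $\Lambda$, and each of $\Omega^2$ and $\Lambda$ is a single power of $\Delta t$, collecting powers of $\Delta t$ turns the left-hand side into the quartic $P(\Delta t)$ appearing in the statement (with $1/\Delta x$ understood as $\sqrt{\sum_d h_d^{-2}}$); its smallest positive real root is exactly $\Delta t_m$. The remaining cap $\Gamma\le0.68$ becomes $\Delta t\le0.68/\gamma$ directly.

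Next I would show that restricting to $\Delta t\le\Delta t_m$ is equivalent to enforcing \eqref{eq:RC4_time_step}. Because $P(0)=48>0$, the leading coefficient $4\omega_p^4/(5\epsilon_r^2)$ is positive, and the linear coefficient is negative, $P$ decreases from $48$ for small $\Delta t$; the physically relevant CFL limit is its first downward crossing, so $P(\Delta t)\ge0$ holds precisely on $0<\Delta t\le\Delta t_m$. Choosing $\Delta t=\min\{\Delta t_m,\,0.68/\gamma\}$ then secures both \eqref{eq:RC4_time_step} and \eqref{eq:RC4_GammaBound} simultaneously, and by \eqref{eqns:RC4Bound} this yields $\max|A|\le1$, hence stability of the recursive convolution algorithm of \eqref{eqns:RC2} by Definition~\ref{def:stab}.

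The main obstacle I anticipate is the auxiliary condition $|\Omega|<2$, which the stated $\min$ does not enforce explicitly. I would argue that along the CFL-limited branch the first root $\Delta t_m$ already keeps $\Omega$ within the admissible range for the parameter regimes of interest---equivalently, that the first sign change of $P$ occurs before $\Omega$ reaches $2$ whenever the spatial term is active---and I would separately flag the degenerate case in which $P$ has no positive real root (the dispersive and spatial terms never forcing a CFL limit), where the binding restriction must instead be read off from the $|\Omega|<2$ and $\Gamma\le0.68$ caps. Verifying that the claimed $\min$ is genuinely the binding constraint across all admissible physical parameters, and reconciling the $|\Omega|<2$ bound with the two-term $\min$, is the only delicate point; everything else is the routine algebra of the substitution.
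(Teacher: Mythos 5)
Your proposal is correct and is essentially the paper's own argument: the paper proves this proposition in a single line by substituting $\Lambda$, $\Omega$, and $\Gamma$ from \eqref{eqn:stabscaling} into the simplified sufficient conditions \eqref{eqns:RC4Bound} of the preceding proposition, exactly the translation you carry out, with your sign analysis ($P(0)=48>0$, $P'(0)<0$, hence $P\ge0$ on $(0,\Delta t_m]$ up to the first positive root) merely making that substitution airtight. Your closing caveats are apt and go slightly beyond the paper: the two-term $\min$ indeed does not enforce $|\Omega|<2$, which the paper's one-line proof silently omits as well (it can bind only in the degenerate regime where the spatial term is inactive, $\Lambda\approx0$, where the quartic's first root corresponds to $\Omega\approx2.1$), and you correctly read through the statement's typographical slips --- under the substitution $16\Lambda^2$ yields a $\Delta t^2$ coefficient $16\,c^2\sum_d h_d^{-2}$ rather than the printed $16\,c\,\bigl(\sum_d h_d^{-2}\bigr)^{1/2}$, the $\Delta t$ coefficient should be $64\,c\,\bigl(\sum_d h_d^{-2}\bigr)^{1/2}$ rather than $64c/\Delta x$, and the proposition concerns the fourth-order scheme \eqref{eqns:4th}, not \eqref{eqns:RC2}.
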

\noindent The proof follows from \eqref{eq:RC4GamBound} along with the choice $\Gamma \leq .68$ with the substitutions $\Lambda = c\Delta t\sqrt{\sum_{d=0}^{\mathcal{D}}\frac{1}{h_d^2} } $, $\Omega = \Delta t \omega_p / \sqrt{\epsilon_r}$, and $\Gamma = \Delta t \gamma$ given in \eqref{eqn:stabscaling}.

%%%%%
\section{Numerical Compatibility Conditions at Material Interfaces} 
\label{sec:interface}
The discrete treatment of the interface conditions \eqref{eqns:int} separating disparate materials is now considered, recalling that in, the present discussion, each material is assumed to have constant material properties $\epsilon_r,   \mu_r,  \omega_p,$ and $\gamma$. Following the approach used by Henshaw in \cite{henshaw_2006}, which considered the case of non-dispersive dielectric media, ghost cells are introduced in order to simplify the application of the interface conditions. The discretization will be applied up to, and including, the interface, and so a sufficient number of ghost cells are required to support the spatial discretization stencil. In particular, for an order $p$ discretization (with $p$ assumed even), $p/2$ ghost cells will be needed. Thus for higher-order schemes, additional equations must be derived to define the data in the ghost cells. The additional equations will be called ``compatibility conditions,'' because the equations are compatible with both the interior and interface conditions. One important note is that the recursive convolution formulas, e.g. \eqref{eqn:recursionpsi} for the second-order scheme or \eqref{eqn:4thscheme2} and \eqref{eqn:4thscheme3} for the fourth-order scheme, are applied in the ghost cells to define the auxiliary quantities $\psiv$ and possibly $\phiv$ from prior time levels. In addition, since the governing equations \eqref{eqns:governing} are expressed in their second-order formulation, the magnetics do not directly appear in the mathematical formulation. Taken together this implies that it is sufficient to derive compatibility conditions for $\mathbf{E}$ (or equivalently $\mathbf{D}$).

Equations \eqref{eqns:int} along with the governing PDE \eqref{eqns:governing} provide a complete description of the problem at hand. However, as discussed above, additional compatible interface conditions will be derived at a discrete level in order to specify the normal and tangential components of the fields in the ghost cells. These additional conditions will take the form of derivative jump conditions of increasing order, the primal conditions \eqref{eqns:int} being undifferentiated, as required by the numerical scheme. First derivative jump conditions on the normal components of the fields are derived by directly integrating equation \eqref{eqn:gauss} across the interface. For the tangential components of the fields, equation \eqref{eqn:jumps3} is differentiated in time, and the definition of the time derivative from governing equation \eqref{eqn:macro1} is used. These two first-derivative jump conditions are
\begin{subequations}
\begin{align}
  \left[ \nabla \cdot \mathbf{E} \right]_{\mathcal{I}} = 0, \label{eqn:firstDerivativeJumps_normal} \\
  \left[ \mu^{-1} \mathbf{n} \times \nabla \times \mathbf{E} \right]_{\mathcal{I}} = 0.\label{eqn:firstDerivativeJumps_tangential} 
\end{align}
\label{eqns:firstDerivativeJumps}
\end{subequations}
Second-derivative jump conditions are derived similarly by differentiating \eqref{eqn:jumps2} and \eqref{eqn:jumps1} twice with respect to time, followed by substitution of the time derivatives as defined in the governing equations \eqref{eqn:waveE2} and \eqref{eqn:vectorid}, respectively, to yield\begin{subequations}
\begin{align}
  \left[ \mu^{-1}  \mathbf{n} \cdot \Delta \mathbf{E} \right]_{\mathcal{I}} = 0, \label{eqn:secondDerivativeJumps_normal}\\
  \left[ ( \epsilon_0 \epsilon_r \mu )^{-1}  \mathbf{n} \times  ( \Delta + \mu \eta \ * ) \mathbf{E} \right]_{\mathcal{I}} = 0. \label{eqn:secondDerivativeJumps_tangential} 
\end{align}
\label{eqns:secondDerivativeJumps}
\end{subequations}
Equations \eqref{eqn:firstDerivativeJumps_normal} and \eqref{eqn:secondDerivativeJumps_normal} represent two conditions that, upon discretization by standard second-order accurate centered differences, define the normal component of $\mathbf{E}$ in two ghost cells, one on either side of an interface, as appropriate for a second-order scheme. Similarly, Equations\eqref{eqn:firstDerivativeJumps_tangential} and \eqref{eqn:secondDerivativeJumps_tangential} can be used to define the tangential components in the same two ghost cells.

Additional derivative conditions are derived by taking additional time derivatives of \eqref{eqns:firstDerivativeJumps} and \eqref{eqns:secondDerivativeJumps}, and making use of \eqref{eqn:higherderivs} to give a full set of interface conditions for $q = 1, 2, 3, \dots$:
\begin{subequations}
\begin{align}
  \left[ \epsilon_0  \epsilon_r (\epsilon_0  \epsilon_r \mu)^{-q} \nabla \cdot \Delta \left( \Delta +  \mu \eta \ * \right)^{q - 1} \mathbf{E} \right]_{\mathcal{I}} & = 0,  \\
  \left[  \mu^{-1} ( \epsilon_0 \epsilon_r \mu )^{-q} \mathbf{n} \times  \nabla \times ( \Delta +  \mu \eta \ * )^q \mathbf{E} \right]_{\mathcal{I}}  & = 0, \\
  \left[ \epsilon_0  \epsilon_r ( \epsilon_0  \epsilon_r \mu )^{-q} \mathbf{n} \cdot   \Delta ( \Delta +  \mu \eta \ * )^{q - 1}  \mathbf{E} \right]_{\mathcal{I}} & = 0, \\
 \left[  ( \epsilon_0  \epsilon_r \mu )^{-q} \mathbf{n} \times  ( \Delta +  \mu \eta \ * )^q \mathbf{E} \right]_{\mathcal{I}} & = 0.
\end{align}
\label{eqns:generalJumps}
\end{subequations}
Discrete versions of \eqref{eqns:generalJumps} are then used as numerical compatibility conditions in defining the fields in the ghost cells on either side of an interface. For instance with $q=1$, Equations \eqref{eqns:generalJumps} are identical to Equations \eqref{eqns:firstDerivativeJumps} and \eqref{eqns:secondDerivativeJumps}, which is appropriate for the second-order discretization as previously discussed. For the fourth-order accurate discretization one requires 2 ghost cells on either side of an interface, and so the equations in \eqref{eqns:generalJumps} would need to be used with both $q=1$ and $q=2$. Discretizing the resulting equations using standard centered differences in a $7$-pt stencil yields a complete definition of the fields in the ghost cells. To be concrete, specific examples using the Drude model are discussed below.

\subsection{Numerical Interface Conditions for the Second-Order Scheme with the Drude Model in 2D} \label{section:secondorderinterface}
As an example, consider an interface $\mathcal{I}$ at $x = 0$ in $\mathbb{R}^2$ separating two dispersive Drude media. Here $E_x$ denotes the normal component of the electric field, and without loss of generality consider $E_y$ to be the tangential component of the field (other polarizations of light are similar). Similarly, let $\psi_x$ and $\psi_y$ be the analogous normal and tangential components of the auxiliary field $\psiv$. For the second-order accurate scheme \eqref{eqns:RC2}, one ghost cell is required on each side of the interface, and therefore two compatibility conditions for the values of the normal and tangential electric field components of $\mathbf{E}$ are needed. The equations in \eqref{eqns:generalJumps} with $q=1$ yield the four conditions
\begin{subequations}
\begin{align}
  \left[   \partial_x E_x + \partial_y E_y  \right]_{\mathcal{I}} &= 0, \label{eqn:2ndorderJump1} \\
  \left[ \mu^{-1} \left( \partial_y E_x - \partial_x E_y  \right) \right]_{\mathcal{I}} &= 0, \label{eqn:2ndorderJump2}\\
  \left[ \mu^{-1} \Delta E_x \right]_{\mathcal{I}} & = 0, \label{eqn:2ndorderJump3} \\
  \left[ ( \epsilon_0 \epsilon_r \mu )^{-1}  ( \Delta E_y - \mu  \omega_p^2 E_y + \mu \omega_p^2 \gamma \psi_y )  \right]_{\mathcal{I}} & =  0 .  \label{eqn:2ndorderJump4}
\end{align}
\label{eqns:2ndOrderJumps}
\end{subequations}
Discrete equations are then derived by replacing derivatives with standard second-order accurate centered differencing. In particular, Equations \eqref{eqns:2ndOrderJumps} become
\begin{subequations}
\begin{align}
  \left[   D_{0,x} E_x + D_{0,y} E_y  \right]_{\mathcal{I}} &= 0, \label{eqn:2ndorderJump1_discrete} \\
  \left[ \mu^{-1} \left( D_{0,y} E_x - D_{0,x} E_y  \right) \right]_{\mathcal{I}} &= 0, \label{eqn:2ndorderJump2_discrete}\\
  \left[ \mu^{-1} \Delta_{2h} E_x \right]_{\mathcal{I}} & = 0, \label{eqn:2ndorderJump3_discrete} \\
  \left[ ( \epsilon_0 \epsilon_r \mu )^{-1}  ( \Delta_{2h} E_y - \mu  \omega_p^2 E_y + \mu \omega_p^2 \gamma \psi_y )  \right]_{\mathcal{I}} & =  0,  \label{eqn:2ndorderJump4_discrete}
\end{align}
\label{eqns:2ndOrderJumps_discrete}
\end{subequations}
where $D_{0,d}=(D_{+,d}+D_{-,d})/2$. The discrete equations \eqref{eqns:2ndOrderJumps_discrete} determine the value of the normal and tangential fields in the ghost cells. Note from the second-order interior scheme \eqref{eqns:RC2} that the auxiliary field $\psiv$ is not differentiated and so it need not be updated in the ghost. Nonetheless, in practice, it is prudent to maintain an updated value for the auxiliary quantity even in the ghost, and for this purpose \eqref{eqn:recursionpsi} is applied.
 
\subsection{Numerical Interface Conditions for the Fourth-Order Scheme with the Drude Model in 2D}
\label{section:fourthorderinterface}
As in section \ref{section:secondorderinterface}, consider the interface $\mathcal{I}$ at $x = 0$ separating two dispersive Drude media. Here however, consider using the fourth-order accurate scheme \eqref{eqns:4th} which requires two ghost cells on either side of the interface. As before, $E_x$ and $E_y$ are the normal and tangential components of the field, $\psi_x$ and $\psi_y$ are the corresponding components of the auxiliary field $\psiv$, and now $\phi_x$ and $\phi_y$ are the normal and tangential components of the auxiliary field $\phiv$. In all ghost cells, the components of $\psiv$ are updated via \eqref{eqn:4thscheme2}, and the components of $\phiv$ are updated via \eqref{eqn:4thscheme3}. The remaining two components of the electric field in the ghost cells then require $8$ conditions, which are obtained from Equations \eqref{eqns:generalJumps} with $q=1$, and $q=2$ as
\begin{subequations}
\begin{align}
  \left[   \partial_x E_x + \partial_y E_y \right]_{\mathcal{I}} & = 0, \label{eqn:4thorderJumps1} \\
  \left[ \mu^{-1} \left( \partial_y E_x - \partial_x E_y  \right) \right]_{\mathcal{I}} & = 0, \label{eqn:4thorderJumps2}\\
  \left[ \mu^{-1} \Delta E_x \right]_{\mathcal{I}} & = 0,  \label{eqn:4thorderJumps3}\\
  \left[ ( \epsilon_0 \epsilon_r \mu )^{-1}  ( \Delta E_y - \mu  \omega_p^2 E_y + \mu \omega_p^2 \gamma \psi_y )  \right]_{\mathcal{I}} & =  0 , \label{eqn:4thorderJumps4}\\
  \left[ \mu^{-1} \left( \partial_x \Delta E_x + \partial_y \Delta E_y \right) \right]_{\mathcal{I}} & = 0, \label{eqn:4thorderJumps5}\\
  \big[  \mu^{-1} ( \epsilon_0 \epsilon_r \mu )^{-1} \big(  \partial_y ( \Delta E_x - \mu  \omega_p^2 E_x + \mu \omega_p^2 \gamma \psi_x ) \hspace{1cm} \label{eqn:4thorderJumps6} \\
   - \ \partial_x ( \Delta E_y - \mu  \omega_p^2 E_y + \mu \omega_p^2 \gamma \psi_y )  \big) \big]_{\mathcal{I}} & = 0, \nonumber \\
  \left[ \epsilon_0  \epsilon_r ( \epsilon_0  \epsilon_r \mu )^{-2}    \Delta ( \Delta E_x - \mu  \omega_p^2 E_x + \mu \omega_p^2 \gamma \psi_x )  \right]_{\mathcal{I}}  & =  0  ,  \label{eqn:4thorderJumps7} \\
  \left[ \Delta^2 E_y  + 2 \mu  \epsilon_0 \omega_p^2 \left( - \Delta E_y +   \gamma \Delta \psi_y \right) + \mu^2  \epsilon_0^2  \omega_p^4 \left(  E_y -  2   \gamma \psi_y  +   \gamma^2  S \right)  \right]_{\mathcal{I}} & =  0 . \label{eqn:4thorderJumps8}
\end{align}
\label{eqns:4thorderJumps}
\end{subequations}
As before, discrete equations are derived from Equations \eqref{eqns:4thorderJumps} by replacing derivative operators with difference operators. As described in~\cite{henshaw_2006}, fourth-order accuracy is obtained when Equations \eqref{eqn:4thorderJumps1} through \eqref{eqn:4thorderJumps4} use fourth-order accurate centered differences (e.g. $\partial_x\approx D_{0,x}(I-\frac{h_x^2}{6}D_{+,x}D_{-,x})$), while Equations \eqref{eqn:4thorderJumps5} through \eqref{eqn:4thorderJumps8} may use second-order accurate centered differences (e.g. $\partial_{xxx}\approx D_{0,x}D_{+,x}D_{-,x}$). In this way the entire discrete version of Equations \eqref{eqns:4thorderJumps} lies in a 7-pt stencil, and thus defines the fields in the ghost cells.

\section{Numerical Example} 
\label{section:six}
We now present a number of example computations using the schemes described in Section~\ref{section:scheme} above. In the first problem a simple traveling plane wave is used to confirm the theoretical predictions discussed in Sections~\ref{section:4thorderstab} and~\ref{section:2ndorderstab}. In particular, the existence of exponentially growing modes is shown for the second-order scheme, while at the same time second-order convergence is also demonstrated. For the same problem no exponential growth at all is observed for the fourth-order scheme, and the expected fourth-order convergence is demonstrated. Subsequently, we will present a number of sample computations of more practical relevance. In particular, second and fourth-order max-norm convergence will be illustrated for the problem of scattering between two disparate materials where the incident and reflecting medium is a dielectric and the refracting medium obeys a dispersive Drude description. Lastly, we investigate the propagation of a surface plasmon polariton mode along the same interface and perform an analogous convergence study.

\subsection{Periodic Dissipative Plane Wave with Application to Stability}
\label{sec:per1d}

To illustrate the theoretical predictions discussed in Section~\ref{section:scheme} above, consider a periodic one-dimensional traveling wave solution for the Drude model of the form 
\begin{align}
E(x,t) = E_0 e^{i k x} e^{s t}.  \label{eqn:dissipative_plane_wave}
\end{align}
Here $k$ is a real valued wave number, $E$ represents any of the electric field components, and $s \in \mathbb{C}$ is to be determined. Substitution of \eqref{eqn:dissipative_plane_wave} into the one-dimensional restriction of Equation \eqref{eqn:governingE} gives the dispersion relation
\begin{align}
s^3 + \gamma s^2 + c^2 k^2 s + \left( \gamma c^2 k^2 + \frac{ \omega_p^2}{\epsilon_r} \right) = 0. \label{eqn:dispersionrelation}
\end{align}
Because the discriminant of Equation \eqref{eqn:dispersionrelation} is negative, there are two complex-valued solutions corresponding to dissipative waves propagating to the left and right, and a third real-valued solution corresponding to a strictly decaying wave. In the discussion to follow, the right-moving wave is chosen, although there is nothing particularly unique to this mode. 

With the goal of demonstrating exponential growth of the approximation for low modes as predicted by Proposition~\ref{prop:RC2Growth}, coefficients are chosen which correspond to a very highly damped material. In particular, $c=1$, $\omega_p=3$, $\epsilon_r=1$, and $\gamma=10$. The physical domain is set to be $x\in[-\pi,\pi]$, periodic boundary conditions are used, and the wavenumber is $k=5$. For reference, these parameters yield a solution to the dispersion relation \eqref{eqn:dispersionrelation} with $s=5.185973- .3765531 i$.  The leftmost panel of Figure~\ref{fig:RC2_conv} shows the time history of the maximum error in the computation using the second-order accurate scheme with the time step chosen to be $99\%$ of the bound given by \eqref{eq:RC2_time_step} for different numbers of grid points in the domain. As expected, all solutions exhibit early time decay, but for the coarser resolutions exponential growth which was seeded by numerical noise on the order of $10^{-16}$ becomes apparent near $t=65$. To more clearly display the nature of that growth, the center panel of Figure~\ref{fig:RC2_conv} shows the numerical solution with $N=101$ points in the domain just prior to, during, and just after the transition from decay to growth. Here the most prominent growing mode is clearly the constant mode, as predicted by the theory. The final panel in Figure~\ref{fig:RC2_conv} shows a convergence study at $t=20$ which clearly indicates convergence of the numerical approximation at the expected second-order rate. Note that later time convergence, for example $t=200$, is much faster than second-order prior to saturation at machine precision, since the theory predicts the spurious growth rate to be $O(\dt^2)$ in physical time $t$. The computations are in good agreement with this prediction given that the numerical seed is machine round-off. Figure~\ref{fig:RC4_conv} shows a similar set of numerical results using the fourth-order accurate scheme with a time-step taken to be $99\%$ of the bound given in \eqref{eqns:RC4Bound}. As expected, there are no spurious exponentially growing modes, and fourth-order max-norm convergence is observed.

\begin{figure}[hbt]
  \begin{center}
  \includegraphics[width=.325\textwidth]{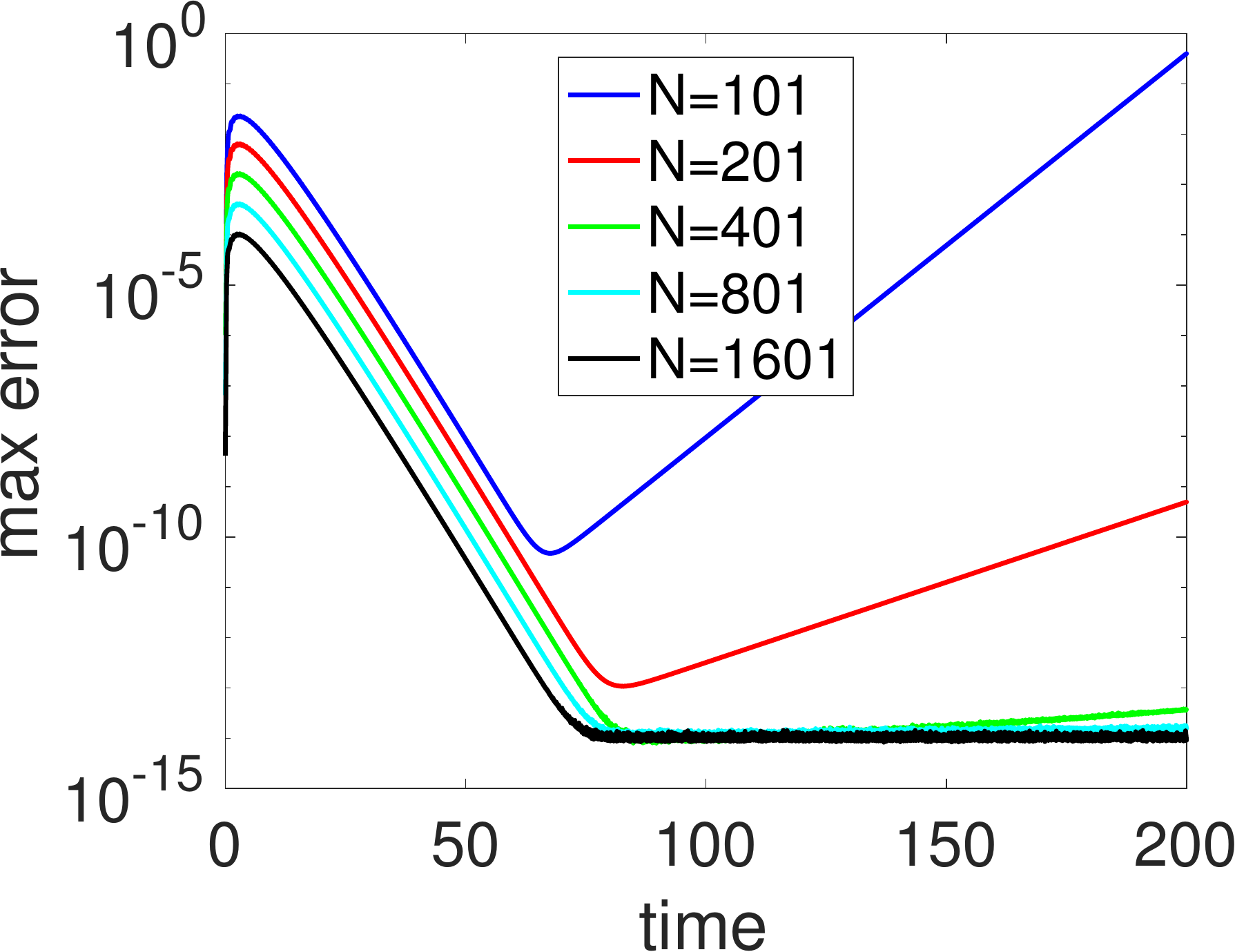} \hfill
  \includegraphics[width=.325\textwidth]{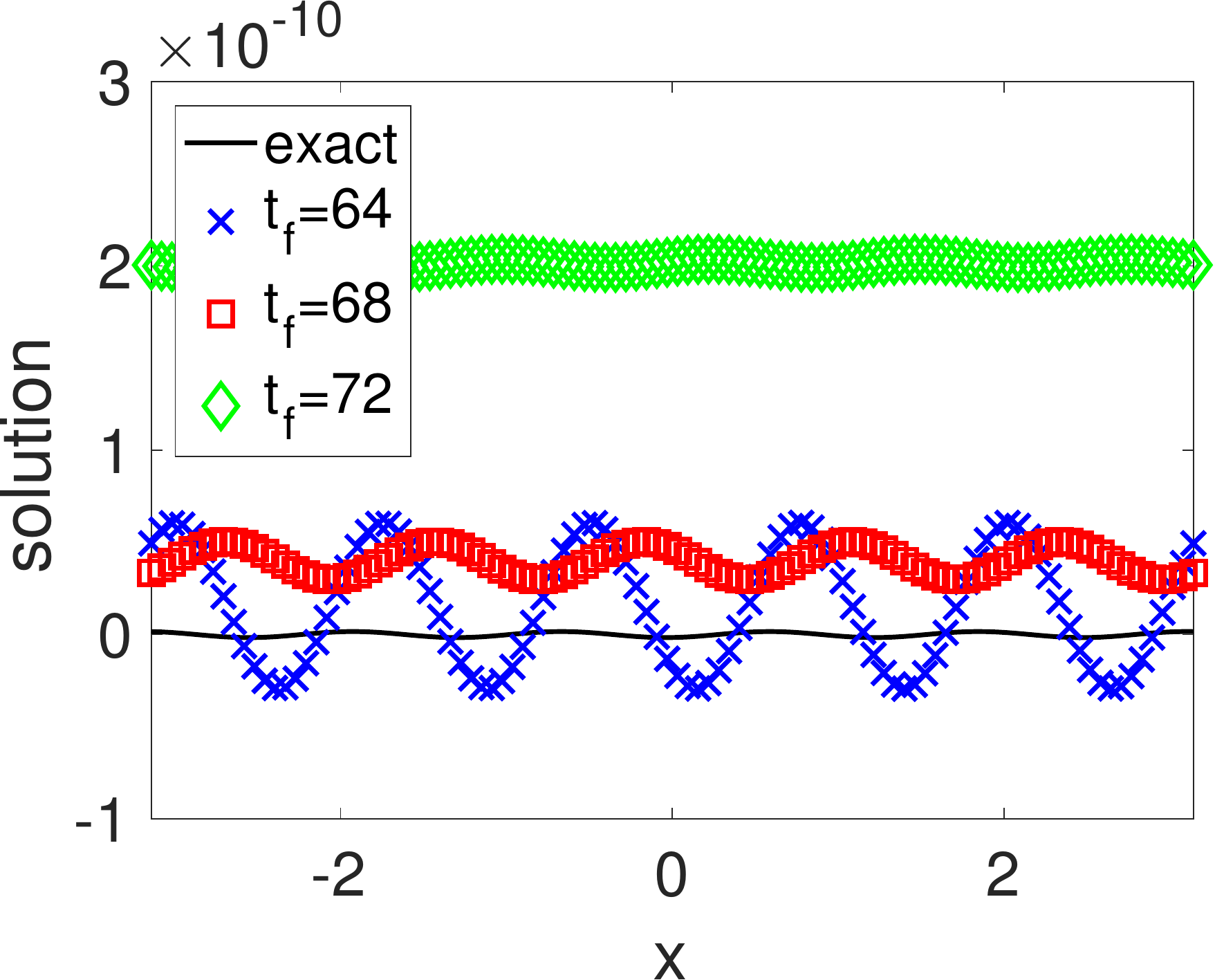} \hfill
  \includegraphics[width=.325\textwidth]{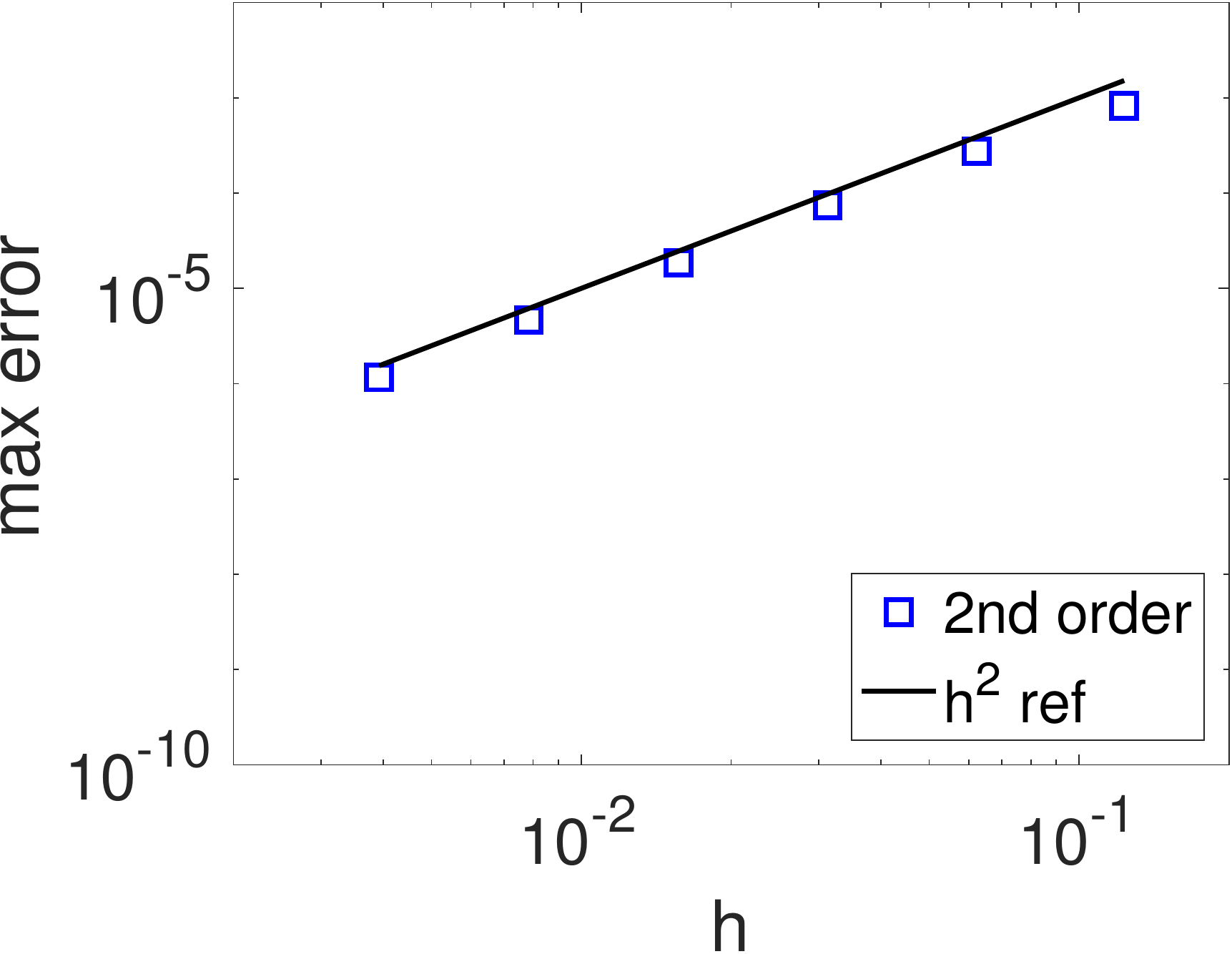}
  \caption{Numerical results using the second-order accurate scheme for a 1D periodic wave with $c=1$, $\omega_p=3$, $\epsilon_r=1$, and $\gamma=10$. At left is the time history of the maximum error for a variety of grid sizes, which simultaneously shows the presence of exponentially growing modes as well as numerical convergence. In the center, the nature of the growing solution is illustrated to be dominated by the constant mode for the computation with $N=101$ points. Finally, at right the expected second-order convergence is demonstrated at $t=20$.}
  \label{fig:RC2_conv}
  \end{center}
\end{figure}

\begin{figure}[hbt]
  \begin{center}
  \includegraphics[width=.325\textwidth]{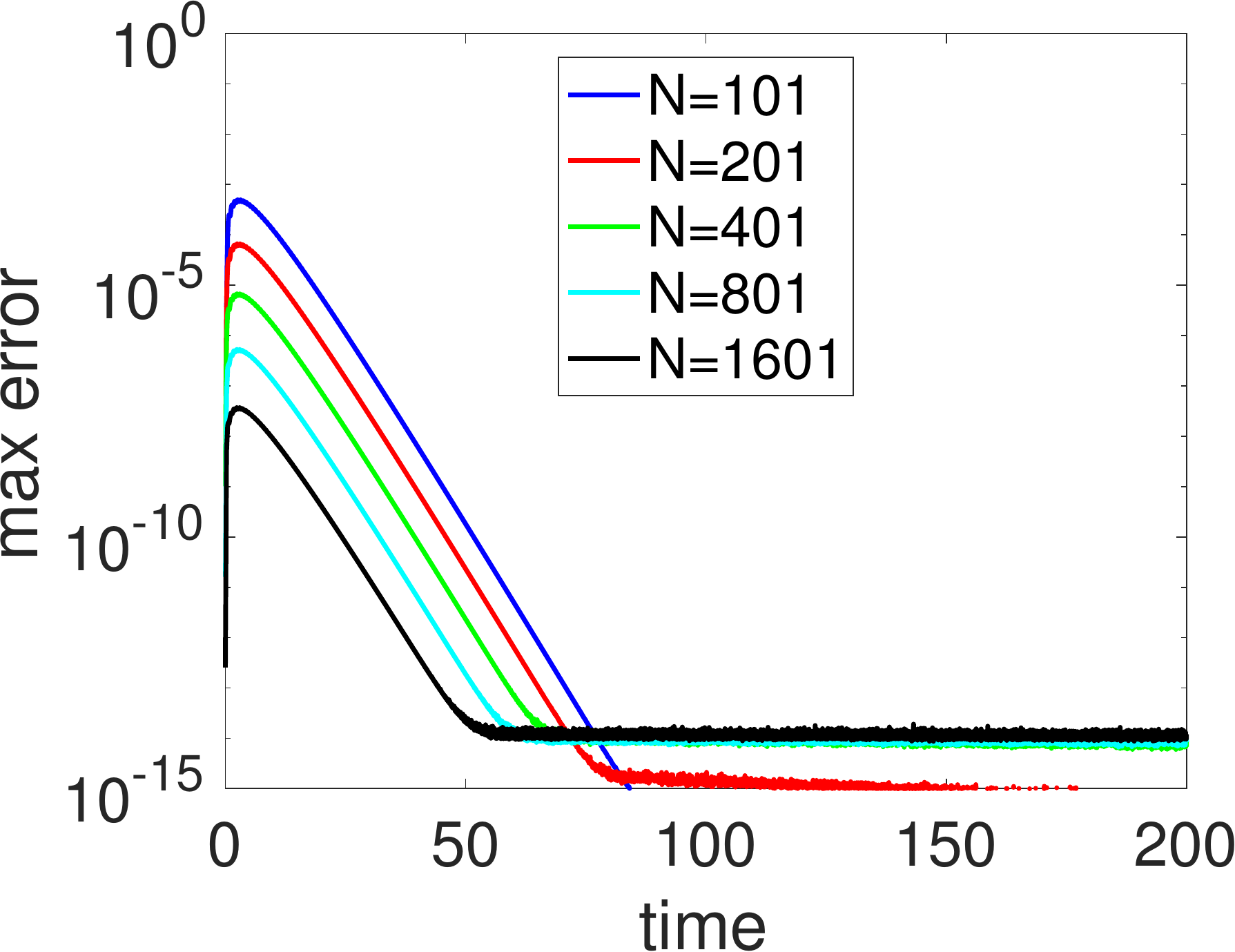} \hfill
  \includegraphics[width=.325\textwidth]{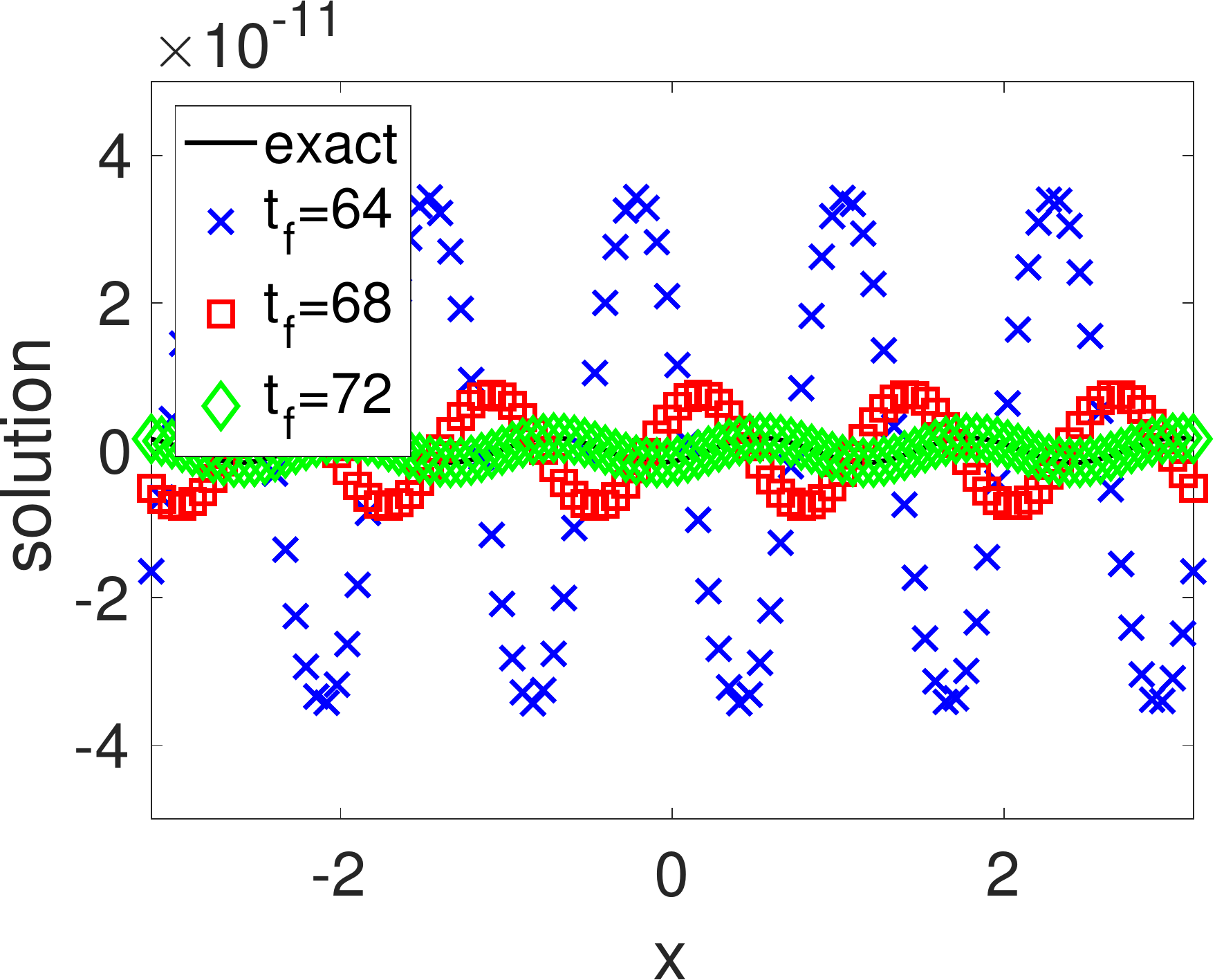} \hfill
  \includegraphics[width=.325\textwidth]{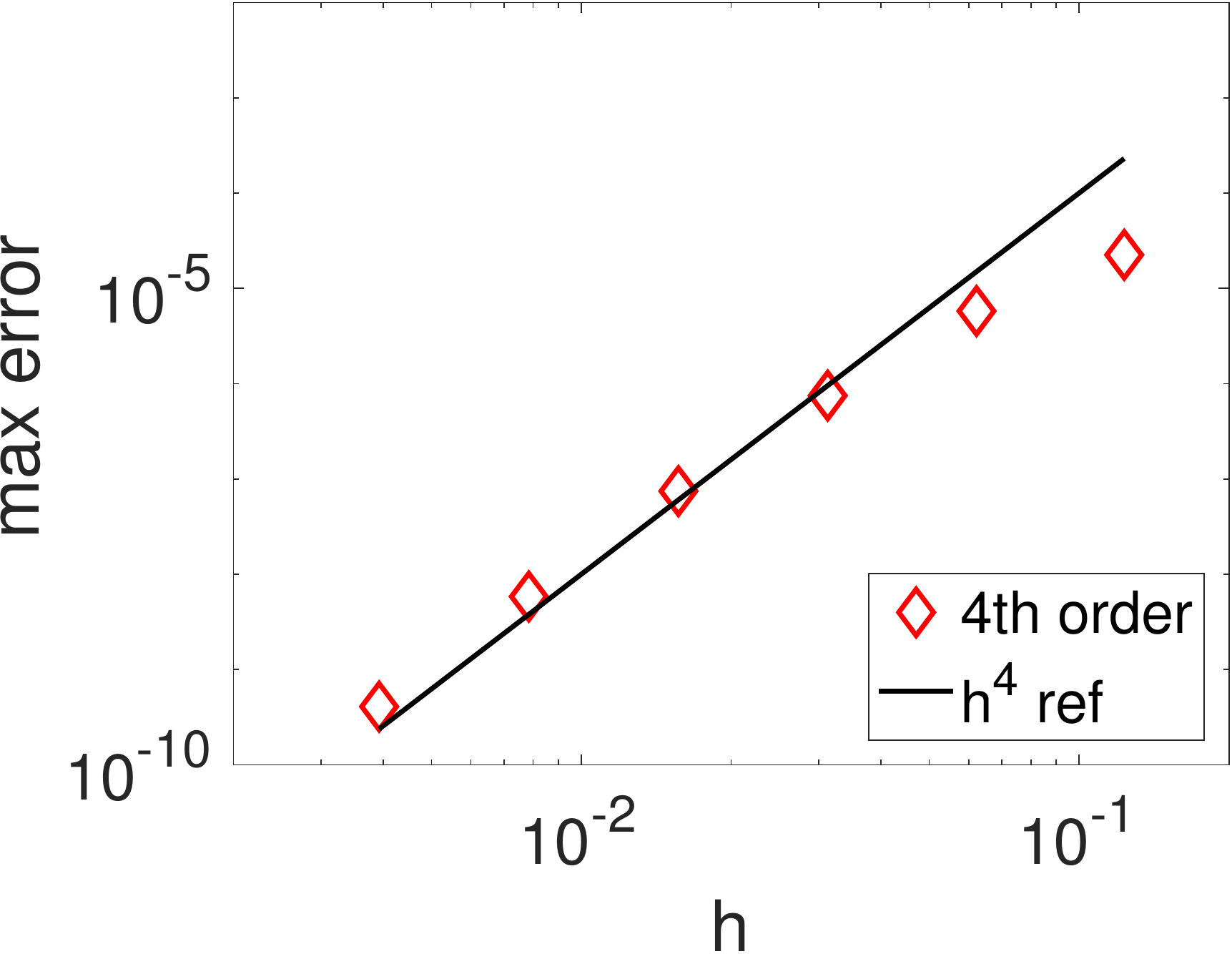} \hfill
  \caption{Numerical results using the fourth-order accurate scheme for a 1D periodic wave with $c=1$, $\omega_p=3$, $\epsilon_r=1$, and $\gamma=10$. The meaning of the panels here mirror those in Figure~\ref{fig:RC2_conv}. At left is the time history of the maximum error for a variety of grid sizes which shows no exponentially growing modes. In the center is the solution at the same times as in the center of Figure~\ref{fig:RC2_conv}, although here there is clearly no spurious growth. Finally, at right the expected 4th order convergence is demonstrated at $t=20$.}
  \label{fig:RC4_conv}
  \end{center}
\end{figure}

\subsection{Example: Scattering on Planar Drude Material Interface in Two Dimensions}
\label{sec:planeScattering}

Consider now a scattering problem in two dimensions at a planar interface between a non-dispersive dielectric and a Drude dispersive medium, located at $x = x_{\rm mid}$. We consider the TE$_z$ mode (transverse electric mode with respect to z), which has components $(E_x, E_y, H_z)$. Note that the numerical method computes only the electric fields $(E_x,E_y)$, and the magnetic field is derived from those computed electric field components. The material in the first domain, $\Omega_1 \equiv \{ (x,y) : x \leq x_{\rm mid} \} $, has solution components $(E_{x,1}, E_{y,1}, H_{z,1})$, and dielectric material parameters $\epsilon_{r,1}, \mu_1, \omega_{p,1} = 0,$ and $ \gamma_1 = 0$. The Drude material in the second domain, $\Omega_2 \equiv \{ (x,y) : x \geq x_{\rm mid} \} $, has solution components $(E_{x,2}, E_{y,2}, H_{z,2})$, and Drude material parameters $\epsilon_{r,2}, \mu_2, \omega_{p,2},$ and $ \gamma_2$. The frequency dependent permittivities as defined in Section \ref{section:governing_equations} are given respectively in each domain by 
\begin{align}
\widehat{\epsilon_1}(\omega) = \epsilon_0 \epsilon_{r,1} , \qquad \qquad \widehat{\epsilon_2}(\omega) = \epsilon_0 \left( \epsilon_{r,1} - \frac{\omega_{p,2}^2}{\omega (i \gamma_2 - \omega) } \right). 
\end{align}
Each of the two components of the electric field are governed by the second-order form of the dispersive Maxwell's equations \eqref{eqn:waveE2}, with Table \ref{table:etas} giving the corresponding values of the integral kernel $\eta(\tau)$. In particular, for $x \in \Omega_1$ the fields obey the equations 
\begin{subequations}
\begin{align}
& \partial_t^2 E_{x,1}  =  \frac{1}{\mu_1 \epsilon_0 \epsilon_{r,1} }  \Delta E_{x,1} , \\
& \partial_t^2 E_{y,1} =  \frac{1}{\mu_1 \epsilon_0 \epsilon_{r,1} }  \Delta E_{y,1} , 
\end{align}
\end{subequations}
while for $x \in \Omega_2$ the governing equations describing the fields are
\begin{subequations}
\begin{align}
& \partial_t^2 E_{x,2} =  \frac{1}{\mu_2 \epsilon_0 \epsilon_{r,2} }  \Delta E_{x,2} - \frac{  \omega_{p,2} ^2 }{\epsilon_{r,2} } E_{x,2} + \frac{ \omega_{p,2}^2 \gamma_2 }{\epsilon_{r,2} } \int_0^{\infty} e^{- \gamma_2 \tau} E_{x,2}(t - \tau) d \tau, \\
& \partial_t^2 E_{y,2} =  \frac{1}{\mu_2 \epsilon_0 \epsilon_{r,2} }  \Delta E_{y,2} - \frac{  \omega_{p,2} ^2 }{\epsilon_{r,2} } E_{y,2} + \frac{ \omega_{p,2} ^2 \gamma_2 }{\epsilon_{r,2} } \int_0^{\infty} e^{- \gamma_2 \tau} E_{y,2}(t - \tau) d \tau.
\end{align}
\end{subequations}
The single component of the magnetic field in each domain is can be described from the electric fields using the Maxwell-Faraday equation \eqref{eqn:macro1} as
\begin{subequations}
\begin{align}
& \partial_t H_{z,1} = \frac{1}{\mu_1} \left( \partial_y E_{x,1} - \partial_x E_{y,1} \right), \\
& \partial_t H_{z,2} = \frac{1}{\mu_2} \left( \partial_y E_{x,2} - \partial_x E_{y,2} \right). 
\end{align}
\end{subequations}
The two domains are coupled by imposition of the physical jump conditions \eqref{eqns:int} along the material interface:
\begin{subequations}
\begin{align}
\left[ \mathbf{n} \times \mathbf{E} \right]_{\mathcal{I}} = 0 \qquad & \Rightarrow \qquad E_{y,1} = E_{y,2}, \\
\left[ \mathbf{n} \cdot \widehat{\mathbf{D}} \right]_{\mathcal{I}} = 0 \qquad & \Rightarrow \qquad \widehat{D_{x,1}}(\omega) = \widehat{\epsilon_1}(\omega) \widehat{E_{x,1}}(\omega) \\
& \hspace{2.44cm} =  \widehat{D_{x,2}}(\omega)  = \widehat{\epsilon_2}(\omega) \widehat{E}_{x,2} (\omega) , \nonumber   \\
\partial_t \left[ \mathbf{n} \times \mathbf{H} \right]_{\mathcal{I}} = 0  \qquad & \Rightarrow \qquad  \partial_t H_{z,1} = \frac{1}{\mu_1} \left( \partial_y E_{x,1} - \partial_x E_{y,1} \right) =  \partial_t H_{z,2}  \\
& \hspace{2.25cm} = \frac{1}{\mu_2} \left( \partial_y E_{x,2} - \partial_x E_{y,2} \right), \nonumber
\end{align}
\end{subequations}
Here, we have applied the Fourier transform to simplify the second condition and have taken a time derivative of the third.

An exact solution consisting of incident, scattered, and transmitted fields is derived following the well-known procedure used in the non-dispersive case of an interface between two dielectrics \cite{jackson_1962}. Let $\theta_i$ be the angle of incidence of the incident plane wave. The solution in the dielectric domain $\Omega_1$ has an incident and reflected component and therefore satisfies
\begin{subequations}
\begin{align}
& E_{x,1} = A_{x,1} \left( e^{i \mathbf{k_i(\omega)} \cdot \mathbf{x} - i \omega t} - R e^{i \mathbf{k_r(\omega)} \cdot \mathbf{x} - i \omega t} \right), \\
& E_{y,1} = A_{y,1} \left( e^{i \mathbf{k_i(\omega)} \cdot \mathbf{x} - i \omega t} + R e^{i \mathbf{k_r(\omega)} \cdot \mathbf{x} - i \omega t} \right),
\end{align}
\end{subequations}
while the Drude domain, $\Omega_2$, is assumed to have only transmitted component and therefore satisfies
\begin{subequations}
\begin{align}
& E_{x,2} = A_{x,2} T  e^{i \mathbf{k_t} (\omega) \cdot \mathbf{x} - i \omega t}, \\
& E_{y,2} = A_{y,2} T e^{i \mathbf{k_t} (\omega) \cdot \mathbf{x} - i \omega t}.
\end{align}
\end{subequations}
In the present case, the matching conditions at the interface then yield
\begin{subequations}
\begin{align}
\mathbf{k_i(\omega)} & = \omega \sqrt{ \widehat{\epsilon_1}(\omega) \mu_1} \left( \cos \theta_i, \sin \theta_i \right)^T = ( k_{i,x}, k_{i,y} )^T, \\
\mathbf{k_r(\omega)} & =  \omega \sqrt{ \widehat{\epsilon_1}(\omega) \mu_1} \left( - \cos \theta_i, \sin \theta_i \right)^T =  ( k_{r, x}, k_{r,y} )^T, \\
\mathbf{k_t(\omega)} & = \omega \sqrt{ \widehat{\epsilon_2}(\omega) \mu_2} \left(\cos \theta_t, \sin \theta_t \right)^T = (  k_{t,x}, k_{t,y} )^T ,  \\
\theta_t & = \sin^{-1} \left( \frac{ \sqrt{ \widehat{ \epsilon_1} (\omega) \mu_1} } { \sqrt{ \widehat{ \epsilon_2} (\omega) \mu_2} } \sin \theta_i \right), \\
R & = e^{2 i k_{i,x}  x_{\text{mid} }} \left( \frac{ \sqrt{ \widehat{\epsilon_1}(\omega) \mu_1 } \cos( \theta_t )  - \sqrt{ \widehat{\epsilon_2}(\omega) \mu_2 } \cos( \theta_i )  }
	{ \sqrt{ \widehat{\epsilon_1}(\omega) \mu_1 } \cos( \theta_t )  + \sqrt{ \widehat{\epsilon_2}(\omega) \mu_2 } \cos( \theta_i )} \right), \\
T & = e^{ i \left( k_{i,x} - k_{t,x} \right) x_{\text{mid}} } \left( \frac{ 2  \sqrt{ \widehat{\epsilon_1 }(\omega) \mu_1 } \cos( \theta_i )  }
	{ \sqrt{ \widehat{\epsilon_1}(\omega) \mu_1 } \cos( \theta_t )  + \sqrt{ \widehat{\epsilon_2}(\omega) \mu_2 } \cos( \theta_i )} \right) , \\
A_{x,1} & = A \sin \theta_i , \\
A_{y,1} & = - A \cos \theta_i , \\
A_{x,2} & = A \sin \theta_t , \\
A_{y,2} & = - A \cos \theta_t . 
\end{align}
\end{subequations}
where $A$ is an arbitrary constant. Note that $\theta_t$ is complex-valued, and one has
\begin{subequations}
\begin{align}
\sin \theta_t & = \frac{ \sqrt{ \widehat{ \epsilon_1} (\omega) \mu_1} } { \sqrt{ \widehat{ \epsilon_2} (\omega) \mu_2} } \sin \theta_i , \\
\cos \theta_t & = \left( 1 - \frac{\widehat{ \epsilon_1} (\omega) \mu_1}{\widehat{ \epsilon_2} (\omega) \mu_2}  \sin^2 \theta_i \right)^{1/2}. 
\end{align}
\end{subequations}

In the left domain $\Omega_1$, we consider vacuum with $\omega_{p,1} = \gamma_1 = 0$ and $\epsilon_{r,1} = \mu_1 = 1$. In the right domain, $\Omega_2$, we consider silver with $\epsilon_{r,2} = 5$, $\mu_2 = 1$, $\omega_{p,2} = 8.9$ eV, and $\gamma_2 = (17)^{-1}$ THz \cite{yang_2015}. The plane wave is incident on the interface at $x_{\text{mid}} = 0$ at an angle of $\theta_i = \pi/5$. The scattered fields and their errors for the second- and fourth-order accurate schemes using two incident frequencies $\omega_1 = 1000$ THz and $\omega_2 = 1300$ THz, are shown respectively in Figures \ref{fig:scattered_fields_1} and \ref{fig:scattered_fields_2}. We remark that these frequencies are higher than the optical range to which the Drude parameters given in \cite{yang_2015} are fit, and are used to produce reasonable field figures which illustrate the properties of the derived Drude model solution. As predicted by the classical Drude theory \cite{maier_2007}, the transmitted wave in silver is strongly evanescent at lower frequencies, while for larger frequencies closer to the plasma frequency $\omega_{p,2}$ of the metal, the wave is transmitted over a longer distance. Note that for both the second- and fourth-order accurate schemes, the numerical errors are smooth in each domain, and show no signs of spurious singular behavior near the material interface. This favorable property is a direct result of the accurate numerical treatment of the interface conditions using compatibility conditions as outline in Section~\ref{sec:interface}. In addition, Figure \ref{fig:rates} presents results of convergence studies for each scheme using the spatial $L_1$, $L_2$, and $L_{\infty}$ norms defined by 
\begin{subequations}
\begin{align}
& \| \mathbf{u}_{\jv}^n  \|_{L_1} = \frac{1}{N_x N_y} \sum_{j_x = 0}^{N_x} \sum_{j_y = 0}^{N_y} | \mathbf{u}_{\jv}^n | ,  \\
& \|  \mathbf{u}_{\jv}^n  \|_{L_2} = \left( \frac{1}{N_x N_y} \sum_{j_x = 0}^{N_x} \sum_{j_y = 0}^{N_y} | \mathbf{u}_{\jv}^n |^2 \right)^{1/2}  , \\
& \| \mathbf{u}_{\jv}^n  \|_{L_{\infty}}  = \underset{ \jv \in [0, N_x] \times [0, N_y] }{\max}  \ | \mathbf{u}_{\jv}^n | .
\end{align}
\label{eqn:normdefs}
\end{subequations}
Here, $N_x$ is the number of grid points in the $x$ direction, $N_y$ is the number of grid points in the $y$ direction, $\jv = (j_x, j_y) \in [0, N_x] \times [0, N_y]$. For the convergence study on the domain $\xv = (x,y) \in [0,x_{\text{max}}] \times [0,y_{\text{max}}]$, where $0 < x_{\text{mid}} < x_{\text{max}}$, the grid spacings $h_1$ and $h_2$ in the $x$ and $y$ direction are refined such that $N_x = x_{\text{max}} / h_1$, $N_y = y_{\text{max}} / h_2$. The final time $T = n \Delta t$ remains fixed, with $\Delta t$ chosen according to the stability bounds \eqref{eq:RC2_time_step} and \eqref{eq:RC4_time_step}-\eqref{eq:RC4_GammaBound}. The results in Figure \ref{fig:rates} illustrate the expected order of convergence for all cases.

\begin{figure}[h] 
\includegraphics[width=.325\textwidth]{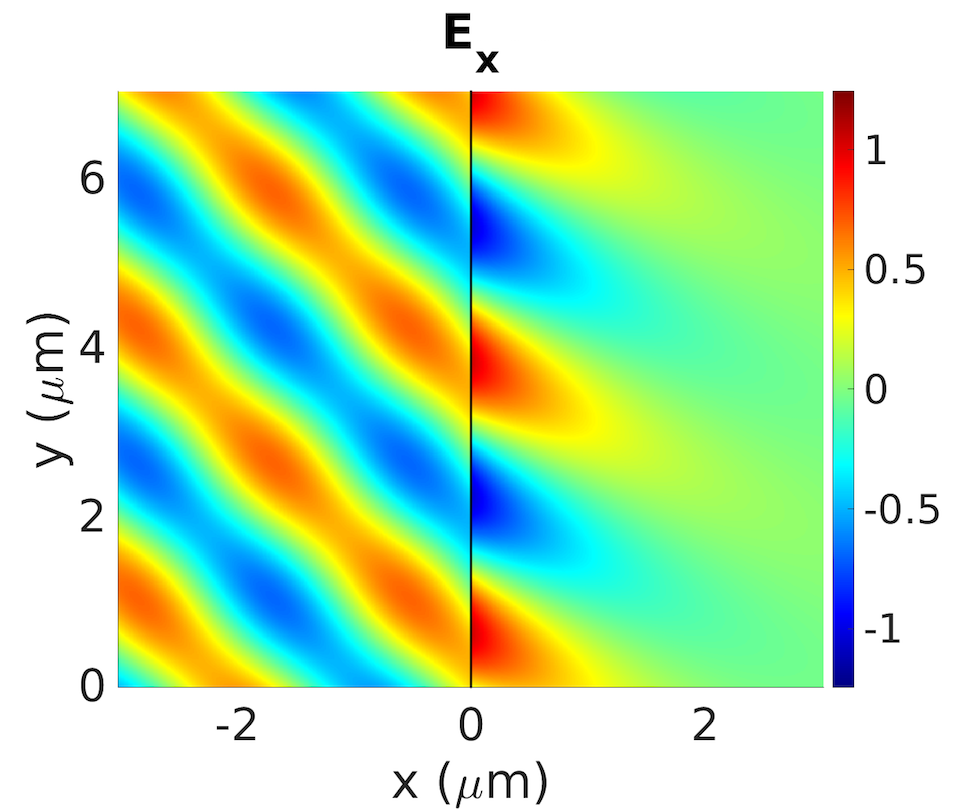} \hfill
\includegraphics[width=.325\textwidth]{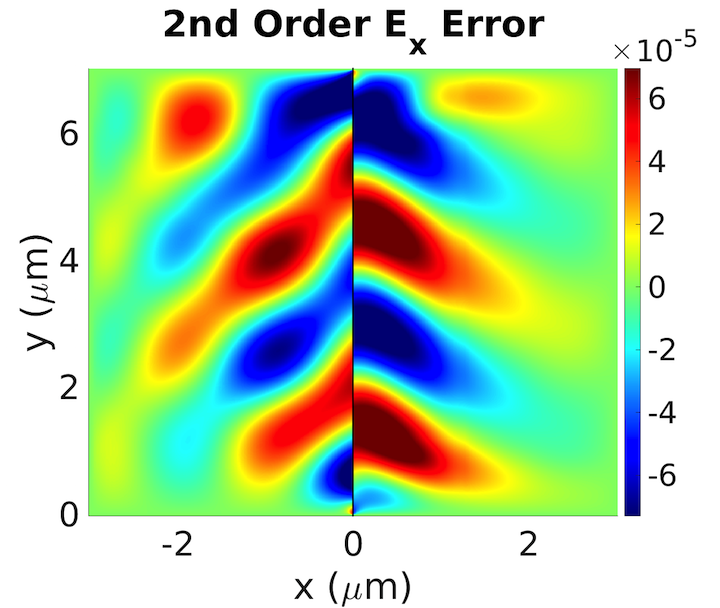} \hfill
\includegraphics[width=.325\textwidth]{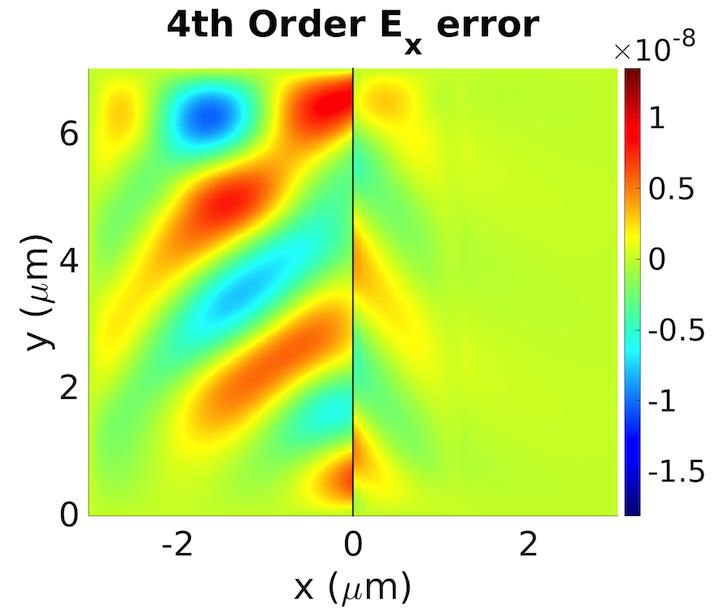}
\\
\includegraphics[width=.325\textwidth]{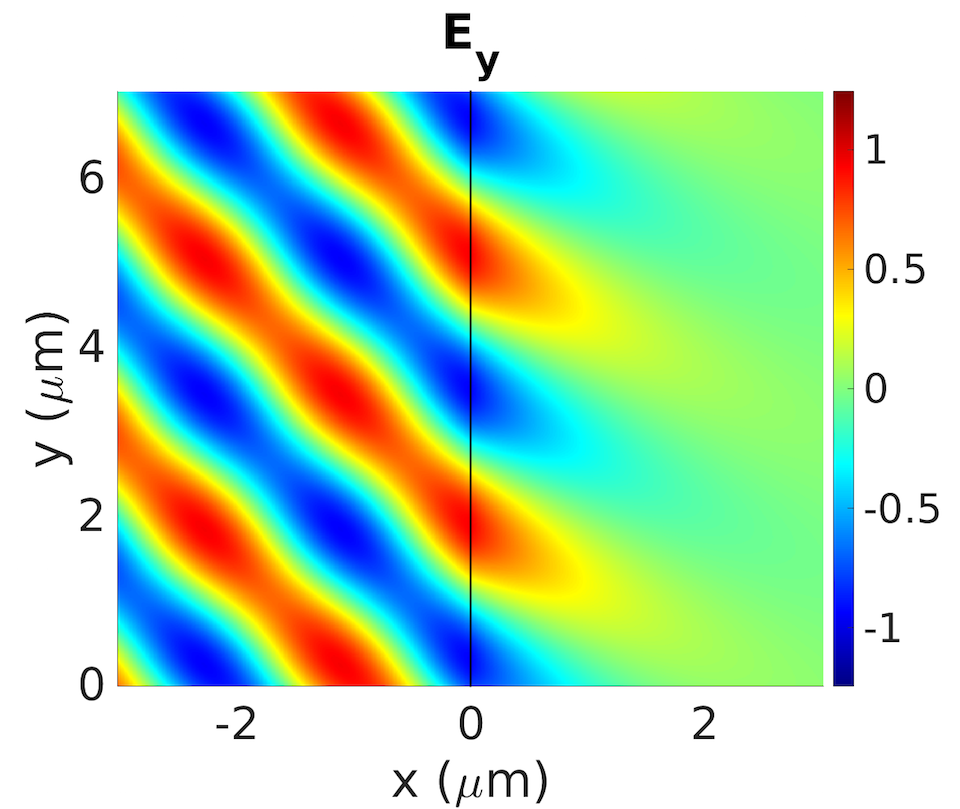} \hfill
\includegraphics[width=.325\textwidth]{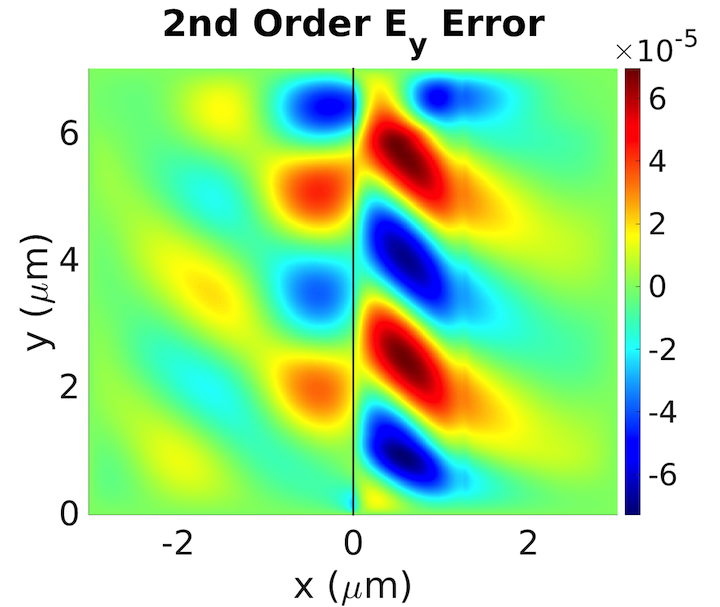} \hfill
\includegraphics[width=.325\textwidth]{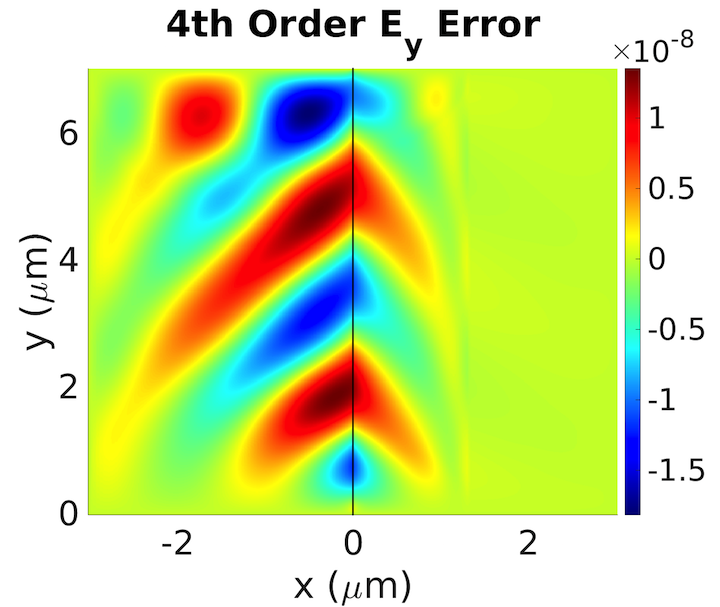}
\caption{Electric fields and their errors for the two-dimensional scattering problem at interface between vacuum medium and Drude silver medium \cite{yang_2015}, computed with the second-order and fourth-order schemes. Here, the wave is incident from the left with frequency $\omega_1 = 1000$ THz at angle $\theta_i = \pi/5$, the displayed time is $t = 1 \times 10^{-14}$ s, and the bounds \eqref{eq:RC2_time_step} and \eqref{eq:RC4_time_step}-\eqref{eq:RC4_GammaBound} were used to choose an appropriate time step which guarantees stability. From left to right, the top row of plots correspond to the fields $E_x$ for the second-order scheme, the error in $E_x$ for the second-order scheme, and the error in $E_x$ for the fourth-order scheme. The bottom row of plots is similar but for the $E_y$ component of the field.}
\label{fig:scattered_fields_1}
\end{figure}

\begin{figure}[h] 
\includegraphics[width=.325\textwidth]{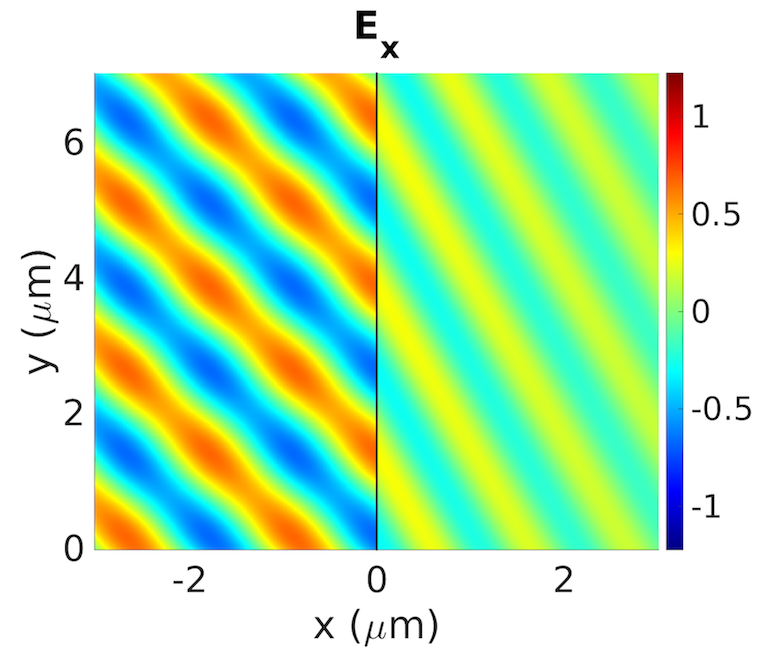} \hfill
\includegraphics[width=.325\textwidth]{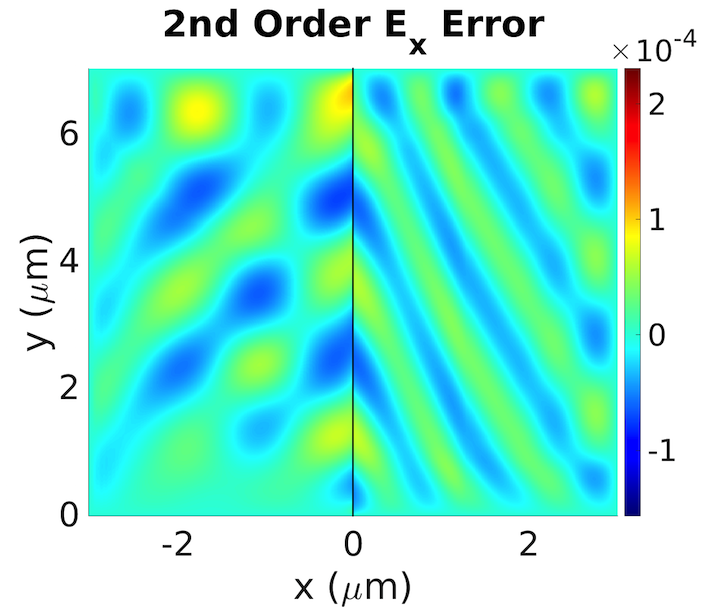} \hfill
\includegraphics[width=.325\textwidth]{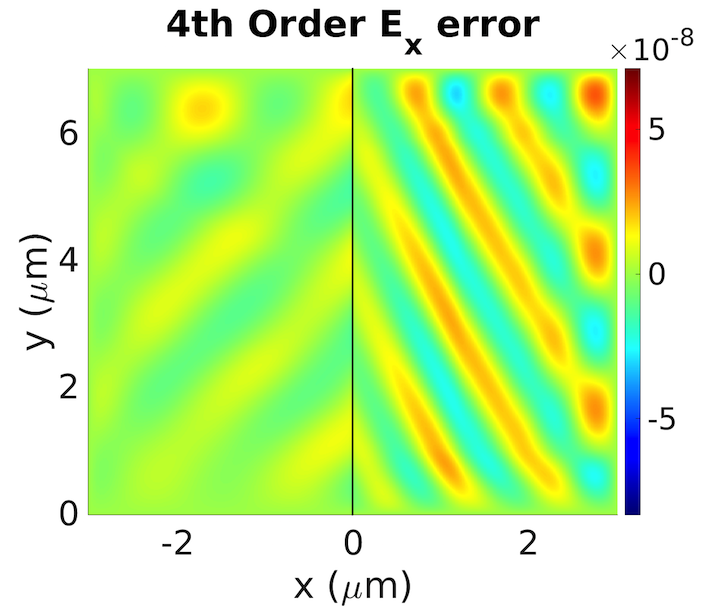}
\\
\includegraphics[width=.325\textwidth]{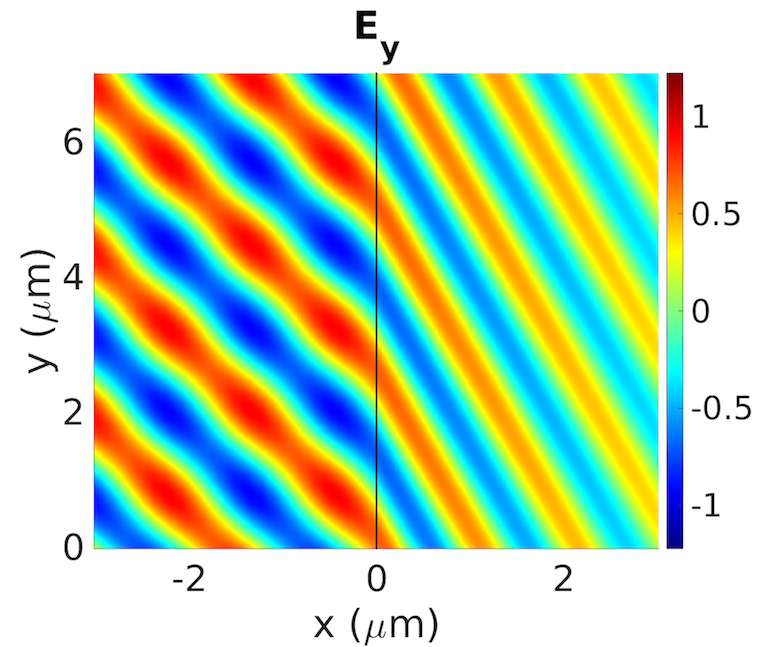} \hfill
\includegraphics[width=.325\textwidth]{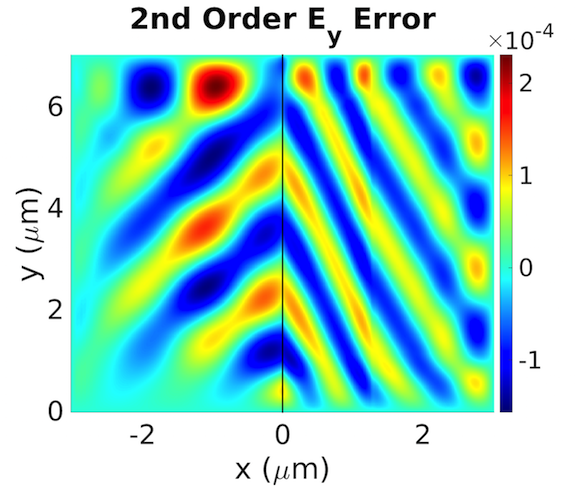} \hfill
\includegraphics[width=.325\textwidth]{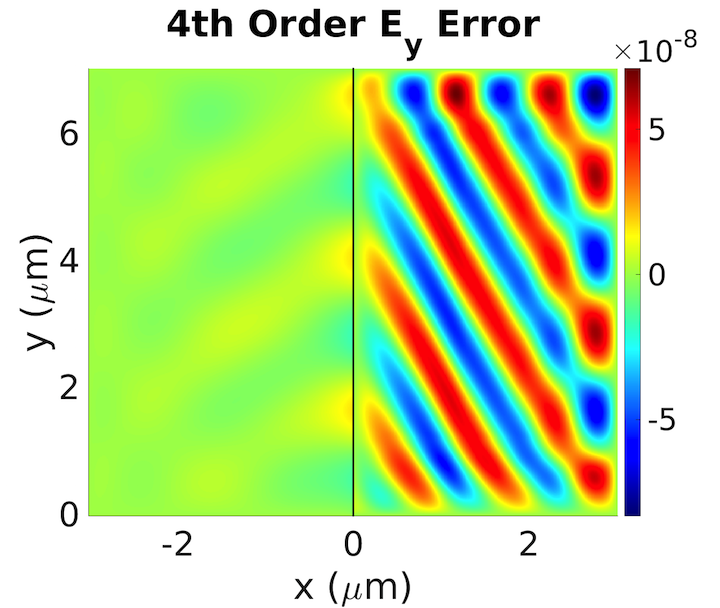}
\caption{Electric fields and their errors for the two-dimensional scattering problem at interface between vacuum medium and Drude silver medium \cite{yang_2015}, computed with the second-order and fourth-order schemes. Here, the wave is incident from the left with frequency $\omega_2 = 1300$ THz at angle $\theta_i = \pi/5$, the displayed time is $t = 1 \times 10^{-14}$ s, and the bounds \eqref{eq:RC2_time_step} and \eqref{eq:RC4_time_step}-\eqref{eq:RC4_GammaBound} were used to choose an appropriate time step which guarantees stability. From left to right, the top row of plots correspond to the fields $E_x$ for the second-order scheme, the error in $E_x$ for the second-order scheme, and the error in $E_x$ for the fourth-order scheme. The bottom row of plots is similar but for the $E_y$ component of the field.}
\label{fig:scattered_fields_2}
\end{figure}

\begin{figure}[h] 
\includegraphics[width=.325\textwidth]{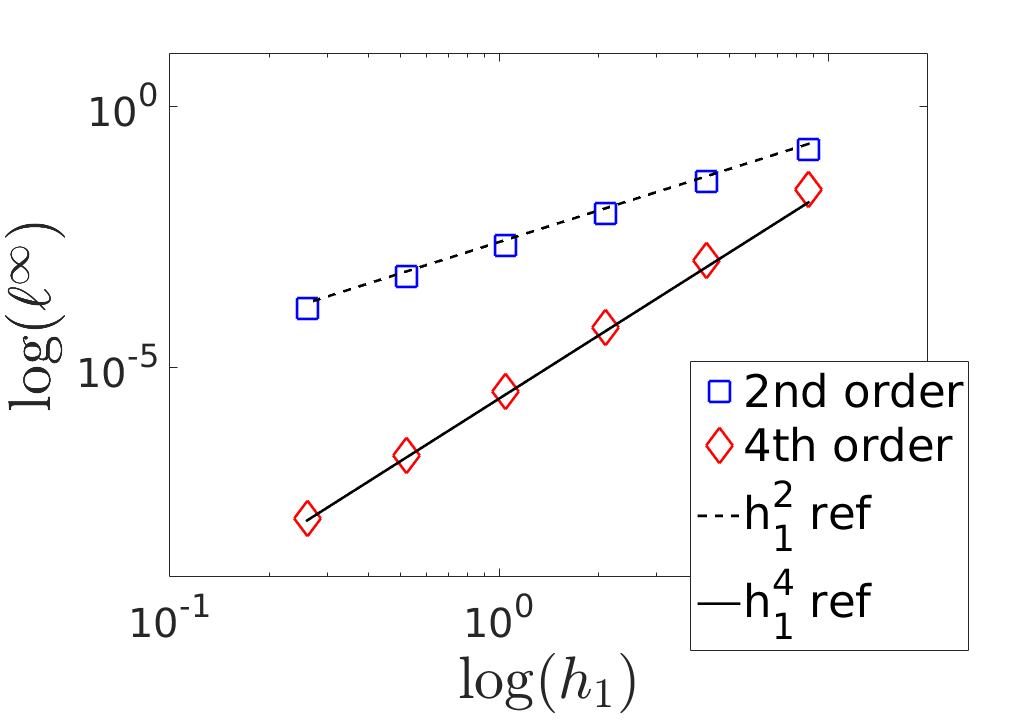} \hfill
\includegraphics[width=.325\textwidth]{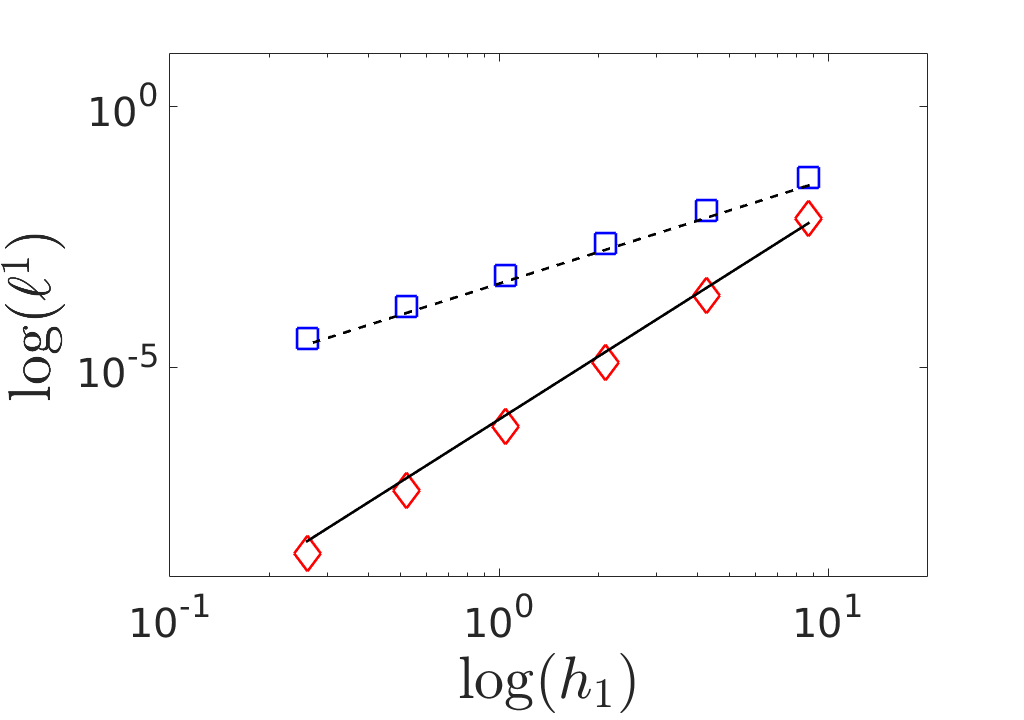} \hfill
\includegraphics[width=.325\textwidth]{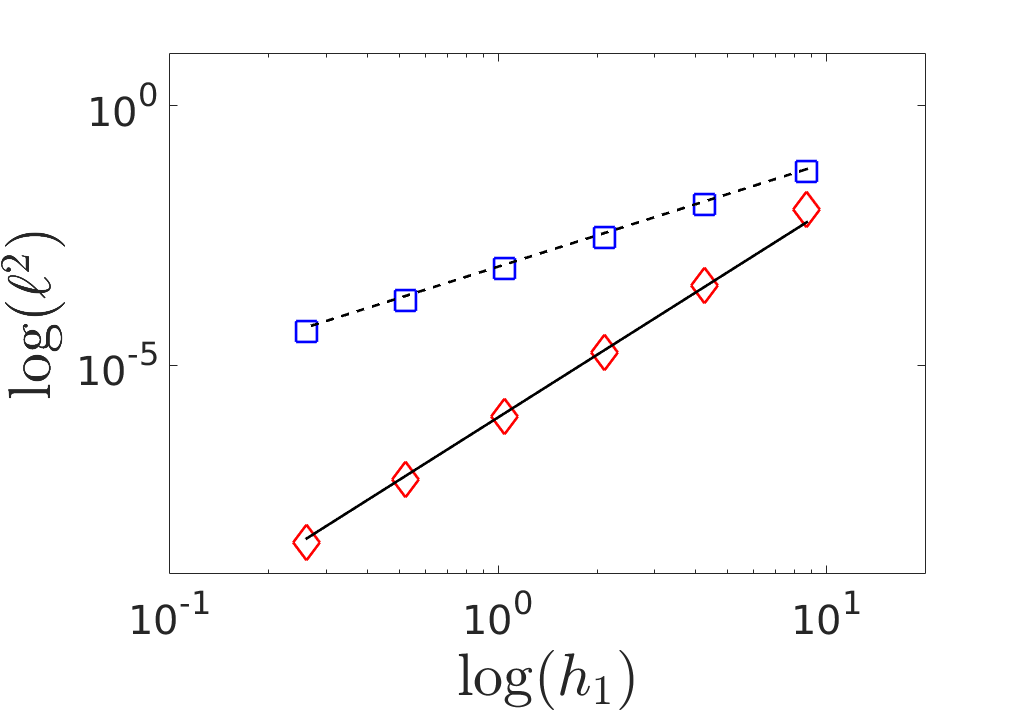} \\
\includegraphics[width=.325\textwidth]{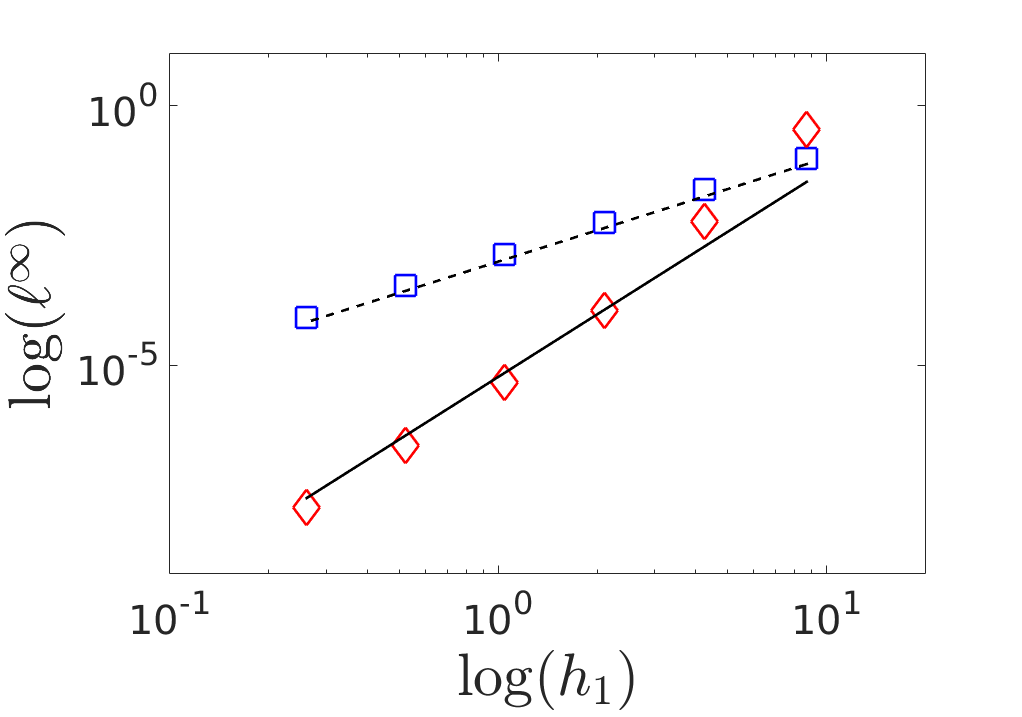} \hfill
\includegraphics[width=.325\textwidth]{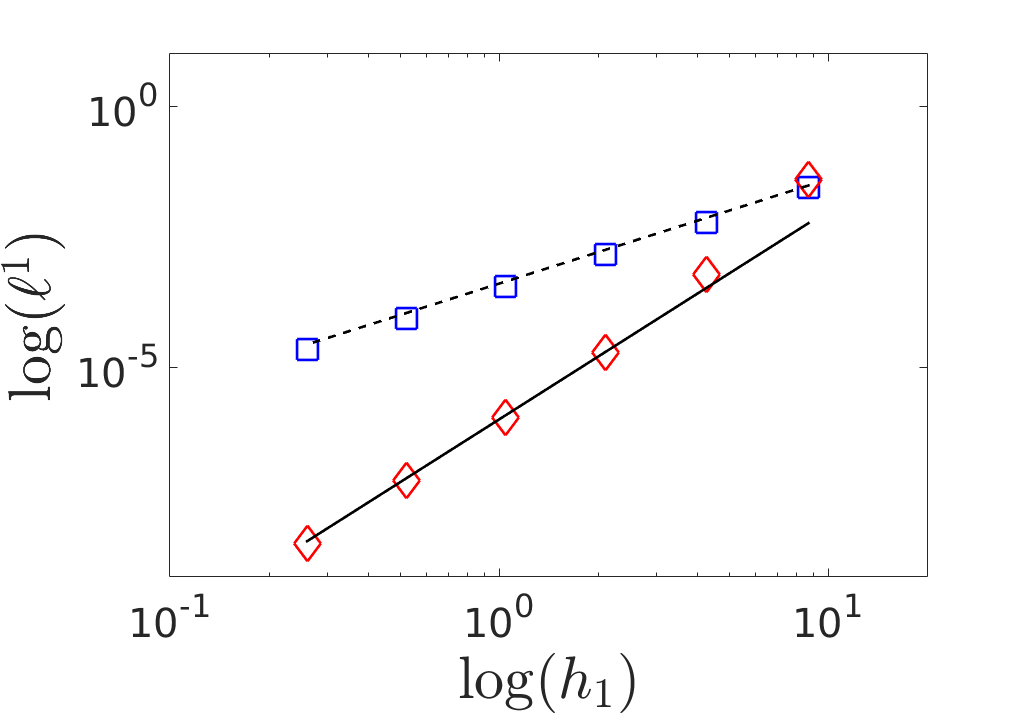} \hfill
\includegraphics[width=.325\textwidth]{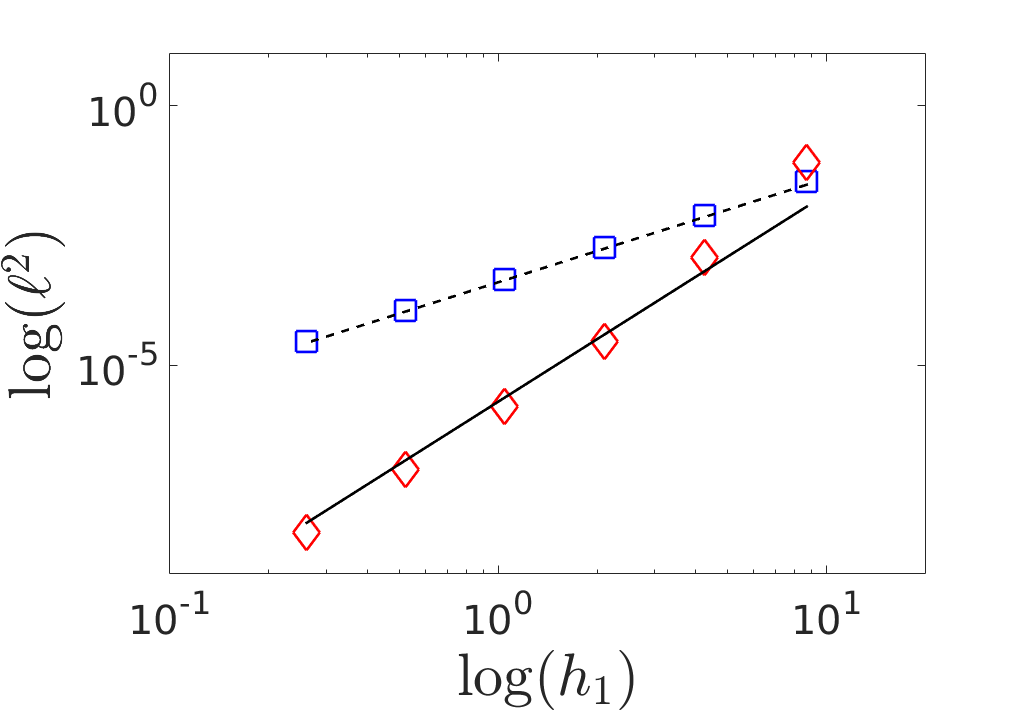}
\caption{Convergence rates for the two-dimensional scattering problem at interface between vacuum and Drude silver medium with $\omega = 1000$ THz (see Figures \ref{fig:scattered_fields_1} and \ref{fig:scattered_fields_2}). The bounds \eqref{eq:RC2_time_step} and \eqref{eq:RC4_time_step}-\eqref{eq:RC4_GammaBound} were used to choose an appropriate time step which guarantees stability. From left to right, the top row shows $L_{\infty}$, $L_1$, and $L_2$ norms for the $E_x$ field. The corresponding $E_y$ rates are shown in the bottom row of plots.}
\label{fig:rates}
\end{figure}

\subsection{Example: Surface Plasmon Polariton at Drude Material Interface in Two Dimensions}
\label{sec:polariton}

As in Section \ref{sec:planeScattering}, we again consider an interface in two spatial dimensions between a non-dispersive dielectric and a Drude dispersive medium located at the plane $x = x_{\rm mid}$, and consider the TE$_z$ mode (transverse electric mode with respect to z), with components $(E_x, E_y, H_z)$. The material in the first domain, $\Omega_1 \equiv \{ (x,y) : x \leq x_{\rm mid} \} $, has dielectric material parameters $\epsilon_{r,1}, \mu_1, \omega_{p,1} = 0,$ and $ \gamma_1 = 0$, while the Drude material in the second domain, $\Omega_2 \equiv \{ (x,y) : x \geq x_{\rm mid} \} $, has Drude material parameters $\epsilon_{r,2}, \mu_2, \omega_{p,2},$ and $ \gamma_2$. The frequency dependent permittivities as defined in Section \ref{section:governing_equations} are given respectively in each domain by 
\begin{align}
\widehat{\epsilon_1}(\omega) = \epsilon_0 \epsilon_{r,1} , \qquad \qquad \widehat{\epsilon_2}(\omega) = \epsilon_0 \left( \epsilon_{r,1} - \frac{\omega_{p,2}^2}{\omega (i \gamma_2 - \omega) } \right). 
\end{align}
Following the derivation in \cite{maier_2007}, we consider the surface plasmon polariton, a mode which is localized to the interface at $x = x_{\rm mid}$ in the $x$-direction ({\it i.e.}, evanescent away from), and propagates in the $y$-direction. Additional details concerning the derivation of the surface plasmon polariton are given in Appendix \ref{app:polariton}. For $x \in \Omega_1$, the electric field components satisfy
\begin{subequations}
\begin{align}
& E_{x,1} = A \ e^{\alpha_1 (x - x_{\text{mid}})  } e^{i \beta y} e^{i \omega t} , \\
& E_{y,1} =  \frac{i \alpha_1}{\beta}  \ A \ e^{\alpha_1 (x - x_{\text{mid}})  } e^{i \beta y} e^{i \omega t} , 
\end{align}
\end{subequations}
while for $x \in \Omega_2$, we have
\begin{subequations}
\begin{align}
& E_{x,2} = \frac{\widehat{\epsilon_1}(\omega)}{\widehat{\epsilon_2}(\omega)} \ A  \  e^{\alpha_2 (x - x_{\text{mid}}) } \  e^{i \beta y} e^{i \omega t} , \\
& E_{y,2} = \frac{i \alpha_1}{\beta} \ A  \ e^{\alpha_2 (x - x_{\text{mid}})  } e^{i \beta y} e^{i \omega t} . 
\end{align}
\end{subequations}
Here, $A$ is an arbitrary constant and the dispersion relations in each direction are given by
\begin{subequations}
\begin{align}
& \beta =  \omega \left( \frac{ \widehat{\epsilon_1}(\omega) \widehat{\epsilon_2}(\omega) \left[ \mu_1 \widehat{\epsilon_2}(\omega) - \mu_2 \widehat{\epsilon_1}(\omega) \right] }{ \widehat{\epsilon_2}(\omega)^2 - \widehat{\epsilon_1}(\omega)^2 } \right)^{1/2} , \\
& \alpha_1 = \left(  \beta^2 - \omega^2 \mu_1 \widehat{\epsilon_1}(\omega) \right)^{1/2} =  \omega \widehat{\epsilon_1}(\omega) \left(  - \frac{ \left[ \mu_2 \widehat{\epsilon_2}(\omega) - \mu_1 \widehat{\epsilon_1}(\omega) \right] }{ \widehat{\epsilon_2}(\omega)^2 - \widehat{\epsilon_1}(\omega)^2 } \right)^{1/2} , \\
& \alpha_2 =  \left(  \beta^2 - \omega^2 \mu_2 \widehat{\epsilon_2}(\omega) \right)^{1/2} = \omega \widehat{\epsilon_2}(\omega) \left(  - \frac{ \left[ \mu_2 \widehat{\epsilon_2}(\omega) - \mu_1 \widehat{\epsilon_1}(\omega) \right] }{ \widehat{\epsilon_2}(\omega)^2 - \widehat{\epsilon_1}(\omega)^2 } \right)^{1/2}  = \frac{\alpha_1 \widehat{\epsilon_2}(\omega)}{\widehat{\epsilon_1}(\omega)} . 
\end{align}
\end{subequations}
Note that each of these expressions is typically complex-valued due to the complex permittivity of the Drude material. For certain frequencies which satisfy $\Re ( \widehat{\epsilon_2}(\omega) ) < 0$, we obtain $\alpha_1 > 0$ and $\alpha_2 < 0$, implying localization in the $x$-direction at $x = x_{\text{mid}}$. In addition, the fields will also decay in the $y$-direction of propagation according to the magnitude of the imaginary component of $\beta$. 

\begin{figure}[h] 
\includegraphics[width=.325\textwidth]{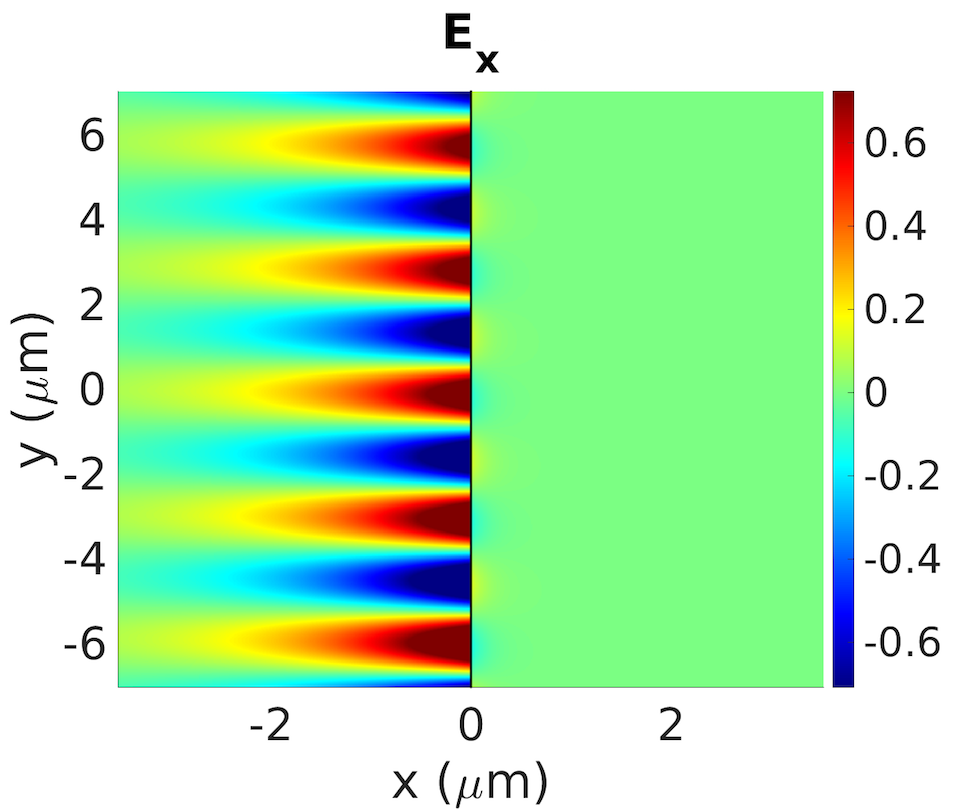} \hfill 
\includegraphics[width=.325\textwidth]{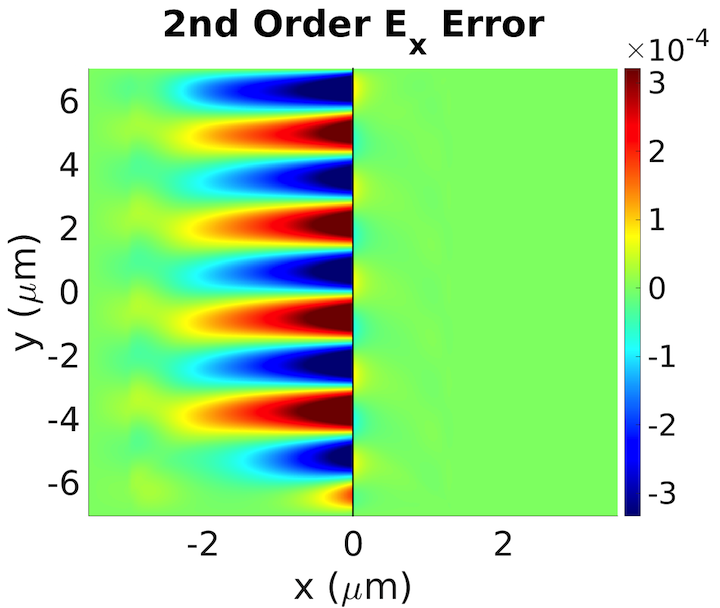} \hfill
\includegraphics[width=.325\textwidth]{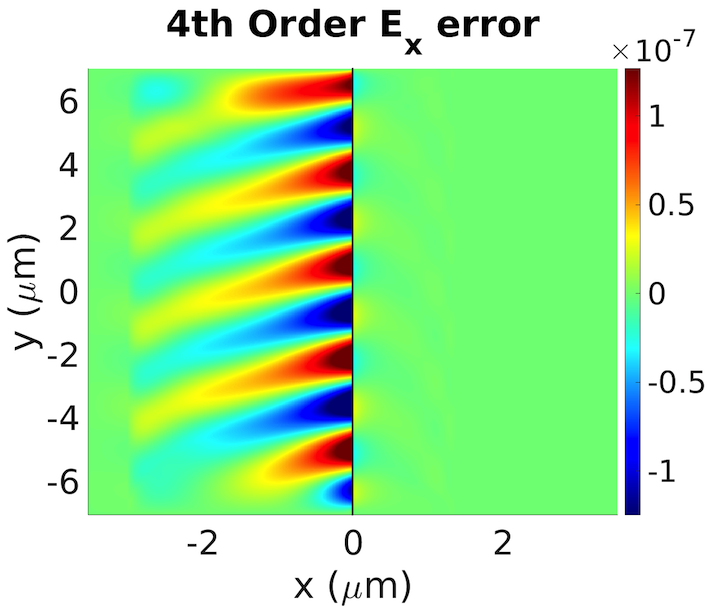}\\
\includegraphics[width=.325\textwidth]{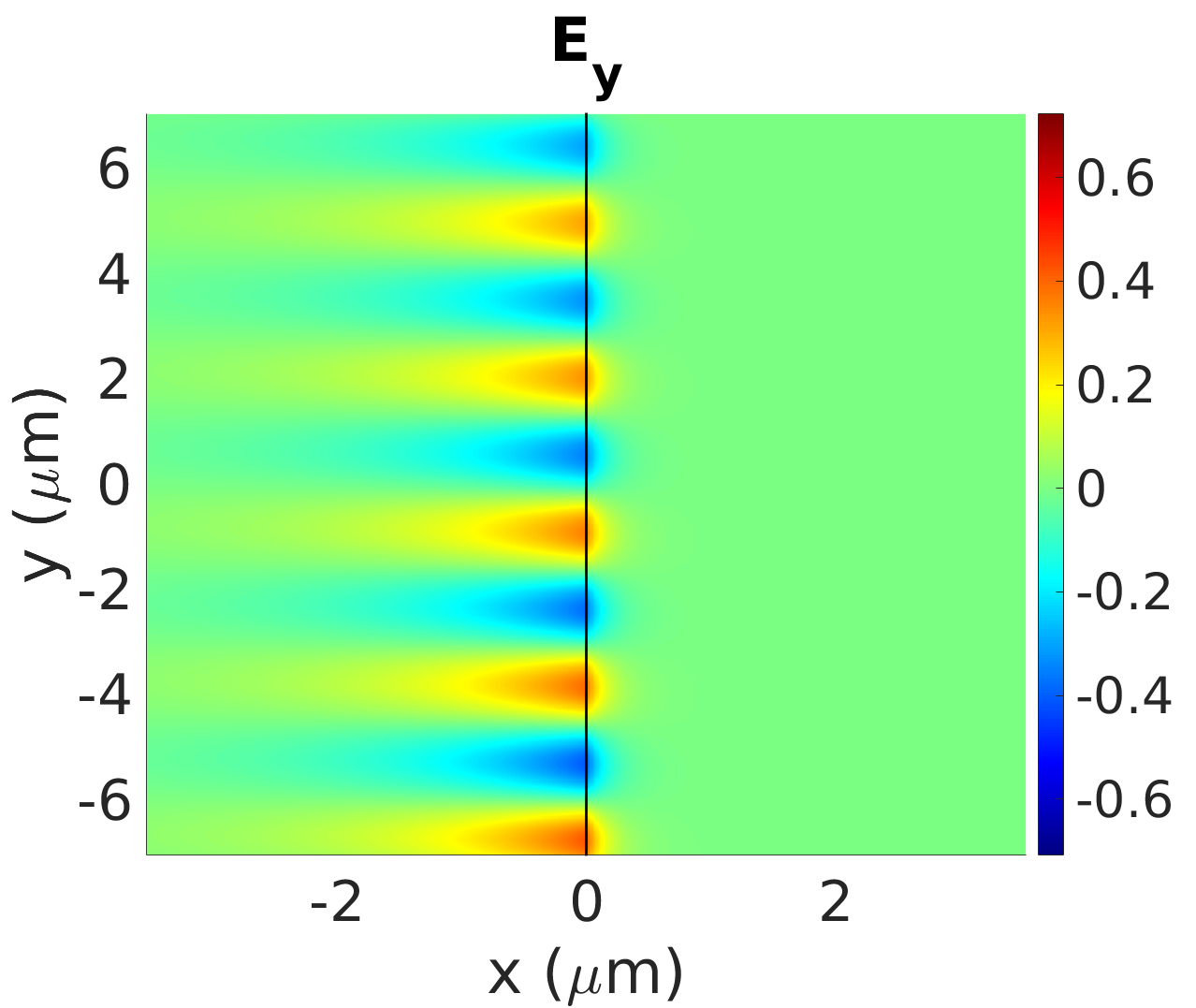} \hfill
\includegraphics[width=.325\textwidth]{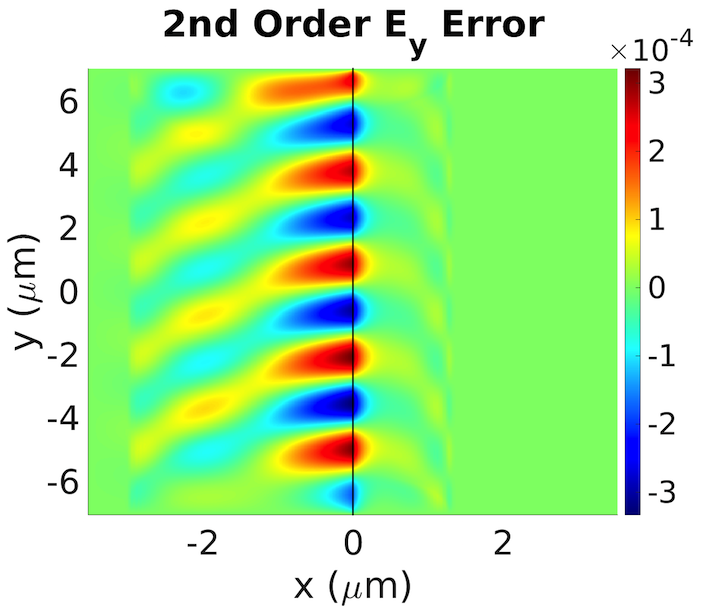} \hfill
\includegraphics[width=.325\textwidth]{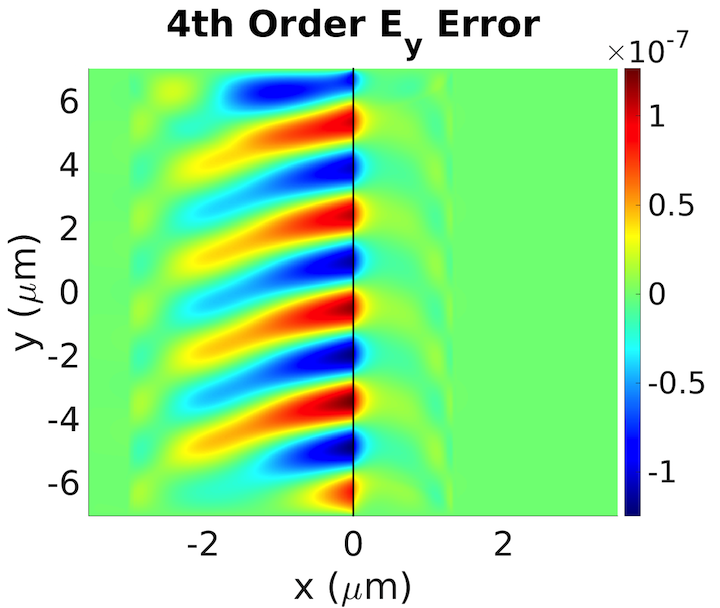}
\caption{Electric fields and their errors for the two-dimensional surface plasmon polariton at interface between vacuum medium and Drude silver medium \cite{olmon_2012}, computed with the second-order and fourth-order schemes. Here, the surface wave propagates in the positive $y$ direction at temporal frequency $\omega = 600$ THz, the displayed time is $t = 1 \times 10^{-14}$ s, and the bounds \eqref{eq:RC2_time_step} and \eqref{eq:RC4_time_step}-\eqref{eq:RC4_GammaBound} were used to choose an appropriate time step which guarantees stability. From left to right, the top row of plots correspond to the fields $E_x$ for the second-order scheme, the error in $E_x$ for the second-order scheme, and the error in $E_x$ for the fourth-order scheme. The bottom row of plots is similar but for the $E_y$ component of the field.}
\label{fig:plasmon_fields_1}
\end{figure}

\begin{figure}[h] 
\includegraphics[width=.325\textwidth]{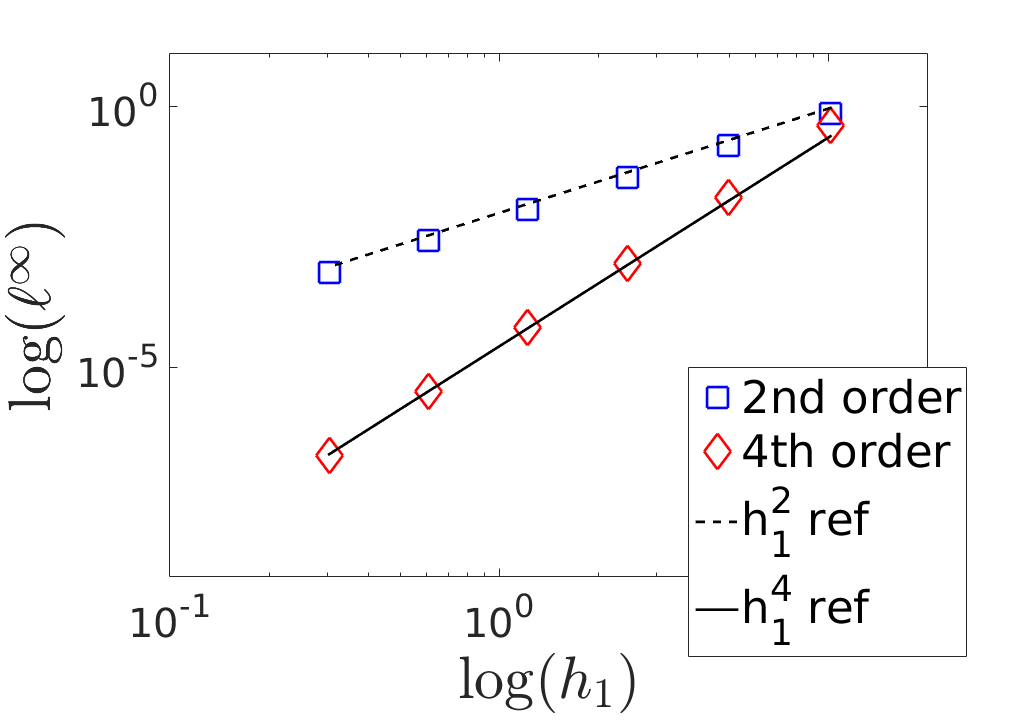} \hfill
\includegraphics[width=.325\textwidth]{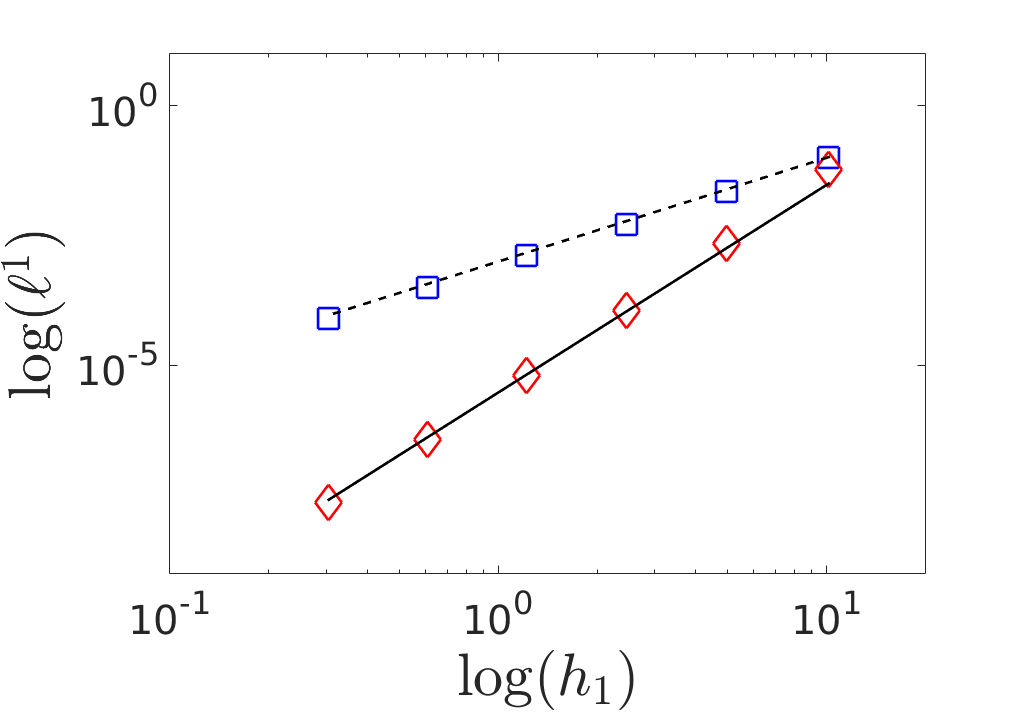} \hfill
\includegraphics[width=.325\textwidth]{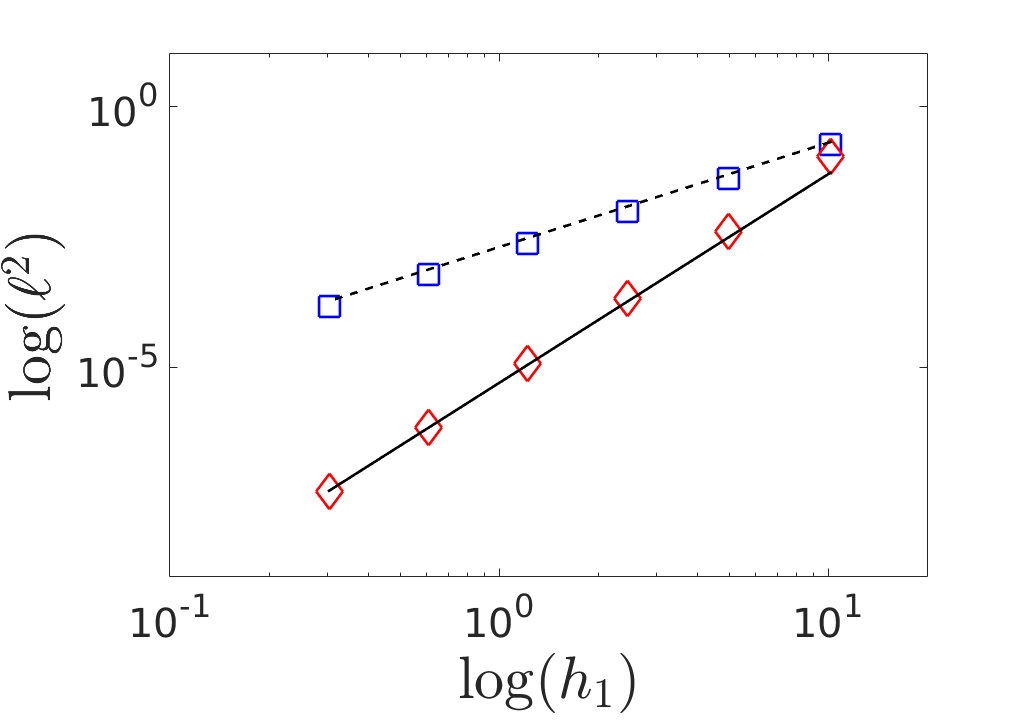} \\
\includegraphics[width=.325\textwidth]{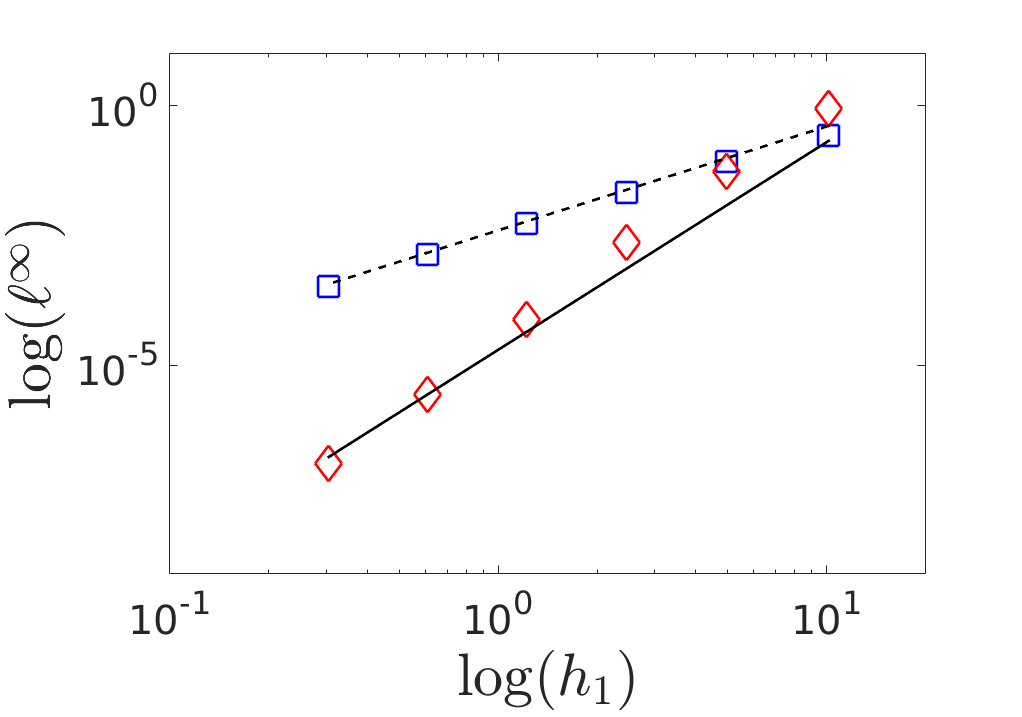} \hfill
\includegraphics[width=.325\textwidth]{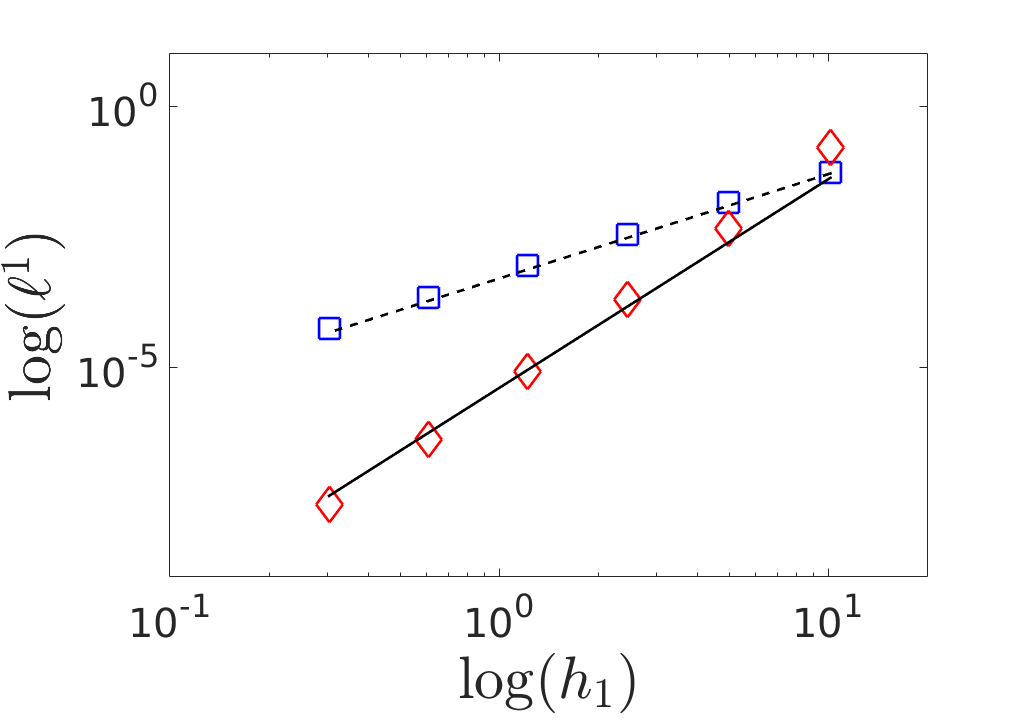} \hfill
\includegraphics[width=.325\textwidth]{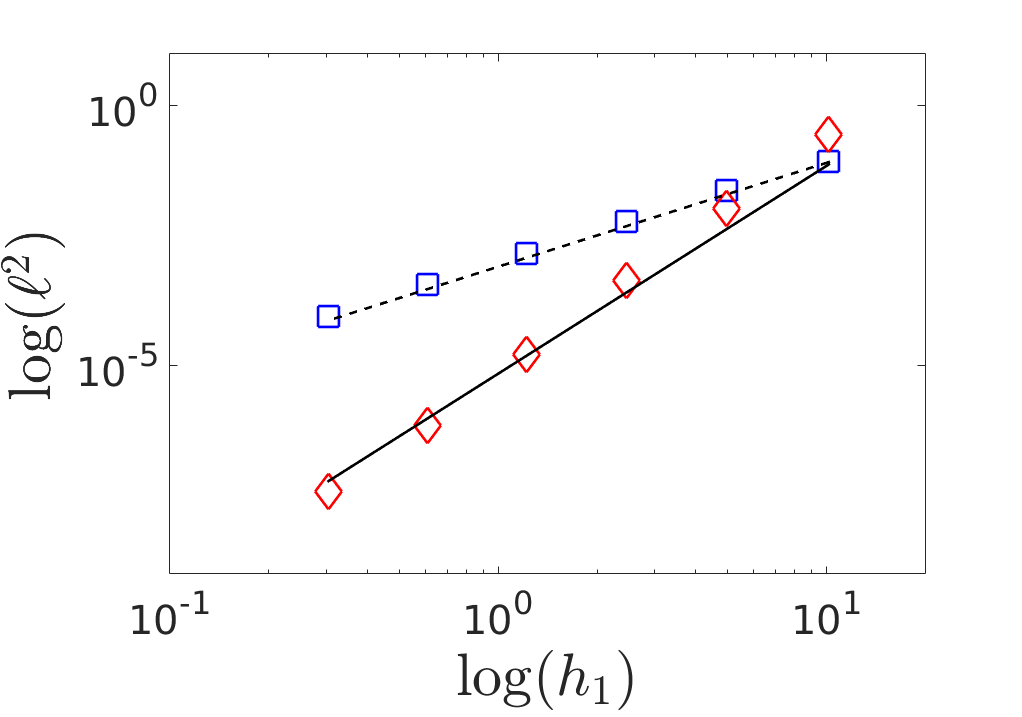}
\caption{Convergence rates for the two-dimensional surface plasmon polariton problem at an interface between vacuum and Drude silver medium with $\omega = 600$ THz (see Figure \ref{fig:plasmon_fields_1}). Here, the bounds \eqref{eq:RC2_time_step} and \eqref{eq:RC4_time_step}-\eqref{eq:RC4_GammaBound} were used to choose an appropriate time step which guarantees stability. From left to right, the top row shows $L_{\infty}$, $L_1$, and $L_2$ norms for the $E_x$ field. The corresponding $E_y$ rates are shown in the bottom row of plots. }
\label{fig:plasmon_rates}
\end{figure}

In the left domain $\Omega_1$, we consider vacuum with $\omega_{p,1} = \gamma_1 = 0$ and $\epsilon_{r,1} = \mu_1 = 1$. In the right domain $\Omega_2$, we again consider silver with $\epsilon_{r,2} = 5$, $\mu_2 = 1$, $\omega_{p,2} = 8.9$ eV, and $\gamma_2 = (17)^{-1}$ THz \cite{yang_2015}. The electric fields and their errors for temporal frequency $600$ THz are shown for the second- and fourth-order accurate schemes in Figure \ref{fig:plasmon_fields_1}. The surface plasmon polariton is localized to the interface at $x_{\text{mid}} = 0$ and propagates in the positive $y$ direction. Note again that the errors are smooth in each domain. The convergence rates with respect to the grid spacing $h_1$ for both the second- and fourth-order schemes are shown in Figure \ref{fig:plasmon_rates} using the $L_1, L_2$, and $L_{\infty}$ norms defined in \eqref{eqn:normdefs}, and again illustrate the expected order of convergence for all cases.

\section{Conclusions}

We have presented a scheme for the solution of the time-dependent Maxwell's equations in piecewise constant linear dispersive media where materials may be discontinuous across interfaces. Maxwell's equations are solved as a second-order wave equation with a time-history convolution term. The modified equation (Taylor) time-stepping approach is used to advance the electric field. The traditional recursive convolution approach is adapted to discretize the convolution term to high-order accuracy in time. This method may be used to generate a scheme which is of any even order of accuracy in time and space. The number of levels required by the full time-stepping scheme increases with the order of accuracy of the scheme, with the second-order and fourth-order schemes respectively requiring three and five levels. 

Second- and fourth-order accurate examples of this scheme, and sufficient conditions for their stability, were presented for the Drude dispersion model. In particular, depending on the boundary conditions implemented, the second-order scheme may admit weak exponential growth but remains conditionally stable based on a time-step restriction. Since this result is dependent on the treatment of the recursive convolution, the possibility of weak exponential growth may be removed by replacing the second-order recursive convolution update step with the fourth-order update. During the time-step, the fields are first advanced independently in each material domain. Coupling between the two domains was achieved through a discretization of the physical interface jump conditions in addition to extra numerical compatibility conditions which are derived to be consistent with the PDE and interface conditions. These coupling conditions are imposed after each interior update using fictitious (ghost) cells on the either side of the interface.

The second- and fourth-order versions of the scheme were implemented in cartesian coordinates for one and two spatial dimensions. Numerical results using the second-order accurate scheme for a 1D periodic wave were presented to illustrate the presence of the exponentially growing modes as well as numerical convergence. These results were also replicated with the fourth-order scheme to illustrate the absence of the exponentially growing mode along with the higher rate of convergence. Numerical results in two spatial dimensions were presented for two exact analytical solutions at a discontinuous material interface between a dielectric and a Drude metallic medium. These solutions were used to verify the accuracy of the schemes, as well as the numerical compatibility conditions.

The schemes presented here are amenable to extension to curvilinear grids and more complex material geometries via overlapping grids. This extension is left for future work. An additional avenue for future work is a similar investigation of the auxiliary differential equation (ADE) treatment of material dispersion, and in particular high-order accurate extensions in the presence of material jumps.

\section{Appendix}

\subsection{Scaling of the Governing Equations}

In this section, we summarize the rescaling of physical parameters required to obtain reasonable numerical parameters for computation. Consider the governing wave equation with a time-history term \eqref{eqn:waveE1} for the Drude model function $\eta(\tau)$ given in table \ref{table:etas}:
\begin{align}
&  \partial_t^2 \mathbf{E} = \frac{1}{\epsilon_0 \epsilon_{r} \mu } \Delta \mathbf{E} - \frac{ \omega_{p}^2 }{\epsilon_r} \mathbf{E} + \frac{   \omega_{p}^2  \gamma }{\epsilon_r} \int_0^{\infty} e^{- \gamma \tau }  \mathbf{E}(t - \tau) d \tau . \label{eqn:wavescale}
\end{align}
Let $\mu  \equiv \mu_0 \mu_r$,
\begin{align}
\widetilde{t} \equiv C_t \ t , \qquad  \widetilde{\mathbf{x}} \equiv \frac{ \mathbf{x} }{ C_t \sqrt{ \epsilon_0 \mu_0}}  , \qquad \widetilde{\mathbf{E}}( \widetilde{ \mathbf{x}}, \widetilde{t} ) \equiv \mathbf{E}(\mathbf{x},t), \qquad \widetilde{\omega}_p = \frac{\omega_p}{C_t} , \qquad {\text{and}} \qquad \widetilde{\gamma} =  \frac{ \gamma }{ C_t} .  
\end{align}
Equation \eqref{eqn:wavescale} becomes
\begin{align}
&  \partial_{\widetilde{t}}^2 \widetilde{\mathbf{E}} =  \frac{1} { \epsilon_r \mu_r } \Delta \widetilde{\mathbf{E}} - \frac{ \widetilde{\omega}_p^2 }{ \epsilon_r  } \widetilde{\mathbf{E}} + \frac{  \widetilde{\omega}_p^2 \widetilde{\gamma} }{\epsilon_r} \int_0^{\infty} e^{- \widetilde{\gamma} \tau }  \widetilde{\mathbf{E}}( \widetilde{t}  - \tau) d \tau . \label{eqn:wavescale2}
\end{align}
Here $c_0 = 1/\sqrt{\epsilon_0 \mu_0} = 2.99792458 \times10^8$ m/s is the speed of light in vacuum. Parameters $\omega_p$ and $\gamma$ are typically on the order of $1$ PHz and $10$ THz respectively  \cite{blaber_2009, olmon_2012, yang_2015}, which implies that a choice of $C_t \sim 10^{16}$ is a practical choice. Under this scaling, one obtains typical numerical parameters $\widetilde{\omega}_p \sim 10^{-1}$ and $\widetilde{\gamma} \sim 10^{-3}$. 

\subsection{Derivation of the Surface Plasmon Polariton Solution}
\label{app:polariton}

In this section, we derive the exact solution to Maxwell's equations which we use in section \ref{sec:polariton}. Consider an interface in two spatial dimensions $\mathbf{x} = (x,y)$ between a non-dispersive dielectric and a Drude dispersive medium located at the plane $x = x_{\rm mid}$. The material in the first domain, $\Omega_1 \equiv \{ \mathbf{x} = (x,y) : x \leq x_{\rm mid} \} $, has frequency domain material parameters $\epsilon_{r,1}, \mu_1, \omega_{p,1} = 0,$ and $ \gamma_1 = 0$, while the Drude material in the second domain, $\Omega_2 \equiv \{  \mathbf{x} = (x,y) : x \geq x_{\rm mid} \} $, has frequency domain material parameters $\epsilon_{r,2}, \mu_2, \omega_{p,2},$ and $ \gamma_2$. Following \cite{maier_2007}, we seek the TM (transverse-magnetic) surface plasmon polariton $(E_x, E_y, H_z)$, a mode localized to ({\it i.e.}, evanescent away from) the interface at $x = x_{\rm mid}$ in the $x$ direction and propagating in the $y$ direction. Note that due to the dissipation of the Drude material, the modes will also decay in the direction of propagation. In our second-order wave equation formulation given by \eqref{eqn:waveE1}, we solve the following equations respectively in domains $\Omega_1$ and $\Omega_2$:
\begin{subequations}
\begin{align}
& \partial_t^2 \mathbf{E}_1 = \frac{1}{\epsilon_0 \epsilon_{r,1} \mu_1} \Delta \mathbf{E}_1, \qquad \mathbf{x} \in \Omega_1, \label{eqn:wave_dom1} \\
&  \partial_t^2 \mathbf{E}_2 = \frac{1}{\epsilon_0 \epsilon_{r,2} \mu_2 } \Delta \mathbf{E}_2 - \frac{ \omega_{p,2}^2 }{\epsilon_r} \mathbf{E}_2 +
 \frac{   \omega_{p,2}^2  \gamma_2 }{\epsilon_r} \int_0^{\infty} e^{- \gamma_2 \tau }  \mathbf{E}_2(t - \tau) d \tau , \qquad \mathbf{x} \in \Omega_2. \label{eqn:wave_dom2}
\end{align}
\end{subequations}
These equations may alternately be written in the frequency domain as
\begin{subequations}
\begin{align}
& - \omega^2 \mu_1 \widehat{\epsilon_1}(\omega) \widehat{\mathbf{E}}_1 =  \Delta \widehat{\mathbf{E}}_1, \qquad \mathbf{x} \in \Omega_1, \label{eqn:wave_dom1_2} \\
& - \omega^2 \mu_2 \widehat{\epsilon_2}(\omega) \widehat{\mathbf{E}}_2 =  \Delta \widehat{\mathbf{E}}_2, \qquad \mathbf{x} \in \Omega_2, \label{eqn:wave_dom2_2} 
\end{align}
\end{subequations}
where
\begin{align}
\widehat{\epsilon_1}(\omega) = \epsilon_0 \epsilon_{r,1} , \qquad \qquad \widehat{\epsilon_2}(\omega) = \epsilon_0 \left( \epsilon_{r,1} - \frac{\omega_{p,2}^2}{\omega (i \gamma_2 - \omega) } \right). 
\end{align}
In addition, the divergence condition \eqref{eqn:macro2} in each domain gives
%\begin{align}
%& \partial_t H_z = - \frac{1}{\mu_1} (\nabla \times E)_z = - \frac{1}{\mu_1} \left( \partial_x E_y - \partial_y E_x \right), \qquad x \in  \Omega_1, \\
%& \partial_t H_z = - \frac{1}{\mu_2} (\nabla \times E)_z = - \frac{1}{\mu_2} \left( \partial_x E_y - \partial_y E_x \right), \qquad x \in  \Omega_2,
%\end{align}
%and
\begin{subequations}
\begin{align}
& \nabla \cdot \mathbf{E}_1  = \partial_x E_{x,1} + \partial_y E_{y,1} = 0, \qquad \mathbf{x} \in \Omega_1 , \label{eqn:tez1gauss} \\
& \nabla \cdot \mathbf{E}_2  = \partial_x E_{x,2} + \partial_y E_{y,2} = 0, \qquad \mathbf{x} \in \Omega_2 . \label{eqn:tez2gauss}
\end{align}
 \label{eqn:tezgauss}
\end{subequations}
Finally, the interface conditions \eqref{eqn:jumps2} at the plane $x = x_{ \text{mid}}$ yield
\begin{align}
\left[ \mathbf{n} \cdot \mathbf{D} \right]_{\mathcal{I}} = 0, \qquad \left[ H_z \right]_{\mathcal{I}} = 0, \qquad \left[ \mathbf{n} \times \mathbf{E} \right]_{\mathcal{I}} = 0. 
\end{align}

We assume the ansatz
%\begin{align}
%E_{x,1} = A_{x,1} \ e^{\alpha_1 x} e^{i \beta y} e^{i \omega t} , \\
%E_{y,1} = A_{y,1} \ e^{\alpha_1 x} e^{i \beta y} e^{i \omega t} , \\
%H_{z,1} = A_{z,1} \  e^{\alpha_1 x} e^{i \beta y} e^{i \omega t} ,
%\end{align}
\begin{subequations}
\begin{align}
E_{x,1} = A_{x,1} \ e^{\alpha_1 x} e^{i \beta y} e^{i \omega t} , \\
E_{y,1} = A_{y,1} \ e^{\alpha_1 x} e^{i \beta y} e^{i \omega t} , 
\end{align}
 \label{eqn:plasmon1}
\end{subequations}
for the first domain $\mathbf{x} \in \Omega_1$ and 
\begin{subequations}
\begin{align}
E_{x,2} = A_{x,2} \ e^{\alpha_2 x} e^{i \beta y} e^{i \omega t} , \\
E_{y,2}  = A_{y,2} \  e^{\alpha_2 x} e^{i \beta y} e^{i \omega t} ,
\end{align}
 \label{eqn:plasmon2}
\end{subequations}
for the second domain $\mathbf{x} \in \Omega_2$. Equations \eqref{eqn:tez1gauss} and \eqref{eqn:tez2gauss} respectively give
\begin{align}
A_{y,1} = \frac{ i \alpha_1 }{\beta} A_{x,1}, \qquad A_{y,2} = \frac{ i \alpha_2 }{\beta} A_{x,2}.  \label{eqn:AyfromAx}
\end{align}
Similarly, Equations \eqref{eqn:wave_dom1_2} and \eqref{eqn:wave_dom2_2} become
\begin{subequations}
\begin{align}
&& - \omega^2 \mu_1  \widehat{\epsilon_1}(\omega) & = \left( \alpha_1^2 - \beta^2 \right) ,  && \\
&& - \omega^2 \mu_2 \widehat{\epsilon_2}(\omega) & = \left( \alpha_2^2 - \beta^2 \right).&&
\end{align}
\label{eqn:goal1}
\end{subequations}
Next, the interface conditions, $\left[ \mathbf{n} \times \mathbf{E} \right]_{\mathcal{I}} = \left[ E_y \right]_{\mathcal{I}} = 0$ along with \eqref{eqn:AyfromAx} give
\begin{align}
A_{y,1}  = \frac{ i \alpha_1}{\beta} A_{x,1} = A_{y,2} =   \frac{ i \alpha_2}{\beta}  A_{x,2} \qquad \Longrightarrow \qquad \alpha_1 A_{x,1} = \alpha_2 A_{x,2}. \label{eqn:Axs1}
\end{align}
In the frequency-domain, the interface condition $\left[ \mathbf{n} \cdot \mathbf{D} \right]_{\mathcal{I}} =  \left[ D_x \right]_{\mathcal{I}} = 0 $ becomes
\begin{align}
\left[ \mathbf{n} \cdot \widehat{\epsilon}(\omega) \widehat{\mathbf{E}} \right]_{\mathcal{I}} = \left[ \widehat{\epsilon}(\omega) \widehat{E}_x \right]_{\mathcal{I}} =  0,
\end{align}
from whence
\begin{align}
\widehat{\epsilon_1}(\omega) A_{x,1} =   \widehat{\epsilon_2}(\omega) A_{x,2} . \label{eqn:Axs2}
\end{align}
Equations \eqref{eqn:Axs1} and \eqref{eqn:Axs2} now give
\begin{align}
\frac{ \widehat{\epsilon_2}(\omega) }{ \widehat{\epsilon_1}(\omega) } = \frac{\alpha_2}{\alpha_1} . \label{eqn:goal2}
\end{align}
Combining \eqref{eqn:goal1} and \eqref{eqn:goal2} gives the dispersion relation
\begin{align}
\beta =  \omega \left( \frac{ \widehat{\epsilon_1}(\omega) \widehat{\epsilon_2}(\omega) \left[ \mu_1 \widehat{\epsilon_2}(\omega) - \mu_2 \widehat{\epsilon_1}(\omega) \right] }{ \widehat{\epsilon_2}(\omega)^2 - \widehat{\epsilon_1}(\omega)^2 } \right)^{1/2} . 
\end{align}
In addition, we obtain
\begin{subequations}
\begin{align}
& \alpha_1 = \left(  \beta^2 - \omega^2 \mu_1 \widehat{\epsilon_1}(\omega) \right)^{1/2} =  \omega \widehat{\epsilon_1}(\omega) \left(  - \frac{ \left[ \mu_2 \widehat{\epsilon_2}(\omega) - \mu_1 \widehat{\epsilon_1}(\omega) \right] }{ \widehat{\epsilon_2}(\omega)^2 - \widehat{\epsilon_1}(\omega)^2 } \right)^{1/2} , \\
& \alpha_2 =  \left(  \beta^2 - \omega^2 \mu_2 \widehat{\epsilon_2}(\omega) \right)^{1/2} = \omega \widehat{\epsilon_2}(\omega) \left(  - \frac{ \left[ \mu_2 \widehat{\epsilon_2}(\omega) - \mu_1 \widehat{\epsilon_1}(\omega) \right] }{ \widehat{\epsilon_2}(\omega)^2 - \widehat{\epsilon_1}(\omega)^2 } \right)^{1/2}  = \frac{\alpha_1 \widehat{\epsilon_2}(\omega)}{\widehat{\epsilon_1}(\omega)} . 
\end{align}
\end{subequations}
Finally, the solutions given by \eqref{eqn:plasmon1} and \eqref{eqn:plasmon2} along with \eqref{eqn:AyfromAx} are simply
\begin{subequations}
\begin{align}
& E_{x,1} = A_{x,1} \ e^{\alpha_1 x} e^{i \beta y} e^{i \omega t} , \\
& E_{y,1} =  \frac{i \alpha_1}{\beta}  \ A_{x,1} \ e^{\alpha_1 x} e^{i \beta y} e^{i \omega t} , \\
& E_{x,2} = \frac{\widehat{\epsilon_1}(\omega)}{\widehat{\epsilon_2}(\omega)} \ A_{x,1}  \  e^{\alpha_2 x} \  e^{i \beta y} e^{i \omega t} , \\
& E_{y,2} = \frac{i \alpha_1}{\beta} \ A_{x,1} \ e^{\alpha_2 x} e^{i \beta y} e^{i \omega t} . 
\end{align}
\end{subequations}

\newpage
\bibliography{DMX_RC}
\bibliographystyle{siamplain}

\end{document}